\newtheorem{theorem}{\bf Theorem}
\newtheorem{lem}{\bf Lemma}[section]
\newtheorem{cor}{\bf Corollary}
\newtheorem{remark}{\rm REMARK}[section]
\newtheorem{prop}{\bf Proposition}[section]
\newcommand{\one}{\mathrm{I}}
\newcommand{\two}{\mathrm{I}\hspace{-1.2pt}\mathrm{I}}
\newcommand{\three}{\mathrm{I}\hspace{-1.2pt}\mathrm{I}\hspace{-1.2pt}\mathrm{I}}
\newcommand{\four}{\mathrm{I}\hspace{-1.2pt}\mathrm{V}}
\newcommand{\five}{\mathrm{V}}
\newcommand{\six}{\mathrm{V}\hspace{-1.2pt}\mathrm{I}}
\newcommand{\seven}{\mathrm{V}\hspace{-1.2pt}\mathrm{I}\hspace{-1.2pt}\mathrm{I}}
\newcommand{\eight}{\mathrm{V}\hspace{-1.2pt}\mathrm{I}\hspace{-1.2pt}\mathrm{I}\hspace{-1.2pt}\mathrm{I}}
\title{{\huge Construction and sample path properties of diffusion house-moving between two curves}}
\author{{\large Kensuke Ishitani and Soma Nishino}}
\date{}
\begin{document}
\maketitle

\begin{abstract}
The purpose of this paper is to introduce the construction of a stochastic process called ``diffusion house-moving'' and to explore its properties. 
We study the weak convergence of diffusion bridges conditioned to stay between two curves, and we refer to this limit as diffusion house-moving. 
Applying this weak convergence result, we give the sample path properties of diffusion house-moving.

\bigskip
\noindent {\bf Keywords:}
Brownian bridge, Brownian house-moving, Diffusion bridge, Diffusion house-moving
\footnote[0]{2020 Mathematics Subject Classification: 
Primary 60F17; Secondary 60J60.}
\end{abstract}


\section{Introduction}

Recently, \cite{ishitani_nishino} developed higher-order integration-by-parts formulae for Wiener measures on a path space 
between two curves with respect to pinned/ordinary Wiener measures.
In this integration-by-parts formulae, three dimensional Bessel bridge, Brownian meander and Brownian house-moving played an important role.
The Brownian house-moving is defined as Brownian bridge conditioned to stay between two curves~\cite{ishitani_hatakenaka_suzuki}.
Furthermore, we are currently investigating higher-order chain rules for computing higher-order Greeks for barrier options, and we expect Brownian house-moving to play an important role in their computation~\cite{ishitani}.
For computing higher-order Greeks under the general market model, we need more general results for the weak convergence of conditioned diffusion bridges. 

We begin with the one-dimensional diffusion $X = \{ X(t) \}_{t \in [0,1]}$ satisfying the stochastic differential equation
\begin{equation}\nonumber
    \mathrm{d} X(t) = \mu(X(t)) \mathrm{d} t + \mathrm{d} W(t), \ X(0) = 0 ,
\end{equation}
where $\mu \in C^1(\mathbb{R}, \mathbb{R})$ and $W = \{ W(t) \}_{t \in [0,1]}$ is standard one-dimensional Brownian motion.
Let $g^-$ and $g^+$ be $\mathbb{R}$-valued $C^2$-functions defined on $[0,1]$ that satisfy
\begin{equation}\nonumber
    \min_{0 \leq t \leq 1} \left( g^+(t) - g^-(t) \right) > 0 , \quad g^-(0) = 0.
\end{equation}
We define
\begin{align*}
&K(g^- - \varepsilon, g^+ + \varepsilon) \\
&:= \{ w \in C([0,1], \mathbb{R}) \mid g^-(t) - \varepsilon \leq w(t) \leq g^+(t) + \varepsilon ,\ 0 \leq t \leq 1 \} .
\end{align*}
For $X$-bridge $X^{0 \to b}$ starting from $0$ to $b$ on the time interval $[0,1]$, let $X^{0 \to b}\mid_{K(g^- - \eta^-(\varepsilon), g^+ + \eta^+(\varepsilon))}$ denote the conditioned process.
In this paper, we consider the weak convergence of
\begin{equation}\nonumber
    X^{0 \to b}\mid_{K(g^- - \eta^-(\varepsilon), g^+ + \eta^+(\varepsilon))} \quad \textrm{as} \quad \varepsilon \downarrow 0 .
\end{equation}
In \cite{ishitani_hatakenaka_suzuki}, the weak convergence of
\begin{equation}\nonumber
    W^{0 \to b} \mid_{K(g^- - \eta^-(\varepsilon), g^+ + \eta^+(\varepsilon))} \quad \textrm{as} \quad \varepsilon \downarrow 0 
\end{equation}
has been considered, where $W^{0 \to b}\mid_{K(g^- - \eta^-(\varepsilon), g^+ + \eta^+(\varepsilon))}$ is the conditioned Brownian bridge starting from $0$ to $b$ on the time interval $[0,1]$.
Our result is a generalization of the previous work in \cite{ishitani_hatakenaka_suzuki}.

The remainder of this paper is organized as follows. In Subsection $1.1$, we present the notation used in this study. 
Subsection $1.2$ states the main results of this study.
In Section $2$, we prepare some useful results for the proof of main results.
In Subsection $3.1$, we prove Theorem \ref{dif_hm}, which gives the construction of the diffusion house-moving as the weak limit of conditioned diffusion bridges.
Subsection $3.2$ is devoted to proving the sample path properties of diffusion house-moving (Corollaries \ref{cor1}, \ref{cor2}, \ref{cor3}, and \ref{cor4}).
In Section $4$, we prove the decomposition formula for the diffusion house-moving (Theorem \ref{thm2}).
In Section $5$, we construct the diffusion meander between two curves (Proposition \ref{dif_mea}).
Subsection $6.1$ is devoted to proving the absolute continuity of the distribution of the diffusion house-moving with respect to the diffusion meander between two curves (Theorem \ref{hm_mea}).
In Subsection $6.2$, we prove Corollary \ref{abscont}, which compares the two kinds of absolute continuity obtained Corollary \ref{cor3} and Theorem \ref{hm_mea}.
Section $7$ is devoted to proving the regularity of the sample path of the diffusion house-moving (Proposition \ref{hoelder}).

\section{Notation}

For $0 \leq s < t < \infty$, let $C([s,t], \mathbb{R})$ be the class of $\mathbb{R}$-valued continuous functions defined on $[s,t]$.
Let
\begin{equation}\nonumber
d_{\infty}(w_1,w_2) := \sup_{u \in [s,t]} |w_1(u) - w_2(u)|, \quad w_1, w_2 \in C([s,t], \mathbb{R}) .
\end{equation}
$\mathcal{B}(C([s,t], \mathbb{R}))$ denotes the Borel $\sigma$-algebra with respect to the topology generated by the metric $d_{\infty}$.
In addition, for $0 \leq s \leq t \leq 1$, $\pi_{[s,t]} \colon C([0,1], \mathbb{R}) \to C([s,t], \mathbb{R})$ denotes the restriction map.

Assume that $Y ~ \colon ~ (\Omega, \mathcal{F}, P) \to (C([0,1], \mathbb{R}), \mathcal{B}(C([0,1], \mathbb{R})))$ is a random variable and that $\Lambda \in \mathcal{B}(C([0,1], \mathbb{R}))$ satisfies $P(Y \in \Lambda) > 0$.
Then, we define the probability measure $P_{Y^{-1}(\Lambda)}$ on $(Y^{-1}(\Lambda), Y^{-1}(\Lambda) \cap \mathcal{F})$ as
\begin{align*}
&P_{Y^{-1}(\Lambda)}(A) := \frac{P(A)}{P(Y \in \Lambda)}, \ 
A \in Y^{-1}(\Lambda) \cap \mathcal{F} := \{ Y^{-1}(\Lambda) \cap F \mid F \in \mathcal{F} \} .
\end{align*}
Let $Y\vert_{\Lambda}$ denote the restriction $Y$ to $(Y^{-1}(\Lambda), Y^{-1}(\Lambda) \cap \mathcal{F}, P_{Y^{-1}(\Lambda)})$. Then,
\begin{equation}\nonumber
Y\vert_{\Lambda} ~ \colon ~ (Y^{-1}(\Lambda), Y^{-1}(\Lambda) \cap \mathcal{F}, P_{Y^{-1}(\Lambda)}) \to (\Lambda, \mathcal{B}(\Lambda))
\end{equation}
is a random variable.
In this study, we write $P_{Y^{-1}(\Lambda)}(Y\vert_{\Lambda} \in \Gamma)$ as $P(Y\vert_{\Lambda} \in \Gamma)$ and $E^{P_{Y^{-1}(\Lambda)}}[f(Y\vert_{\Lambda})]$ as $E[f(Y\vert_{\Lambda})]$.

For $s > 0$, we define
\begin{align*}
n_s(z) := \frac{1}{\sqrt{2\pi s}} \exp \left( -\frac{z^2}{2s} \right) , \quad z \in \mathbb{R} .
\end{align*}

$X_n \overset{\mathcal{D}}{\to} X$ denotes the convergence in distribution 
of the sequence of random variables $\{ X_n \}_{n=1}^{\infty}$ to the random variable $X$.  
In addition, we write $X \overset{\mathcal{D}}{=} Y$ 
for random variables $X, Y$ that follow the same distribution.

Let $0 \leq t_1 < t_2 \leq 1$. 
Throughout this study, we use the following notation. 

For $f, g \in C([0,1], \mathbb{R})$, we define
\begin{align*}
& K_{[t_1,t_2]}(f,g) := \left\{ w \in C([t_1,t_2], \mathbb{R}) \mid f(t) \leq w(t) \leq g(t) , t_1 \leq t \leq t_2 \right\} , \\
& K_{[t_1,t_2]}^{-}(g) := \bigcup_{n=1}^{\infty} K_{[t_1,t_2]}(-n, g) , \qquad
K(f,g) := K_{[0,1]}(f,g) .
\end{align*}

$W = \{ W(t) \}_{t \geq 0}$, 
$W^{a \to b} = \{ W^{a \to b}(t) \}_{t \in [0,1]}$ $(a, b \in \mathbb{R})$, 
$W^+ = \{ W^+(t) \}_{t \in [0,1]}$, 
and $R^{c \to d} = \{ R^{c \to d}(t) \}_{t \in [0,1]}$ $(c, d \geq 0)$ denote 
standard one-dimensional Brownian motion, 
one-dimensional Brownian bridge from $a$ to $b$ on the time interval $[0,1]$, 
Brownian meander on the time interval $[0,1]$, and 
BES($3$)-bridge from $c$ to $d$ on the time interval $[0,1]$ 
defined on some probability space, respectively.  
For $a, b \in \mathbb{R}$ and $c, d \geq 0$, 
$W_{[t_1,t_2]}$, $W_{[t_1,t_2]}^{a \to b}$, $W_{[t_1,t_2]}^+$ and $R_{[t_1,t_2]}^{c \to d}$ denote 
one-dimensional Brownian motion, 
one-dimensional Brownian bridge from $a$ to $b$, 
Brownian meander, and 
BES($3$)-bridge from $c$ to $d$ defined on $[t_1,t_2]$, respectively. 
Laws of $W_{[t_1,t_2]}$, $W_{[t_1,t_2]}^{a \to b}$, $W_{[t_1,t_2]}^+$, and $R_{[t_1,t_2]}^{c \to d}$ are given by
\begin{align*}
&\{ W_{[t_1,t_2]}(u) \}_{u \in [t_1, t_2]}
\overset{\mathcal{D}}{=} 
\{ W(u-t_1) \}_{u \in [t_1, t_2]}, \\
&\left\{ W_{[t_1,t_2]}^{a \to b}(u) \right\}_{u \in [t_1, t_2]}
\overset{\mathcal{D}}{=}
\left\{ \sqrt{t_2-t_1}W^{\frac{a}{\sqrt{t_2-t_1}} \to \frac{b}{\sqrt{t_2-t_1}}}\left(\frac{u - t_1}{t_2-t_1}\right) \right\}_{u \in [t_1, t_2]}, \\
&\left\{ W_{[t_1,t_2]}^+(u) \right\}_{u \in [t_1, t_2]}
\overset{\mathcal{D}}{=}
\left\{ \sqrt{t_2-t_1}W^+\left(\frac{u - t_1}{t_2-t_1}\right) \right\}_{u \in [t_1, t_2]}, \\
&\left\{ R_{[t_1,t_2]}^{c \to d}(u) \right\}_{u \in [t_1, t_2]}
\overset{\mathcal{D}}{=}
\left\{ \sqrt{t_2-t_1}R^{\frac{c}{\sqrt{t_2-t_1}} \to \frac{d}{\sqrt{t_2-t_1}}}\left(\frac{u - t_1}{t_2-t_1}\right) \right\}_{u \in [t_1, t_2]}.
\end{align*}
Further, 
$W^a = \{ W^a(t) \}_{t \geq 0}$ $(a\in \mathbb{R})$ denotes one-dimensional Brownian motion starting from $a$. 
$W_{[t_1,t_2]}^a$ $(a\in \mathbb{R})$ denotes one-dimensional Brownian motion starting from $a$ on the time interval $[t_1,t_2]$. 
Laws of $W^a$ and $W_{[t_1,t_2]}^a$ are given by
\begin{align*}
&\{ W^a(t) \}_{t \geq 0}
\overset{\mathcal{D}}{=} 
\{ a+W(t) \}_{t \geq 0} , \\
&\{ W_{[t_1,t_2]}^a(u) \}_{u \in [t_1, t_2]}
\overset{\mathcal{D}}{=} 
\{ a+W(u-t_1) \}_{u \in [t_1, t_2]}.
\end{align*}

For an $\mathbb{R}$-valued continuous process $X$ on $[t_1,t_2]$ and $\mathbb{R}$-valued $C^2$-function $g$ defined on $[t_1,t_2]$, we define
\begin{align*}
&Z_{[t_1,t_2]}^{g}(X) \\
&:= \exp \left\{ g'(t_2)X(t_2) - g'(t_1)X(t_1) - \int_{t_1}^{t_2} X(u) g''(u) \ \mathrm{d} u - \frac{1}{2} \int_{t_1}^{t_2} g'(u) ^2 \ \mathrm{d} u \right\} ,\\
&\widetilde{Z}_{[t_1,t_2]}^{g}(X) := Z_{[t_1,t_2]}^{g}(X + g) \\
&\ =\exp \left\{ g'(t_2)X(t_2) - g'(t_1)X(t_1) - \int_{t_1}^{t_2} X(u) g''(u) \ \mathrm{d} u + \frac{1}{2} \int_{t_1}^{t_2} g'(u) ^2 \ \mathrm{d} u \right\} .
\end{align*}
For $f \in C([t_1,t_2], \mathbb{R})$, we define $\overleftarrow{f} \in C([t_1,t_2], \mathbb{R})$ as
\begin{align*}
\overleftarrow{f}(t) := f(t_1 + t_2 - t), \quad t_1 \leq t \leq t_2 .
\end{align*}

Let $0 \leq t_0 < t_1 < t_2 < t_3 < \infty$.
For $w_{[t_0,t_1]}^{(1)} \in C([t_0,t_1], \mathbb{R})$ and $w_{[t_1,t_2]}^{(2)} \in C([t_1,t_2], \mathbb{R})$, we set
\begin{equation}\nonumber
(w_{[t_0,t_1]}^{(1)} \oplus w_{[t_1,t_2]}^{(2)} )(t) :=
\begin{cases}
w_{[t_0,t_1]}^{(1)}(t), & t_0 \leq t < t_1 , \\
w_{[t_1,t_2]}^{(2)}(t), & t_1 \leq t \leq t_2 .
\end{cases}
\end{equation}
Similarly, for $w_{[t_{i-1},t_{i}]}^{(i)} \in C([t_{i-1},t_{i}], \mathbb{R})$ ($i = 1, 2, 3$), we set
\begin{equation}\nonumber
(w_{[t_0,t_1]}^{(1)} \oplus w_{[t_1,t_2]}^{(2)} \oplus w_{[t_2,t_3]}^{(3)} )(t) :=
\begin{cases}
w_{[t_0,t_1]}^{(1)}(t), & t_0 \leq t < t_1 , \\
w_{[t_1,t_2]}^{(2)}(t), & t_1 \leq t < t_2 , \\
w_{[t_2,t_3]}^{(3)}(t), & t_2 \leq t \leq t_3 .
\end{cases}
\end{equation}

\section{Main results}

Let $g^-$ and $g^+$ be $\mathbb{R}$-valued $C^2$-functions defined on $[0,1]$ that satisfy
\begin{align*}
\min_{0 \leq t \leq 1} \left( g^+(t) - g^-(t) \right) > 0.
\end{align*}

Let $0 \leq t_1 < t_2 \leq 1$. 
For $g^-(t_1) \leq \alpha \leq g^+(t_1)$ and $g^-(t_2) \leq \beta \leq g^+(t_2)$, 
we define a continuous stochastic process $W_{[t_1,t_2]}^{\alpha,\beta,(g^-,g^+)}$ on $[t_1,t_2]$ as follows:
\begin{description}
    \item[{\rm (i)}] for $g^-(t_1) < \alpha < g^+(t_1)$ and $g^-(t_2) < \beta < g^+(t_2)$, the conditioned process $W_{[t_1,t_2]}^{\alpha \to \beta}\vert_{K_{[t_1,t_2]}(g^-,g^+)}$;
    \item[{\rm (ii)}] for $\alpha = g^{\pm}(t_1)$ and $g^-(t_2) < \beta < g^+(t_2)$, the weak limit of $W_{[t_1,t_2]}^{\alpha \to \beta}\vert_{K_{[t_1,t_2]}(g^- -\varepsilon,g^+ + \varepsilon)}$ as $\varepsilon \downarrow 0$;
    \item[{\rm (iii)}] for $g^-(t_1) < \alpha < g^+(t_1)$ and $\beta = g^{\pm}(t_2)$, the weak limit of $W_{[t_1,t_2]}^{\alpha \to \beta}\vert_{K_{[t_1,t_2]}(g^- -\varepsilon,g^+ + \varepsilon)}$ as $\varepsilon \downarrow 0$;
    \item[{\rm (iv)}] for $\alpha = g^{\pm}(t_1)$ and $\beta = g^{\pm}(t_2)$, the weak limit of $W_{[t_1,t_2]}^{\alpha \to \beta}\vert_{K_{[t_1,t_2]}(g^- -\varepsilon,g^+ + \varepsilon)}$ as $\varepsilon \downarrow 0$;
    \item[{\rm (v)}] for $\alpha = g^{\pm}(t_1)$ and $\beta = g^{\mp}(t_2)$, the weak limit of $W_{[t_1,t_2]}^{\alpha \to \beta}\vert_{K_{[t_1,t_2]}(g^- -\varepsilon,g^+ + \varepsilon)}$ as $\varepsilon \downarrow 0$.
\end{description}
In this definition, in cases (\textrm{ii}) and (\textrm{iii}) the weak limits have the same probability law as three-dimensional Bessel bridge between the curves $g^+$ and $g^-$.
In case (\textrm{iv}), the weak limit is Brownian excursion, and in case (\textrm{v}) it is a continuous Markov process called Brownian house-moving~\cite{ishitani_hatakenaka_suzuki}.

Furthermore, for $g^-(t_1) \leq \alpha \leq g^+(t_1)$, we define a continuous stochastic process $W_{[t_1,t_2]}^{\alpha,(g^-,g^+)}$ on $[t_1,t_2]$ as follows:
\begin{description}
    \item[{\rm (vi)}] for $g^-(t_1) < \alpha < g^+(t_1)$, the conditioned process $(\alpha + W_{[t_1,t_2]})\vert_{K_{[t_1,t_2]}(g^-,g^+)}$;
    \item[{\rm (vii)}] for $\alpha = g^{\pm}(t_1)$, the weak limit of $(\alpha + W_{[t_1,t_2]})\vert_{K_{[t_1,t_2]}(g^- - \varepsilon, g^+ + \varepsilon)}$ as $\varepsilon \downarrow 0$.
\end{description}
In case (\textrm{vii}), the weak limit is called Brownian meander~\cite{ishitani_hatakenaka_suzuki}.

For $0 < t < 1$, $0 \leq t_1 < t_2 \leq 1$ and $y \in (g^-(t), g^+(t))$, $y_i \in (g^-(t_i), g^+(t_i)) ~ (i = 1, 2)$, we define
\begin{align*}
&q_{[0,t]}^{(g^-,g^+),(\uparrow)}(y) := 
E\left[ \widetilde{Z}_{[0,t]}^{g^- - g^-(0)} \left( R_{[0,t]}^{0 \to y - g^-(t)}\mid_{K_{[0,t]}^{-}(g^+ - g^-)} \right)^{-1} \right] \\
&\qquad \times P( R_{[0,t]}^{0 \to y - g^-(t)} \in K_{[0,t]}^{-}(g^+ - g^-) ) 
\frac{ P( W_{[0,t]}^{+}(t) \in \mathrm{d} y - g^-(t) ) }{ \mathrm{d} y }, \\
&q_{[t,1]}^{(g^-,g^+),(\downarrow)}(y) := 
E\left[ \widetilde{Z}_{[t,1]}^{g^+(1) - \overleftarrow{g^+}} \left( R_{[t,1]}^{0 \to g^+(t) - y}\mid_{K_{[t,1]}^{-}(\overleftarrow{g^+} - \overleftarrow{g^-})} \right)^{-1} \right] \\
&\qquad \times P( R_{[t,1]}^{0 \to g^+(t) - y} \in K_{[t,1]}^{-}(\overleftarrow{g^+} - \overleftarrow{g^-}) ) 
\frac{ P( W_{[t,1]}^{+}(1) \in g^+(t) - \mathrm{d} y ) }{ \mathrm{d} y } ,\\
&p_{[t_1,t_2]}^{(g^-,g^+)}(y_1,y_2) := \frac{P( W_{[t_1,t_2]}^{y_1} \in K_{[t_1,t_2]}(g^-,g^+) , W_{[t_1,t_2]}^{y_1}(t_2) \in \mathrm{d} y_2 )}{ \mathrm{d} y_2 } .
\end{align*}

Let $\mu$ be an $\mathbb{R}$-valued $C^1$-function defined on $\mathbb{R}$. 

For $a \in \mathbb{R}$, $X^a = \{ X^a(t) \}_{t \geq 0}$ denotes one-dimensional time-homogeneous diffusion process satisfying the following stochastic differential equation:
\begin{align*}
\mathrm{d} X(t) = \mu(X(t)) \mathrm{d} t + \mathrm{d} W(t), \quad X(0) = a .
\end{align*}
For $T > 0$, we write $\{ X^a(t) \}_{0 \leq t \leq T}$ as $X_{[0,T]}^a$. 
For $0 \leq t_1 < t_2 < \infty$ and $c, d \in \mathbb{R}$, 
$X_{[t_1,t_2]}^{c \to d}=\{X_{[t_1,t_2]}^{c \to d}(u)\}_{u\in [t_1,t_2]}$ denotes $X$-bridge from $c$ to $d$ defined on the time interval $[t_1,t_2]$.

Further, we define
\begin{align*}
&N_{[t_1,t_2]}(w) := \int_{t_1}^{t_2} \left\{ \mu'(w(u)) + \mu^2(w(u)) \right\} \ \mathrm{d} u
\qquad (w\in C([t_1,t_2], \mathbb{R})), \\
&G(y) := \int_0^y \mu(z) \ \mathrm{d} z 
\qquad (y\in \mathbb{R}).
\end{align*}

\subsection{Construction and sample path properties of diffusion house-moving}
\label{Subsec_mainresults_diffusionhousemoving}

In this subsection, we define $b:=g^+(1)$ and assume that $g^-(0)=0$. 

Assume that $\{ \eta^-(\varepsilon) \}_{\varepsilon>0}$ and $\{ \eta^+(\varepsilon) \}_{\varepsilon>0}$ satisfy
\begin{align*}
\eta^{\pm}(\varepsilon)>0\quad (\varepsilon>0)
\quad \mbox{and} \quad \eta^{\pm}(\varepsilon) \downarrow 0\quad (\varepsilon \downarrow 0).
\end{align*}

For $0 < t < 1$, $0 < t_1 < t_2 < 1$ and $y \in (g^-(t), g^+(t))$, $y_i \in (g^-(t_i), g^+(t_i)) ~ (i = 1, 2)$, we define
\begin{align*}
&h(t,y) = \left( C_{g^-,g^+} \right)^{-1} \frac{1}{\sqrt{t}} q_{[0,t]}^{(g^-,g^+),(\uparrow)}(y) \frac{1}{\sqrt{1-t}} q_{[t,1]}^{(g^-,g^+),(\downarrow)}(y), \\
&h(t_1,y_1,t_2,y_2) = \frac{ p_{[t_1,t_2]}^{(g^-,g^+)}(y_1,y_2) \frac{1}{\sqrt{1-t_2}} q_{[t_2,1]}^{(g^-,g^+),(\downarrow)}(y_2) }{ \frac{1}{\sqrt{1-t_1}} q_{[t_1,1]}^{(g^-,g^+),(\downarrow)}(y_1) }, 
\end{align*}
where
\begin{align*}
C_{g^-,g^+} := \frac{\pi n_1(b)}{2} \lim_{\varepsilon \downarrow 0} 
\frac{ P( W_{[0,1]}^{0 \to b} \in K_{[0,1]}(g^- - \eta^-(\varepsilon), g^+ + \eta^+(\varepsilon)) ) }{ \eta^-(\varepsilon) \eta^+(\varepsilon) } .
\end{align*}
Here, note that $h(t,y)$ and $h(t_1,y_1,t_2,y_2)$ are transition densities for Brownian house-moving $H^{g^- \to g^+}$~\cite{ishitani_hatakenaka_suzuki}.
Further, for $0 < t < 1$, $0 < t_1 < t_2 < 1$ and $y \in (g^-(t), g^+(t))$, $y_i \in (g^-(t_i), g^+(t_i)) ~ (i = 1, 2)$, we define
\begin{align*}
&h_{\mu}(t,y) = 
\frac{ E[e^{-\frac{1}{2}N_{[0,t]}(W_{[0,t]}^{0,y,(g^-,g^+)})}] E[e^{-\frac{1}{2}N_{[t,1]}(W_{[t,1]}^{y,b,(g^-,g^+)})}] }{ E[ e^{-\frac{1}{2}N_{[0,1]}(H^{g^- \to g^+})} ] } h(t,y), \\
&h_{\mu}(t_1,y_1,t_2,y_2) =
 \frac{ E[e^{-\frac{1}{2}N_{[t_1,t_2]}(W_{[t_1,t_2]}^{y_1,y_2,(g^-,g^+)})}] E[e^{-\frac{1}{2}N_{[t_2,1]}(W_{[t_2,1]}^{y_2,b,(g^-,g^+)})}] }
 {E[ e^{-\frac{1}{2}N_{[t_1,1]}(W_{[t_1,1]}^{y_1,b,(g^-,g^+)})} ]} h(t_1,y_1,t_2,y_2) .
\end{align*}

First, we construct a stochastic process called ``diffusion house-moving'' $H_{\mu}^{g^- \to g^+}$ as the weak limit of diffusion bridges conditioned to stay between two curves.
\begin{theorem}
    \label{dif_hm}
    There exists an $\mathbb{R}$-valued continuous Markov process $H_{\mu}^{g^- \to g^+} = \{ H_{\mu}^{g^- \to g^+}(t) \}_{t \in [0,1]}$ that satisfies
    \begin{align}
        &\quad E[ F(H_{\mu}^{g^- \to g^+}) ] \notag \\
        &= \lim_{\varepsilon \downarrow 0} E[ F( X^{0 \to b}\mid_{K(g^- - \eta^-(\varepsilon), g^+ + \eta^+(\varepsilon))} ) ] \\
        &= \frac{ E[ \widehat{F}(H^{g^- \to g^+}) ] }{ E[ \widehat{1}(H^{g^- \to g^+}) ] } \\
        &= \int_{g^-(t)}^{g^+(t)} \frac{E[ \widehat{F}( W_{[0,t]}^{0,y,(g^-,g^+)} \oplus W_{[t,1]}^{y,b,(g^-,g^+)} ) ]}{E[ \widehat{1}(H^{g^- \to g^+}) ]} h(t,y) \ \mathrm{d} y \label{dif_hm_decomp1} \\
        &= \int_{g^-(t_2)}^{g^+(t_2)} \int_{g^-(t_1)}^{g^+(t_1)} \frac{E[ \widehat{F}( W_{[0,t_1]}^{0,y_1,(g^-,g^+)} \oplus W_{[t_1,t_2]}^{y_1,y_2,(g^-,g^+)} \oplus W_{[t_2,1]}^{y_2,b,(g^-,g^+)} ) ]}{E[ \widehat{1}(H^{g^- \to g^+}) ]} \label{dif_hm_decomp2} \\
        &\quad \qquad \qquad \qquad \qquad \times h(t_1,y_1) h(t_1,y_1,t_2,y_2) \ \mathrm{d} y_1 \mathrm{d} y_2 \notag
    \end{align}
    for every $\mathbb{R}$-valued bounded continuous function $F$ on $C([0,1], \mathbb{R})$, $0 < t < 1$ and $0 < t_1 < t_2 < 1$, where the respective processes that appear in \eqref{dif_hm_decomp1} and \eqref{dif_hm_decomp2} are independent of each other and $\widehat{F}$ is defined as
    \begin{equation}\nonumber
        \widehat{F}(w) := e^{-\frac{1}{2} N_{[0,1]}(w)} F(w), \ w \in C([0,1] , \mathbb{R}) .
    \end{equation}
    Moreover, for $0 < t < 1$, $0 < t_1 < t_2 < 1$, $y \in (g^-(t), g^+(t))$ and $y_i \in (g^-(t_i), g^+(t_i))$~$(i = 1, 2)$, the law of $H_{\mu}^{g^- \to g^+}$ is given by
\begin{align*}
&P(H_{\mu}^{g^- \to g^+}(t) \in \mathrm{d} y) = h_{\mu}(t,y) , \\
&P( H_{\mu}^{g^- \to g^+}(t_2) \in \mathrm{d} y_2  \mid  H_{\mu}^{g^- \to g^+}(t_1) = y_1 ) 
= h_{\mu}(t_1,y_1,t_2,y_2) .
\end{align*}
\end{theorem}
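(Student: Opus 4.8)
The plan is to reduce the conditioned diffusion bridge to the conditioned Brownian bridge by Girsanov's theorem, and then to pass to the Brownian house-moving $H^{g^-\to g^+}$ and exploit its known Markov decomposition. First I would fix $\varepsilon>0$ and abbreviate $K_\varepsilon := K(g^- - \eta^-(\varepsilon), g^+ + \eta^+(\varepsilon))$. Applying Girsanov's theorem to $\mathrm{d} X(t) = \mu(X(t))\,\mathrm{d} t + \mathrm{d} W(t)$ and rewriting the stochastic integral through It\^o's formula applied to $G$, the Radon--Nikodym density of the law of $X_{[0,1]}^0$ with respect to that of $W_{[0,1]}^0$ is $\exp\bigl(G(w(1)) - \tfrac12 N_{[0,1]}(w)\bigr)$. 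Conditioning both processes on the common terminal value $b$ turns $G(w(1)) = G(b)$ into a constant, so that
\[
\frac{\mathrm{d} P_{X^{0\to b}}}{\mathrm{d} P_{W^{0\to b}}}(w) = \frac{e^{-\frac12 N_{[0,1]}(w)}}{E\bigl[e^{-\frac12 N_{[0,1]}(W^{0\to b})}\bigr]}.
\]
Restricting to the positive-probability event $\{w\in K_\varepsilon\}$, all overall normalizing constants cancel between numerator and denominator, and I obtain, for every $\varepsilon>0$,
\[
E\bigl[F(X^{0\to b}\vert_{K_\varepsilon})\bigr] = \frac{E\bigl[\widehat F(W^{0\to b}\vert_{K_\varepsilon})\bigr]}{E\bigl[\widehat 1(W^{0\to b}\vert_{K_\varepsilon})\bigr]}.
\]

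Next I would let $\varepsilon\downarrow 0$ and invoke the weak convergence $W^{0\to b}\vert_{K_\varepsilon}\overset{\mathcal D}{\to} H^{g^-\to g^+}$ proved in \cite{ishitani_hatakenaka_suzuki}. The functionals $\widehat F$ and $\widehat 1$ are continuous on $C([0,1],\mathbb R)$ but not globally bounded; however, for $\varepsilon\le\varepsilon_0$ every measure in the family, as well as the limit, is carried by the compact tube $K(g^- - \eta^-(\varepsilon_0), g^+ + \eta^+(\varepsilon_0))$, on which $\mu$ and $\mu'$, and hence $N_{[0,1]}$, are bounded. Multiplying by a continuous, compactly supported cutoff equal to $1$ on this tube therefore replaces $\widehat F$ and $\widehat 1$ by bounded continuous functionals without changing any of the expectations, so weak convergence applies to numerator and denominator separately; since $E[\widehat 1(H^{g^-\to g^+})]>0$, the quotient converges and yields the equality of the first three expressions in the statement.

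For the decomposition formulae I would use the Markov decomposition of the Brownian house-moving: conditionally on $H^{g^-\to g^+}(t)=y$, the path decomposes as the independent concatenation $W_{[0,t]}^{0,y,(g^-,g^+)}\oplus W_{[t,1]}^{y,b,(g^-,g^+)}$, the meeting point having density $h(t,y)$, with the analogous three-block decomposition at two interior times carrying the extra factor $h(t_1,y_1,t_2,y_2)$. Because $N_{[0,1]}(w) = N_{[0,t]}(w) + N_{[t,1]}(w)$ is additive over adjoining intervals, the weight $\widehat F$ evaluated at a concatenated path factorizes across the blocks; substituting this decomposition into $E[\widehat F(H^{g^-\to g^+})]$ and dividing by $E[\widehat 1(H^{g^-\to g^+})]$ reproduces \eqref{dif_hm_decomp1} and \eqref{dif_hm_decomp2} term by term.

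Finally, to read off the law I would take $F(w)=\phi(w(t))$ in \eqref{dif_hm_decomp1} and $F(w)=\phi_1(w(t_1))\phi_2(w(t_2))$ in \eqref{dif_hm_decomp2}; the independence of the blocks factors the expectation of $e^{-\frac12 N_{[0,1]}}$ into a product of $E[e^{-\frac12 N_{[\cdot,\cdot]}}(\cdot)]$-factors over the subintervals, and comparing the resulting one-point marginal and the ratio of the two-point to the one-point density with the definitions yields $h_\mu(t,y)$ and $h_\mu(t_1,y_1,t_2,y_2)$ exactly. The identity $E[F(H_\mu^{g^-\to g^+})] = E[\widehat F(H^{g^-\to g^+})]/E[\widehat 1(H^{g^-\to g^+})]$ exhibits the law of $H_\mu^{g^-\to g^+}$ as absolutely continuous with respect to that of $H^{g^-\to g^+}$, with the multiplicative Feynman--Kac density $e^{-\frac12 N_{[0,1]}(w)}/E[\widehat 1(H^{g^-\to g^+})]$; realizing $H_\mu^{g^-\to g^+}$ on the same space thus gives it continuous paths inherited from $H^{g^-\to g^+}$ and preserves the Markov property, since the density factorizes over time. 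I expect the delicate point to be the limit step — justifying that the unbounded weight $\widehat F$ may be handled as a bounded continuous functional for the weak convergence through the support/cutoff argument, while keeping the denominator bounded away from zero so that the quotient, and not merely its numerator and denominator, converges.
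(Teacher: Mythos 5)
Your proposal is correct, and its core chain coincides with the paper's: your Girsanov-plus-conditioning step is exactly the paper's Lemma \ref{girsanov}/Lemma \ref{girsanov_func} reduction $E[F(X^{0\to b}\vert_{K_\varepsilon})]=I_W(\varepsilon,\widehat F)/I_W(\varepsilon,\widehat 1)$, and the limit $\varepsilon\downarrow 0$, the path decomposition of $H^{g^-\to g^+}$, and the reading-off of $h_\mu(t,y)$ and $h_\mu(t_1,y_1,t_2,y_2)$ all mirror the paper's use of the results of Ishitani--Hatakenaka--Suzuki. Two points differ. First, you explicitly repair the fact that $\widehat F$ is unbounded before invoking weak convergence; the paper is silent on this (it tacitly uses that all the laws charge only the tube, on which $N_{[0,1]}$ is bounded). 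Your fix is right in spirit, with one terminological caveat: the tube is not compact in $C([0,1],\mathbb{R})$ and no continuous compactly supported cutoff exists there, but a \emph{bounded} continuous cutoff, e.g.\ $\chi(w)=\max\{0,1-d_\infty(w,K(g^--\eta^-(\varepsilon_0),g^++\eta^+(\varepsilon_0)))\}$, equal to $1$ on the common support and vanishing off its $1$-neighbourhood, does exactly what you need. Second, and more substantially, your proof of the Markov property is a genuinely different route: you observe that the density $e^{-\frac12 N_{[0,1]}(w)}/E[\widehat 1(H^{g^-\to g^+})]$ splits, for every $t$, into an $\mathcal F_{[0,t]}$-measurable factor $e^{-\frac12 N_{[0,t]}}$ times an $\mathcal F_{[t,1]}$-measurable factor, so that the abstract Bayes formula together with the Markov property of $H^{g^-\to g^+}$ gives $Q(B\mid\mathcal F_{[0,t]})=E^P[e^{-\frac12 N_{[t,1]}}1_B\mid w(t)]/E^P[e^{-\frac12 N_{[t,1]}}\mid w(t)]$, a function of $w(t)$ alone; this is a standard and fully rigorous argument. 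The paper instead proves the Markov property by writing the Chapman--Kolmogorov identities for the $\varepsilon$-level transition densities $h_\mu(s,x,t,y;\varepsilon)$ of the conditioned diffusion bridge and passing them to the limit via Scheff\'e's lemma and dominated convergence, using the bound $e^{-\frac12 N_{[s,t]}(w)}\le e^{c_\mu(t-s)}$. Your route is shorter and avoids that delicate interchange of limits; the paper's route buys, as a by-product, the explicit convergence of the $\varepsilon$-level transition densities and the Chapman--Kolmogorov equations for $h_\mu$ themselves, which are used in the rest of the paper.
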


\begin{remark}
Let $\nu \in C^1(\mathbb{R}, \mathbb{R})$ and $\sigma \in C^1(\mathbb{R}, \mathbb{R}_{>0})$. 
We begin with a non-explosive diffusion $U = \{ U(t) \}_{t \geq 0}$ governed by the stochastic differential equation (SDE)
\begin{align*}
\mathrm{d} U(t) = \nu(U(t)) \mathrm{d} t + \sigma(U(t)) \mathrm{d} W(t), \quad U(0) = 0.
\end{align*}
Let
\begin{align*}
&L(y) := \int_0^y \frac{1}{\sigma(u)} \ \mathrm{d} u \quad (y\in \mathbb{R}) 
\quad \mbox{and}\quad
X(t) := L(U(t))\quad (t\geq 0). 
\end{align*}
Then It\^o's formula implies that $X = \{ X(t) \}_{t \geq 0}$ satisfies
\begin{align*}
\mathrm{d} X(t) = \mu(X(t)) \mathrm{d} t + \mathrm{d} W(t), \quad X(0) = 0
\end{align*}
where $\mu$ is given by 
\begin{align*}
\mu(y) := \left( \frac{\nu}{\sigma} - \frac{1}{2} \sigma' \right) \circ L^{-1}(y) 
=\frac{\nu (L^{-1}(y))}{\sigma (L^{-1}(y))} - \frac{1}{2} \sigma' (L^{-1}(y))
\quad (y \in \mathbb{R}).
\end{align*}
$L(g^{\pm})$ denote continuous functions $\{ L(g^{\pm}(t)) \}_{0\leq t \leq 1}\in C([0,1], \mathbb{R})$, respectively. 
Let $\overline{G}$ be an $\mathbb{R}$-valued bounded continuous function on $C([0,1],\mathbb{R})$. 
Then it follows that
\begin{align}
E[\overline{G}(X_{[0,1]}^{0 \to L(b)})] 
&= \lim_{\varepsilon \downarrow 0} \frac{ E[ \overline{G}(X) ; X(1) \in (L(b) - \varepsilon, L(b) + \varepsilon) ] }{ P(X(1) \in (L(b) - \varepsilon, L(b) + \varepsilon) ) } 
\nonumber \\
&= \lim_{\eta \downarrow 0} \frac{ E[ \overline{G}(L(U)) ; U(1) \in (b - \eta, b + \eta  )  ] }{ P( U(1) \in (b - \eta, b + \eta  ) ) } 
\nonumber \\
&= E[\overline{G}( L( U_{[0,1]}^{0 \to b} ) )].
\label{pinned_X_dist_pinned_U}
\end{align}
Thus it follows from \eqref{pinned_X_dist_pinned_U} and Theorem XXX that
\begin{align*}
&\lim_{\varepsilon \downarrow 0} 
E[\overline{G}(U_{[0,1]}^{0 \to b}\mid_{K(g^- - \eta^-(\varepsilon), g^+ + \eta^+(\varepsilon))})]\\
&=\lim_{\varepsilon \downarrow 0} 
E\big[(\overline{G}\circ L^{-1})(X_{[0,1]}^{0 \to L(b)}\mid_{K(L(g^- - \eta^-(\varepsilon)), L(g^+ + \eta^+(\varepsilon)))})\big]\\
&=E\big[ (\overline{G}\circ L^{-1}) (H_{\mu}^{L(g^-) \to L(g^+)})\big]
=E\big[ \overline{G} (L^{-1}(H_{\mu}^{L(g^-) \to L(g^+)}))\big].
\end{align*}
\end{remark}

Applying Theorem \ref{dif_hm}, we obtain some corollaries.
\begin{cor}
    \label{cor1}
    Let $g$ be an $\mathbb{R}$-valued $C^1$-function defined on $[0,1]$ that satisfies $g^-(t) < g(t) \leq g^+(t), 0 \leq t \leq 1$.
    Then, for $t \in (0,1)$, we have
    \begin{equation}\nonumber
        P\left( \min_{u \in [0,t]} \left\{ g(u) - H_{\mu}^{g^- \to g^+}(u) \right\} = 0 \right) = 0 .
    \end{equation}
\end{cor}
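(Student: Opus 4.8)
The plan is to read off the law of $H_{\mu}^{g^- \to g^+}$ from Theorem \ref{dif_hm} and transfer the statement to the Brownian house-moving, for which the channel geometry is transparent. Set $A := \{ w \in C([0,1],\mathbb{R}) : \min_{u \in [0,t]}( g(u) - w(u) ) = 0 \}$. The map $w \mapsto \min_{u \in [0,t]}( g(u) - w(u) )$ is $d_{\infty}$-continuous, so $A \in \mathcal{B}(C([0,1],\mathbb{R}))$, and $A$ depends on $w$ only through $\pi_{[0,t]}w$. The identity $E[ F(H_{\mu}^{g^- \to g^+}) ] = E[ \widehat{F}(H^{g^- \to g^+}) ]/E[ \widehat{1}(H^{g^- \to g^+}) ]$ in Theorem \ref{dif_hm} says, as an identity of measures, that the law of $H_{\mu}^{g^- \to g^+}$ has density $e^{-\frac12 N_{[0,1]}(w)}/E[ \widehat{1}(H^{g^- \to g^+}) ]$ with respect to the law of $H^{g^- \to g^+}$; by a monotone class argument this passes from bounded continuous $F$ to $F = \mathbf{1}_A$. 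Since $H^{g^- \to g^+}$ lives in $K(g^-,g^+)$, every path in its support takes values in the compact interval $[\min g^-, \max g^+]$, on which the continuous function $\mu' + \mu^2$ is bounded, say by $\Lambda$; hence $|N_{[0,1]}(w)| \le \Lambda$ there, the normalizer $E[ \widehat{1}(H^{g^- \to g^+}) ] = E[ e^{-\frac12 N_{[0,1]}(H^{g^- \to g^+})} ] \in [e^{-\Lambda/2}, e^{\Lambda/2}]$ is finite and strictly positive, and the density is bounded away from $0$ and $\infty$. In particular the two laws are equivalent, so it suffices to prove $P( \min_{u \in [0,t]}( g(u) - H^{g^- \to g^+}(u) ) = 0 ) = 0$ for the Brownian house-moving.

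For this I would invoke the decomposition \eqref{dif_hm_decomp1} in its $\mu \equiv 0$ form (where $N_{[0,1]} \equiv 0$ and $H_{\mu}^{g^- \to g^+} = H^{g^- \to g^+}$): conditioning on $H^{g^- \to g^+}(t) = y$ and using that $A$ only sees $[0,t]$ reduces the claim to showing $P( W_{[0,t]}^{0,y,(g^-,g^+)} \in A_t ) = 0$ for almost every $y \in (g^-(t), g^+(t))$, where $A_t := \{ v \in C([0,t],\mathbb{R}) : \min_{u \in [0,t]}( g(u) - v(u) ) = 0 \}$. Here $W_{[0,t]}^{0,y,(g^-,g^+)}$ is the case-(ii) process, the three-dimensional-Bessel-bridge-between-the-curves started at $0 = g^-(0)$ on the lower curve and pinned at the interior point $y$.

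On $A_t$ the path stays weakly below $g$ and meets it. Let $\tau := \inf\{ u \in (0,t] : W_{[0,t]}^{0,y,(g^-,g^+)}(u) = g(u) \}$. Since $W_{[0,t]}^{0,y,(g^-,g^+)}(0) = 0 < g(0)$ and, for $y < g(t)$, $W_{[0,t]}^{0,y,(g^-,g^+)}(t) = y < g(t)$, any contact is interior, so $A_t \subseteq \{\tau < t\}$ up to the exceptional endpoint behavior. In the open channel the process is locally absolutely continuous with respect to standard Brownian motion, so $W_{[0,t]}^{0,y,(g^-,g^+)} - g$ is a continuous semimartingale with unit diffusion coefficient which, by the strong Markov property, starts afresh at $0$ at time $\tau$; Blumenthal's zero--one law then forces it to be strictly positive in every right neighborhood of $\tau$, contradicting $W_{[0,t]}^{0,y,(g^-,g^+)} \le g$ on $[0,t]$. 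Hence $P( A_t \cap \{\tau < t\} ) = 0$, and the residual possibility $\{\tau = t\}$ forces $y = g(t)$, a single ($\mathrm{d}y$-null) value. Thus $P( W_{[0,t]}^{0,y,(g^-,g^+)} \in A_t ) = 0$ for a.e. $y$, and integrating against $h(t,y)\,\mathrm{d}y$ gives the claim for $H^{g^- \to g^+}$, whence for $H_{\mu}^{g^- \to g^+}$ by the equivalence of laws above.

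The main obstacle is the tangency argument precisely at points where $g(u^*) = g^+(u^*)$, i.e.\ when contact occurs on the upper curve rather than in the open channel: there the local comparison with Brownian motion breaks down because the process is conditioned to stay below $g^+$, so the ``immediate crossing'' of the previous paragraph must be replaced by the statement that $W_{[0,t]}^{0,y,(g^-,g^+)}$ does not touch its upper boundary at interior times. I would settle this either through the absolute continuity of $W_{[0,t]}^{0,y,(g^-,g^+)}$, near the upper curve, with respect to a BES$(3)$-bridge measured downward from $g^+$ (so that interior contact with $g^+$ inherits the vanishing contact probability of BES$(3)$ with $0$), or, more economically, by quoting the corresponding sample-path property of the Brownian house-moving from \cite{ishitani_hatakenaka_suzuki} and letting the equivalence of the first paragraph carry it over to $H_{\mu}^{g^- \to g^+}$.
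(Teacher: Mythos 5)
Your first paragraph is, in substance, the paper's entire proof: the paper observes that Theorem \ref{dif_hm} makes the law of $H_{\mu}^{g^- \to g^+}$ absolutely continuous with respect to that of the Brownian house-moving $H^{g^- \to g^+}$, and then simply quotes Corollary 3.2 of \cite{ishitani_hatakenaka_suzuki} for the statement about $H^{g^- \to g^+}$. (You actually prove more than is needed there: equivalence of the two laws with density bounded away from $0$ and $\infty$; to transfer a null set from $H^{g^- \to g^+}$ to $H_{\mu}^{g^- \to g^+}$ only the one direction $P \circ (H_{\mu}^{g^- \to g^+})^{-1} \ll P \circ (H^{g^- \to g^+})^{-1}$ is used, though the boundedness of $e^{-\frac{1}{2}N_{[0,1]}}$ on $K(g^-,g^+)$ is a clean way to justify the monotone-class extension from bounded continuous $F$ to indicators.) Where you diverge is in attempting to reprove the Brownian case directly. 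Your strong-Markov/Blumenthal argument (``touching forces immediate crossing'') is sound at contact points lying strictly inside the channel, since there the conditioned bridge is locally an $h$-transform of Brownian motion with smooth positive $h$, hence locally equivalent to Brownian motion with $C^1$ drift, whose oscillation defeats the drift. But, as you yourself flag, the case $g(u^*) = g^+(u^*)$ is exactly the hard part: the process cannot cross $g^+$, so ``no touching without crossing'' must be replaced by ``no interior contact with the upper boundary at all,'' and that is precisely the nontrivial sample-path property of the Brownian house-moving established in \cite{ishitani_hatakenaka_suzuki} (via the BES(3)-type comparison you sketch). So your ``more economical'' fallback --- quote \cite{ishitani_hatakenaka_suzuki} and transfer by absolute continuity --- coincides with the paper's proof, while your direct route is a genuine but incomplete alternative whose missing step is essentially the cited result itself (at least its $g = g^+$ instance); as written, the proposal is correct provided that fallback is invoked.
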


\begin{cor}
    \label{cor2}
    Let $g$ be an $\mathbb{R}$-valued $C^1$-function defined on $[0,1]$ that satisfies $g^-(t) \leq g(t) < g^+(t), 0 \leq t \leq 1$.
    Then, for $t \in (0,1)$, we have
    \begin{equation}\nonumber
        P\left( \min_{u \in [t,1]} \left\{ H_{\mu}^{g^- \to g^+}(u) - g(u) \right\} = 0 \right) = 0 .
    \end{equation}
\end{cor}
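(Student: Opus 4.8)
The plan is to localize the event to the interval $[t,1]$ by means of the decomposition \eqref{dif_hm_decomp1}, and then to reduce it to a classical fact about a diffusion touching a smooth curve. Set $\rho:=\min_{0\le u\le 1}\bigl(g^+(u)-g(u)\bigr)>0$, let
\begin{align*}
A_{[t,1]}:=\Bigl\{\,w\in C([t,1],\mathbb{R})\ \Big|\ \min_{u\in[t,1]}\bigl(w(u)-g(u)\bigr)=0\,\Bigr\},
\end{align*}
and put $A:=\pi_{[t,1]}^{-1}(A_{[t,1]})\subseteq C([0,1],\mathbb{R})$, so that $\{H_{\mu}^{g^- \to g^+}\in A\}$ is exactly the event in the corollary. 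Since $w\mapsto\min_{u\in[t,1]}(w(u)-g(u))$ is $d_\infty$-continuous, $A_{[t,1]}$ is closed and $\mathbf{1}_{A_{[t,1]}}$ is the decreasing pointwise limit of the bounded continuous functions $F_n(w):=\max\{1-n\,|\min_{u\in[t,1]}(w(u)-g(u))|,\,0\}$, each of which depends on $w$ only through $\pi_{[t,1]}(w)$.

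First I would pass from bounded continuous test functions to $\mathbf{1}_{A_{[t,1]}}$. Applying \eqref{dif_hm_decomp1} with $F=F_n$ and using the independence of the two building blocks together with $N_{[0,1]}(w_1\oplus w_2)=N_{[0,t]}(w_1)+N_{[t,1]}(w_2)$, the expectation in the numerator factorizes, giving
\begin{align*}
E\bigl[F_n(H_\mu^{g^-\to g^+})\bigr]
=\int_{g^-(t)}^{g^+(t)}\frac{E\bigl[e^{-\frac12 N_{[0,t]}(W_{[0,t]}^{0,y,(g^-,g^+)})}\bigr]}{E[\widehat{1}(H^{g^-\to g^+})]}\,
E\Bigl[e^{-\frac12 N_{[t,1]}(W_{[t,1]}^{y,b,(g^-,g^+)})}F_n\bigl(W_{[t,1]}^{y,b,(g^-,g^+)}\bigr)\Bigr]\,h(t,y)\,\mathrm{d}y.
\end{align*}
The choice $F\equiv 1$ shows that the integrand for $F\equiv1$ has total mass $1$ and dominates the integrand above uniformly in $n$; hence letting $n\to\infty$ and invoking dominated convergence on both sides yields the same identity with $F_n$ replaced by $\mathbf{1}_{A_{[t,1]}}$, whose left-hand side is $P(H_\mu^{g^-\to g^+}\in A)$. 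Because $e^{-\frac12 N_{[t,1]}}>0$ and every remaining factor is nonnegative, this probability vanishes once I establish, for Lebesgue-a.e.\ $y\in(g^-(t),g^+(t))$,
\begin{align*}
P\Bigl(\min_{u\in[t,1]}\bigl(W_{[t,1]}^{y,b,(g^-,g^+)}(u)-g(u)\bigr)=0\Bigr)=0.
\end{align*}

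Next I would analyze this single building block. If $y\le g(t)$ the starting gap $W_{[t,1]}^{y,b,(g^-,g^+)}(t)-g(t)\le 0$ is negative except for the single value $y=g(t)$, so for a.e.\ such $y$ the minimum is a.s.\ strictly negative and the probability is $0$; thus I may assume $y>g(t)$. The process $W_{[t,1]}^{y,b,(g^-,g^+)}$ is the case-(iii) object: a continuous strong Markov process on $[t,1]$ confined to the closed tube $K_{[t,1]}(g^-,g^+)$, started at $y$ and pinned at the upper boundary, $W_{[t,1]}^{y,b,(g^-,g^+)}(1)=b=g^+(1)$, so its terminal gap to $g$ is $g^+(1)-g(1)\ge\rho>0$. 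I would then localize away from the terminal time: for $\delta\in(0,1-t)$, the Markov property identifies the law of $\pi_{[t,1-\delta]}(W_{[t,1]}^{y,b,(g^-,g^+)})$, conditionally on the (a.e.\ interior) value $z$ at time $1-\delta$, with the case-(i) process $W_{[t,1-\delta]}^{y\to z}\vert_{K_{[t,1-\delta]}(g^-,g^+)}$, which is absolutely continuous with respect to the Brownian bridge, and hence with respect to Brownian motion started at $y$, on $[t,1-\delta]$. For Brownian motion the event that a path stays $\ge g$ on $[t,1-\delta]$ while making tangential contact with the $C^1$ curve $g$ in the interior is null, a standard consequence of the strong Markov property and the regularity of a point for a one-dimensional diffusion; transferring through the two absolute continuities and integrating over $z$ gives $P(\min_{u\in[t,1-\delta]}(W_{[t,1]}^{y,b,(g^-,g^+)}(u)-g(u))=0)=0$. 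Writing $W=W_{[t,1]}^{y,b,(g^-,g^+)}$, I conclude from
\begin{align*}
P\Bigl(\min_{u\in[t,1]}\bigl(W(u)-g(u)\bigr)=0\Bigr)
\le
P\Bigl(\min_{u\in[t,1-\delta]}\bigl(W(u)-g(u)\bigr)=0\Bigr)
+P\Bigl(\inf_{u\in[1-\delta,1]}\bigl(W(u)-g(u)\bigr)\le\tfrac{\rho}{2}\Bigr),
\end{align*}
in which the first term is $0$ and the second tends to $0$ as $\delta\downarrow0$ because $W(u)-g(u)\to g^+(1)-g(1)\ge\rho$ as $u\uparrow1$ by continuity and the terminal pinning. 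Letting $\delta\downarrow0$ completes the building-block estimate and, with the first step, proves the corollary; Corollary~\ref{cor1} follows by the symmetric argument applied to the case-(ii) block $W_{[0,t]}^{0,y,(g^-,g^+)}$, pinned at the lower boundary at the initial time.

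The main obstacle I anticipate is the third step: rigorously establishing that, on time-intervals bounded away from the terminal time, the pinned-and-confined building block is absolutely continuous with respect to ordinary Brownian motion, so that the classical ``tangential contact with a $C^1$ curve is a null event'' transfers. The terminal time itself is harmless precisely because the pinned endpoint value $g^+(1)$ lies a distance at least $\rho$ above $g(1)$, which is what lets the endpoint contribution be controlled by a crude continuity bound rather than by a local analysis of the singular pinning.
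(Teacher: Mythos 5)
Your overall strategy is sound and genuinely different from the paper's. The paper disposes of Corollaries \ref{cor1} and \ref{cor2} in one stroke: by Theorem \ref{dif_hm}, the law of $H_{\mu}^{g^- \to g^+}$ is absolutely continuous with respect to that of the Brownian house-moving $H^{g^- \to g^+}$ (with density proportional to $e^{-\frac{1}{2}N_{[0,1]}(w)}$), and the corresponding no-contact statements for $H^{g^- \to g^+}$ are exactly Corollaries 3.2 and 3.3 of \cite{ishitani_hatakenaka_suzuki}, so null sets transfer. You instead re-derive the Brownian-level fact from scratch through the decomposition \eqref{dif_hm_decomp1}: the factorization of $N_{[0,1]}$ across $\oplus$, the monotone passage from $F_n$ to the indicator, the dismissal of $y \leq g(t)$, and the endpoint estimate as $\delta \downarrow 0$ (nested events with empty intersection) are all correct. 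The price of this self-contained route is that you must handle the pinning singularity yourself, and that is where there is a genuine error.

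The flawed step is the claim that the case-(i) process $W_{[t,1-\delta]}^{y \to z}\vert_{K_{[t,1-\delta]}(g^-,g^+)}$ is ``absolutely continuous with respect to the Brownian bridge, and hence with respect to Brownian motion started at $y$, on $[t,1-\delta]$.'' The first half is true (conditioning on an event of positive probability), but the second implication is false: a Brownian bridge is \emph{singular} with respect to Brownian motion on its full time interval, since the bridge law is carried by $\{ w \mid w(1-\delta) = z \}$, a Wiener-null set. As written, you therefore cannot transfer the classical ``tangential contact with a $C^1$ curve is Wiener-null'' fact. The repair is standard but must be stated: either (a) restrict to proper sub-intervals $[t,s]$ with $s < 1-\delta$, on which the bridge \emph{is} absolutely continuous with respect to $W_{[t,s]}^{y}$ with density $p_W(1-\delta-s, w(s), z)/p_W(1-\delta-t, y, z)$, cover interior contact by countably many such $[t,s]$ with $s$ rational, and rule out contact at the two endpoints using the positive gaps $y - g(t) > 0$ and $z - g(1-\delta) \neq 0$ for a.e.\ $z$; or (b) avoid conditioning and use the disintegration $P( W_{[t,1-\delta]}^{y} \in B ) = \int P( W_{[t,1-\delta]}^{y \to z} \in B )\, p_W(1-\delta-t,y,z)\, \mathrm{d} z$, which shows that any Wiener-null $B$ is null under the bridge for Lebesgue-a.e.\ endpoint $z$; since the law of $W_{[t,1]}^{y,b,(g^-,g^+)}(1-\delta)$ has a Lebesgue density, this suffices. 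With either repair (plus a citation to \cite{ishitani_hatakenaka_suzuki} for the Markov/conditional structure of the limit process $W_{[t,1]}^{y,b,(g^-,g^+)}$, which you assert rather than prove), your argument goes through. Note, though, how much shorter the route through the global absolute continuity $H_{\mu}^{g^- \to g^+} \ll H^{g^- \to g^+}$ is; your own first step, the extension from bounded continuous test functions to indicators, is precisely what makes that one-line transfer rigorous.
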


\begin{cor}
    \label{cor3}
    Let $t \in (0,1)$ and $R_{[0,t]} = \{ R_{[0,t]}(u) \}_{u \in [0,t]}$ be three dimensional Bessel process starting from $0$ on the time interval $[0,t]$.
    Then, we have
    \begin{equation}\label{eqcor3}
        \begin{aligned}
            &\frac{ \mathrm{d} \left( P \circ \left( \pi_{[0,t]} \circ H_{\mu}^{g^- \to g^+} \right)^{-1} \right) }{ \mathrm{d} \left( P \circ \left( R_{[0,t]} + g^- \right)^{-1} \right) }(w) \\
            &= \sqrt{\frac{\pi}{2}} \frac{ q_{[t,1]}^{(g^-,g^+),(\downarrow)}(w(t)) }{ C_{g^-,g^+} \sqrt{1-t} (w(t) - g^-(t)) Z_{[0,t]}^{g^-}(w) } 1_{K_{[0,t]}^-(g^+)}(w) \\
            &\times \frac{ E[ e^{-\frac{1}{2}N_{[t,1]}(W^{w(t), b, (g^-,g^+)})} ] }{ E[ e^{-\frac{1}{2}N_{[0,1]}(H^{g^- \to g^+})} ] } e^{-\frac{1}{2}N_{[0,t]}(w)} , \quad w \in C([0,t] , \mathbb{R}) .
        \end{aligned}
    \end{equation}
\end{cor}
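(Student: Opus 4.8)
The plan is to specialize the one-time decomposition \eqref{dif_hm_decomp1} of Theorem \ref{dif_hm} to functionals that depend only on the restriction to $[0,t]$, and then to unfold the Brownian house-moving piece $W_{[0,t]}^{0,y,(g^-,g^+)}$ into a three-dimensional Bessel process shifted by $g^-$. First I would test the law of $\pi_{[0,t]}\circ H_{\mu}^{g^-\to g^+}$ against an arbitrary bounded continuous $f$ on $C([0,t],\mathbb{R})$ by taking $F(w):=f(\pi_{[0,t]}w)$ in \eqref{dif_hm_decomp1}. Since $N_{[0,1]}(w)=N_{[0,t]}(w)+N_{[t,1]}(w)$ and the two concatenated pieces are independent, the numerator factorizes as $A(y)B(y)$ with
\[ A(y):=E\big[e^{-\frac12 N_{[0,t]}(W_{[0,t]}^{0,y,(g^-,g^+)})}f(W_{[0,t]}^{0,y,(g^-,g^+)})\big],\qquad B(y):=E\big[e^{-\frac12 N_{[t,1]}(W_{[t,1]}^{y,b,(g^-,g^+)})}\big], \]
so that, inserting the definition of $h(t,y)$,
\[ E[f(\pi_{[0,t]}\circ H_{\mu}^{g^-\to g^+})]=\frac{1}{C_{g^-,g^+}\sqrt{t}\sqrt{1-t}\,E[\widehat1(H^{g^-\to g^+})]}\int_{g^-(t)}^{g^+(t)}A(y)\,B(y)\,q_{[0,t]}^{(g^-,g^+),(\uparrow)}(y)\,q_{[t,1]}^{(g^-,g^+),(\downarrow)}(y)\,\mathrm{d}y. \]

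The core step is to identify the product $A(y)\,q_{[0,t]}^{(g^-,g^+),(\uparrow)}(y)$ as an (unconditioned) Bessel-bridge expectation. Using the identification of the house-moving piece $W_{[0,t]}^{0,y,(g^-,g^+)}$ started on the lower boundary as the $\widetilde{Z}_{[0,t]}^{g^--g^-(0)}$-tilt of $R_{[0,t]}^{0\to y-g^-(t)}\mid_{K_{[0,t]}^{-}(g^+-g^-)}+g^-$ — the relation already encoded in the definition of $q_{[0,t]}^{(\uparrow)}$ and in the construction of Section~2 — the normalizing factor $E[\widetilde{Z}_{[0,t]}^{g^--g^-(0)}(R_{[0,t]}^{0\to y-g^-(t)}\mid_{K_{[0,t]}^{-}(g^+-g^-)})^{-1}]$ hidden in $A(y)$ cancels the first factor of $q_{[0,t]}^{(\uparrow)}(y)$, while the factor $P(R_{[0,t]}^{0\to y-g^-(t)}\in K_{[0,t]}^{-}(g^+-g^-))$ removes the conditioning. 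Since $g^-(0)=0$, the identity $\widetilde{Z}_{[0,t]}^{g^-}(R)=Z_{[0,t]}^{g^-}(R+g^-)$ turns the tilt into $Z_{[0,t]}^{g^-}(w)^{-1}$ with $w=R_{[0,t]}+g^-$, and the event $\{R_{[0,t]}\in K_{[0,t]}^{-}(g^+-g^-)\}$ becomes $\{w\in K_{[0,t]}^{-}(g^+)\}$, yielding the indicator $1_{K_{[0,t]}^{-}(g^+)}(w)$ in \eqref{eqcor3}.

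It then remains to recombine the outer $y$-integral — weighted by the Brownian meander endpoint density $P(W_{[0,t]}^{+}(t)\in\mathrm{d}y-g^-(t))$ carried in $q_{[0,t]}^{(\uparrow)}$ — with the Bessel-bridge expectation over $R_{[0,t]}^{0\to y-g^-(t)}$, into a single expectation over the unpinned Bessel process $R_{[0,t]}$ via disintegration over its terminal value. The mismatch between the meander and Bessel endpoint laws is exactly the scalar
\[ \frac{P(W_{[0,t]}^{+}(t)\in\mathrm{d}r)/\mathrm{d}r}{P(R_{[0,t]}(t)\in\mathrm{d}r)/\mathrm{d}r}=\sqrt{\frac{\pi}{2}}\,\frac{\sqrt{t}}{r},\qquad r=w(t)-g^-(t), \]
which produces the prefactor $\sqrt{\pi/2}$, cancels the $\sqrt{t}$ coming from $h(t,y)$, and supplies the $1/(w(t)-g^-(t))$ in \eqref{eqcor3}. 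The surviving endpoint-dependent factors $B(y)$ and $q_{[t,1]}^{(\downarrow)}(y)$, evaluated at $y=w(t)$, become precisely the ratio $E[e^{-\frac12 N_{[t,1]}(W^{w(t),b,(g^-,g^+)})}]/E[e^{-\frac12 N_{[0,1]}(H^{g^-\to g^+})}]$ multiplied by $q_{[t,1]}^{(\downarrow)}(w(t))/C_{g^-,g^+}$. Collapsing the integral to $E[f(R_{[0,t]}+g^-)\,D(R_{[0,t]}+g^-)]$ with $D$ the claimed derivative, and letting $f$ range over all bounded continuous functions, gives the asserted Radon--Nikodym identity.

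I expect the main obstacle to be the rigorous justification of the endpoint disintegration and the interchange of the $y$-integration with the Bessel-bridge expectation: one must verify that the family $y\mapsto\operatorname{law}(R_{[0,t]}^{0\to y-g^-(t)})$ glues measurably into the unpinned law with the correct terminal density, and that the tilt $\widetilde{Z}_{[0,t]}^{g^-}$ together with $1/(w(t)-g^-(t))$ remains integrable under the Bessel law, so that the pushforward measure is genuinely absolutely continuous with the stated density (in particular that $w(t)-g^-(t)>0$ and $0<Z_{[0,t]}^{g^-}(w)<\infty$ hold almost surely under $P\circ(R_{[0,t]}+g^-)^{-1}$). The remaining manipulations — the factorization of the $N$-exponentials and the cancellation of normalizing constants — are then routine bookkeeping.
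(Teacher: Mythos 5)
Your proposal is correct---the constants do work out (the meander-to-Bessel endpoint density ratio $\sqrt{\pi/2}\,\sqrt{t}/r$ absorbs the $1/\sqrt{t}$ in $h(t,y)$ and produces both the prefactor $\sqrt{\pi/2}$ and the factor $1/(w(t)-g^-(t))$)---but it takes a genuinely different route from the paper. The paper's proof is a short two-factor chain rule through the law of the \emph{Brownian} house-moving: using $E[F(H_{\mu}^{g^-\to g^+})]=E[\widehat{F}(H^{g^-\to g^+})]/E[\widehat{1}(H^{g^-\to g^+})]$ from Theorem \ref{dif_hm}, the Markov property of $H^{g^-\to g^+}$, and the identity $E^{H^{g^-\to g^+}}[e^{-\frac{1}{2}N_{[t,1]}(w)}\mid w(t)]=E[e^{-\frac{1}{2}N_{[t,1]}(W^{w(t),b,(g^-,g^+)})}]$, it first computes the Radon--Nikodym derivative of $\pi_{[0,t]}\circ H_{\mu}^{g^-\to g^+}$ with respect to $\pi_{[0,t]}\circ H^{g^-\to g^+}$ (this is exactly the $\mu$-dependent factor in \eqref{eqcor3}), and then multiplies by the derivative of $\pi_{[0,t]}\circ H^{g^-\to g^+}$ with respect to $R_{[0,t]}+g^-$, quoted verbatim from Theorem 3.2 of \cite{ishitani_hatakenaka_suzuki} (the $\mu$-free factor). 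You never form the intermediate measure $P\circ(\pi_{[0,t]}\circ H^{g^-\to g^+})^{-1}$: you instead unfold \eqref{dif_hm_decomp1}, open up $q_{[0,t]}^{(g^-,g^+),(\uparrow)}$, and reassemble the pinned Bessel bridges into the free Bessel process by endpoint disintegration---in effect re-proving Theorem 3.2 of \cite{ishitani_hatakenaka_suzuki} inline. The paper's route buys brevity and cleanly isolates where $\mu$ enters; the technical obstacles you flag at the end (measurable gluing of the bridge family, integrability of the tilt, a.s. positivity of $w(t)-g^-(t)$) are precisely what the cited theorem packages away. Your route avoids using that theorem as a black box, but be aware that its key input---the identification of the law of $W_{[0,t]}^{0,y,(g^-,g^+)}$ as the $\widetilde{Z}_{[0,t]}^{g^-}$-tilt of $R_{[0,t]}^{0\to y-g^-(t)}\mid_{K_{[0,t]}^{-}(g^+-g^-)}+g^-$, normalized by $E[\widetilde{Z}_{[0,t]}^{g^-}(R_{[0,t]}^{0\to y-g^-(t)}\mid_{K_{[0,t]}^{-}(g^+-g^-)})^{-1}]$---is \emph{not} proved by the definition of $q_{[0,t]}^{(g^-,g^+),(\uparrow)}$ or by anything in this paper's Preliminaries; it is itself a result of \cite{ishitani_hatakenaka_suzuki} and must be cited as such. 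Granting that citation, your argument is complete.
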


\begin{cor}
    \label{cor4}
    Let $g^-(u) = 0, g^+(u) = b ~ (0 \leq u \leq 1).$
    Then, diffusion house-moving $H_{\mu}^{g^- \to g^+}$ has the space-time reversal property that
    \begin{equation}\nonumber
        P( H_{\mu}^{0 \to b}(t) \in \mathrm{d} y ) = P( H_{\mu}^{0 \to b}(1-t) \in b - \mathrm{d} y )
    \end{equation}
    if and only if
    \begin{equation}\nonumber
        \begin{aligned}
            &E[e^{-\frac{1}{2}N_{[0,t]}(W_{[0,t]}^{0,y,(0,b)})}] E[e^{-\frac{1}{2}N_{[t,1]}(W_{[t,1]}^{y,b,(0,b)})}] \\
            &= E[e^{-\frac{1}{2}N_{[0,1-t]}(W_{[0,1-t]}^{0,b-y,(0,b)})}] E[e^{-\frac{1}{2}N_{[1-t,1]}(W_{[1-t,1]}^{b-y,b,(0,b)})}] .
        \end{aligned}
    \end{equation}
    In particular, if $\mu$ is constant, then $H_{\mu}^{g^- \to g^+}$ has the space-time reversal property.
\end{cor}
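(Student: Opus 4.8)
The plan is to reduce the stated space-time reversal property to an equality of one-time marginal densities and then exploit the factorisation of $h_\mu$ furnished by Theorem~\ref{dif_hm}. By the last display of Theorem~\ref{dif_hm}, the marginal law of $H_\mu^{0 \to b}$ at time $t$ has density $h_\mu(t,y)$; since the reflection $y \mapsto b-y$ has unit Jacobian, the identity $P(H_\mu^{0\to b}(t) \in \mathrm{d} y) = P(H_\mu^{0\to b}(1-t) \in b - \mathrm{d} y)$ is equivalent to the pointwise identity $h_\mu(t,y) = h_\mu(1-t,b-y)$ for $y \in (0,b)$. Using the definition of $h_\mu$ I would write $h_\mu(t,y) = Z^{-1} A(t,y) h(t,y)$, where $A(t,y) := E[e^{-\frac12 N_{[0,t]}(W_{[0,t]}^{0,y,(0,b)})}]\, E[e^{-\frac12 N_{[t,1]}(W_{[t,1]}^{y,b,(0,b)})}]$ and $Z := E[e^{-\frac12 N_{[0,1]}(H^{0 \to b})}]$, and likewise $h_\mu(1-t,b-y) = Z^{-1} B(t,y) h(1-t,b-y)$, where $B(t,y)$ denotes the right-hand side of the stated condition. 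Note that $Z$ is common to both expressions.

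The decisive ingredient is the reversal symmetry of the \emph{Brownian} house-moving with constant curves, namely $h(t,y) = h(1-t,b-y)$ together with positivity $h(t,y) > 0$ for $y \in (0,b)$. I would obtain this either by citing~\cite{ishitani_hatakenaka_suzuki} or directly: for constant curves the map $\Phi(w)(\cdot) := b - w(1-\cdot)$ sends a Brownian bridge from $0$ to $b$ to a Brownian bridge from $0$ to $b$ (time reversal turns $0\to b$ into $b\to 0$, the reflection $w \mapsto b-w$ turns it back into $0\to b$), and $\Phi$ preserves each strip event $K(-\varepsilon, b+\varepsilon)$. Hence the conditioned bridges $W^{0\to b}\mid_{K(-\varepsilon, b+\varepsilon)}$ are $\Phi$-invariant, and so is their weak limit $H^{0 \to b}$, which gives $H^{0\to b}(t) \overset{\mathcal{D}}{=} b - H^{0\to b}(1-t)$ and therefore $h(t,y) = h(1-t,b-y)$.

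Granting this, since $h(t,y) = h(1-t,b-y) > 0$ and $Z > 0$, dividing out the common factors $h$ and $Z$ shows that $h_\mu(t,y) = h_\mu(1-t,b-y)$ if and only if $A(t,y) = B(t,y)$, which is exactly the displayed condition; this establishes the ``if and only if''. For the final assertion I would take $\mu \equiv c$ constant, so that $\mu' \equiv 0$ and $\mu^2 \equiv c^2$, whence $N_{[s,u]}(w) = c^2(u-s)$ for every path $w$. Because the conditioned-bridge laws are probability measures, each expectation in $A$ and $B$ collapses to a deterministic exponential, giving $A(t,y) = e^{-\frac12 c^2 t} e^{-\frac12 c^2(1-t)} = e^{-c^2/2} = e^{-\frac12 c^2(1-t)} e^{-\frac12 c^2 t} = B(t,y)$, so the condition holds automatically and $H_\mu^{0 \to b}$ has the space-time reversal property.

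The only genuine obstacle is establishing $h(t,y) = h(1-t,b-y)$ with strict positivity; everything else is bookkeeping and the cancellation of the shared factors $h$ and $Z$. I expect this symmetry to be immediate from the cited construction of the Brownian house-moving, so the main work is simply to verify that the normalising constant $Z$ and the Brownian factor $h$ are genuinely common to both sides and may be removed.
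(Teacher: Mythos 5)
Your proposal is correct and follows essentially the same route as the paper: the paper's proof consists of the single remark that the corollary ``follows from the expression for the density of $H_{\mu}^{g^- \to g^+}$,'' i.e.\ precisely your reduction $h_{\mu}(t,y)=Z^{-1}A(t,y)h(t,y)$ together with the symmetry and positivity of the Brownian factor $h(t,y)=h(1-t,b-y)$ (known from \cite{ishitani_hatakenaka_suzuki}, and also obtainable by your reflection map $\Phi(w)(\cdot)=b-w(1-\cdot)$ applied to the conditioned bridges), after which cancelling $h$ and $Z$ yields the stated equivalence and the constant-$\mu$ case follows since $N_{[s,u]}(w)=\mu^2(u-s)$ is deterministic. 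You have simply supplied the bookkeeping that the paper leaves implicit.
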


Similarly to Theorem \ref{dif_hm}, we obtain the following weak convergence result.

\begin{prop}
    \label{dif_bessel}
    Let $0 \leq T_1 < T_2$.
    Assume that $\alpha = g^{\pm}(T_1), g^-(T_2) < \beta < g^+(T_2)$ or $g^-(T_1) < \alpha < g^+(T_1) ,\beta = g^{\pm}(T_2)$.
    Then, there exists an $\mathbb{R}$-valued continuous Markov process $X_{[T_1,T_2]}^{\alpha,\beta,(g^-,g^+)} = \{ X_{[T_1,T_2]}^{\alpha,\beta,(g^-,g^+)}(t) \}_{t \in [T_1,T_2]}$ that satisfies
    \begin{equation}\nonumber
        \begin{aligned}
            E[ F(X_{[T_1,T_2]}^{\alpha,\beta,(g^-,g^+)}) ] &= \lim_{\varepsilon \downarrow 0} E[ F( X_{[T_1,T_2]}^{\alpha \to \beta}\mid_{K(g^- - \eta^-(\varepsilon), g^+ + \eta^+(\varepsilon))} ) ] \\
            &= \frac{ E[ \widehat{F}(W_{[T_1,T_2]}^{\alpha,\beta,(g^-,g^+)}) ] }{ E[ \widehat{1}(W_{[T_1,T_2]}^{\alpha,\beta,(g^-,g^+)}) ] }
        \end{aligned}
    \end{equation}
    for every $\mathbb{R}$-valued bounded continuous function $F$ on $C([T_1,T_2], \mathbb{R})$, where $\widehat{F}$ is defined as
    \begin{equation}\nonumber
        \widehat{F}(w) := e^{-\frac{1}{2} N_{[T_1,T_2]}(w)} F(w), \ w \in C([T_1,T_2] , \mathbb{R}) .
    \end{equation}
\end{prop}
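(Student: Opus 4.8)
The plan is to follow the strategy of Theorem~\ref{dif_hm}, but in the simpler one-interval setting where the weak limit of the unconditioned Brownian object is the three-dimensional Bessel bridge $W_{[T_1,T_2]}^{\alpha,\beta,(g^-,g^+)}$ of cases~(ii) and~(iii) rather than the house-moving. The engine is Girsanov's theorem, which expresses the law of the diffusion bridge $X_{[T_1,T_2]}^{\alpha \to \beta}$ as an absolutely continuous, multiplicative perturbation of the Brownian bridge $W_{[T_1,T_2]}^{\alpha \to \beta}$, after which the limit $\varepsilon\downarrow0$ is inherited from the already-established Brownian convergence.

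First I would write down the Cameron--Martin--Girsanov density of $X_{[T_1,T_2]}^{\alpha}$ with respect to $W_{[T_1,T_2]}^{\alpha}$, namely $\exp\bigl(\int_{T_1}^{T_2}\mu(w(u))\,\mathrm{d}w(u) - \frac12\int_{T_1}^{T_2}\mu^2(w(u))\,\mathrm{d}u\bigr)$, and convert the stochastic integral by It\^o's formula using $G(y)=\int_0^y\mu(z)\,\mathrm{d}z$. This turns the density into $\exp\bigl(G(w(T_2)) - G(\alpha)\bigr)\,e^{-\frac12 N_{[T_1,T_2]}(w)}$, where $N_{[T_1,T_2]}$ is exactly the additive functional appearing in $\widehat F$. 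The crucial point is that the endpoint factor $\exp(G(w(T_2))-G(\alpha))$ depends only on $w(T_2)$; disintegrating both $P^{X^\alpha}$ and $P^{W^\alpha}$ over the terminal value and fixing $w(T_2)=\beta$ makes it the constant $\exp(G(\beta)-G(\alpha))$, which cancels against the transition densities and yields the bridge identity
\begin{equation}\nonumber
\frac{\mathrm{d}P^{X_{[T_1,T_2]}^{\alpha\to\beta}}}{\mathrm{d}P^{W_{[T_1,T_2]}^{\alpha\to\beta}}}(w) = \frac{e^{-\frac12 N_{[T_1,T_2]}(w)}}{E[e^{-\frac12 N_{[T_1,T_2]}(W_{[T_1,T_2]}^{\alpha\to\beta})}]}.
\end{equation}

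Next I would intersect with the tube event $K_\varepsilon := K_{[T_1,T_2]}(g^- - \eta^-(\varepsilon), g^+ + \eta^+(\varepsilon))$. Substituting the density into the conditional expectation and noting that the normalizing constant $E[e^{-\frac12 N}]$ cancels between numerator and denominator, one obtains for every bounded continuous $F$
\begin{equation}\nonumber
E[F(X_{[T_1,T_2]}^{\alpha\to\beta}\mid_{K_\varepsilon})] = \frac{E[\widehat F(W_{[T_1,T_2]}^{\alpha\to\beta}\mid_{K_\varepsilon})]}{E[\widehat 1(W_{[T_1,T_2]}^{\alpha\to\beta}\mid_{K_\varepsilon})]}.
\end{equation}
It remains to pass to the limit $\varepsilon\downarrow0$. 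Because every conditioned path lies in the fixed tube $K_{[T_1,T_2]}(g^- - \eta^-(\varepsilon_0), g^+ + \eta^+(\varepsilon_0))$ for $\varepsilon\le\varepsilon_0$, on which $\mu$ and $\mu'$ are bounded and $w\mapsto N_{[T_1,T_2]}(w)$ is $d_{\infty}$-continuous, the functional $\widehat F(w)=e^{-\frac12 N_{[T_1,T_2]}(w)}F(w)$ can be replaced by a globally bounded continuous functional agreeing with it on the tube. The weak convergence $W_{[T_1,T_2]}^{\alpha\to\beta}\mid_{K_\varepsilon}\overset{\mathcal D}{\to} W_{[T_1,T_2]}^{\alpha,\beta,(g^-,g^+)}$ from cases~(ii) and~(iii) of the definition then forces both numerator and denominator to converge, giving the asserted formula and characterizing the law of $X_{[T_1,T_2]}^{\alpha,\beta,(g^-,g^+)}$ as the $e^{-\frac12 N_{[T_1,T_2]}}$-tilt of the Bessel bridge $W_{[T_1,T_2]}^{\alpha,\beta,(g^-,g^+)}$.

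I expect the main obstacle to be twofold. First, one must verify that the weak limit is insensitive to the two shrinking rates $\eta^\pm(\varepsilon)$, so that the limiting object coincides with the symmetrically-defined $W_{[T_1,T_2]}^{\alpha,\beta,(g^-,g^+)}$; in the one-sided Bessel-bridge regime this robustness should follow from the convergence machinery prepared in Section~2, but it must be stated carefully. Second, one must ensure $E[\widehat 1(W_{[T_1,T_2]}^{\alpha,\beta,(g^-,g^+)})]=E[e^{-\frac12 N_{[T_1,T_2]}(W_{[T_1,T_2]}^{\alpha,\beta,(g^-,g^+)})}]>0$ so that the limiting ratio is well defined, which is immediate since the integrand is strictly positive and the Bessel bridge is a genuine probability law. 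Finally, the Markov property of $X_{[T_1,T_2]}^{\alpha,\beta,(g^-,g^+)}$ follows because $e^{-\frac12 N_{[T_1,T_2]}(w)}$ is multiplicative in time, as $N_{[T_1,s]}(w)+N_{[s,T_2]}(w)=N_{[T_1,T_2]}(w)$, so the change of measure is a multiplicative-functional transform of the Markov process $W_{[T_1,T_2]}^{\alpha,\beta,(g^-,g^+)}$ and preserves the Markov property.
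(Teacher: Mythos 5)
Your proposal is correct, and its core is the same route the paper takes: the paper proves this proposition ``similarly to Theorem \ref{dif_hm}'', i.e., it uses the Girsanov-type Lemmas \ref{girsanov} and \ref{girsanov_func} (your Cameron--Martin/It\^o computation re-derives Lemma \ref{girsanov} rather than citing it) to turn the conditioned diffusion-bridge expectation into the ratio of $\widehat{\cdot}$-weighted expectations over $W_{[T_1,T_2]}^{\alpha\to\beta}\vert_{K_{[T_1,T_2]}(g^--\eta^-(\varepsilon),\,g^++\eta^+(\varepsilon))}$, and then invokes the weak convergence of this conditioned Brownian bridge to the Bessel bridge between the two curves from \cite{ishitani_hatakenaka_suzuki}. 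Your truncation of $\widehat F$ on a fixed enlarged tube is the same device the paper handles implicitly via the bound $e^{-\frac12 N_{[s,t]}(w)}\le e^{c_\mu(t-s)}$ for tube-valued paths, and your two flagged obstacles (insensitivity to the rates $\eta^\pm(\varepsilon)$ versus the symmetric tubes in the definition of cases (ii)--(iii), and strict positivity of the denominator) are exactly the points the paper leaves to the cited convergence results. The one place you genuinely depart from the paper is the Markov property: the paper's template (the proof of Theorem \ref{dif_hm}) computes the transition densities of the conditioned diffusion bridge, passes to the limit with Scheff\'e's lemma and dominated convergence, and verifies the Chapman--Kolmogorov equation for the limiting densities, whereas you observe that the limit law is a tilt of the Markov law of $W_{[T_1,T_2]}^{\alpha,\beta,(g^-,g^+)}$ by the multiplicative functional $e^{-\frac12 N_{[T_1,T_2]}}$, which factorizes at every intermediate time $s$ into a past-measurable factor $e^{-\frac12 N_{[T_1,s]}}$ and a future-measurable factor $e^{-\frac12 N_{[s,T_2]}}$, so conditional independence of past and future given $w(s)$ is preserved. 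Your argument is shorter and avoids all density computations; the paper's heavier route has the side benefit of producing the explicit limiting transition densities (the analogues of $h_\mu$), which is the form it reuses elsewhere. Both are sound.
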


Applying Proposition \ref{dif_bessel}, we can prove the decomposition formula for the distribution of the diffusion house-moving $H_{\mu}^{g^- \to g^+}$.

\begin{theorem}
    \label{thm2}
    For every $\mathbb{R}$-valued bounded continuous function $F$ on $C([0,1], \mathbb{R})$, $0 < t < 1$ and $0 < t_1 < t_2 < 1$, it holds that
    \begin{align}
        &E[F(H_{\mu}^{g^- \to g^+})] \notag \\
        &= \int_{g^-(t)}^{g^+(t)} E[F( X_{[0,t]}^{0,y,(g^-,g^+)} \oplus X_{[t,1]}^{y,b,(g^-,g^+)} )] h_{\mu}(t,y) \ \mathrm{d} y \label{path_decomp1} \\
        &= \int_{g^-(t_2)}^{g^+(t_2)} \int_{g^-(t_1)}^{g^+(t_1)} E[F( X_{[0,t_1]}^{0,y_1,(g^-,g^+)} \oplus X_{[t_1,t_2]}^{y_1,y_2,(g^-,g^+)} \oplus X_{[t_2,1]}^{y_2,b,(g^-,g^+)} )] \label{path_decomp2} \\
        &\qquad \qquad \qquad \times h_{\mu}(t_1,y_1) h_{\mu}(t_1,y_1,t_2,y_2) \ \mathrm{d} y_1 \mathrm{d} y_2 , \notag
    \end{align}
    where $X_{[0,t]}^{0,y,(g^-,g^+)}$, $X_{[t,1]}^{y,b,(g^-,g^+)}$, $X_{[0,t_1]}^{0,y_1,(g^-,g^+)}$, and $X_{[t_2,1]}^{y_2,b,(g^-,g^+)}$ are obtained in Proposition \ref{dif_bessel} and $X_{[t_1,t_2]}^{y_1,y_2,(g^-,g^+)}$ is the conditioned process defined as
    \begin{equation}\nonumber
        X_{[t_1,t_2]}^{y_1,y_2,(g^-,g^+)} := X_{[t_1,t_2]}^{y_1 \to y_2}\mid_{K_{[t_1,t_2]}(g^-, g^+)} .
    \end{equation}
    Here, the respective processes that appear in \eqref{path_decomp1} and \eqref{path_decomp2} are independent of each other.
\end{theorem}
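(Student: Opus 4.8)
The plan is to obtain \eqref{path_decomp1} and \eqref{path_decomp2} directly from the Brownian-level decompositions \eqref{dif_hm_decomp1} and \eqref{dif_hm_decomp2} of Theorem \ref{dif_hm}, by tilting each independent block by its Girsanov weight $e^{-\frac{1}{2}N}$ and invoking Proposition \ref{dif_bessel}. The starting observation is that $N_{[t_1,t_2]}(w)=\int_{t_1}^{t_2}\{\mu'(w(u))+\mu^2(w(u))\}\,\mathrm{d}u$ is additive over adjacent intervals and depends on a path only through its restriction; hence for a concatenation $w^{(1)}\oplus w^{(2)}$ one has $N_{[0,1]}(w^{(1)}\oplus w^{(2)})=N_{[0,t]}(w^{(1)})+N_{[t,1]}(w^{(2)})$, so that $\widehat{F}(w^{(1)}\oplus w^{(2)})=e^{-\frac{1}{2}N_{[0,t]}(w^{(1)})}e^{-\frac{1}{2}N_{[t,1]}(w^{(2)})}F(w^{(1)}\oplus w^{(2)})$.

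For the single-point formula I would insert this factorization into \eqref{dif_hm_decomp1}. Since the blocks $W_{[0,t]}^{0,y,(g^-,g^+)}$ and $W_{[t,1]}^{y,b,(g^-,g^+)}$ are independent by Theorem \ref{dif_hm}, tilting by the product weight keeps them independent and factors the joint normalizer into the product of the one-interval normalizers. Both blocks are of boundary type (because $g^-(0)=0$ and $b=g^+(1)$), so Proposition \ref{dif_bessel} applies on each of $[0,t]$ and $[t,1]$, converting each $e^{-\frac{1}{2}N}$-tilted $W$-expectation into the corresponding $X$-expectation divided by $E[e^{-\frac{1}{2}N_{[\cdot]}(W_{[\cdot]})}]$. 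The integrand of \eqref{dif_hm_decomp1} thereby becomes
\[
\frac{E[e^{-\frac{1}{2}N_{[0,t]}(W_{[0,t]}^{0,y,(g^-,g^+)})}]\,E[e^{-\frac{1}{2}N_{[t,1]}(W_{[t,1]}^{y,b,(g^-,g^+)})}]}{E[\widehat{1}(H^{g^-\to g^+})]}\,h(t,y)\,E[F(X_{[0,t]}^{0,y,(g^-,g^+)}\oplus X_{[t,1]}^{y,b,(g^-,g^+)})].
\]
Using $E[\widehat{1}(H^{g^-\to g^+})]=E[e^{-\frac{1}{2}N_{[0,1]}(H^{g^-\to g^+})}]$, the prefactor is exactly $h_\mu(t,y)$ by its definition, yielding \eqref{path_decomp1}.

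For the two-point formula I would repeat the argument from \eqref{dif_hm_decomp2}, splitting $N_{[0,1]}$ additively into $N_{[0,t_1]}+N_{[t_1,t_2]}+N_{[t_2,1]}$ and using mutual independence of the three blocks. The outer blocks are boundary type and handled by Proposition \ref{dif_bessel}. The middle block $W_{[t_1,t_2]}^{y_1,y_2,(g^-,g^+)}$ has both endpoints interior and is \emph{not} covered by Proposition \ref{dif_bessel}; for it I would invoke the direct Cameron--Martin--Girsanov identity between the conditioned $X$-bridge $X_{[t_1,t_2]}^{y_1,y_2,(g^-,g^+)}=X_{[t_1,t_2]}^{y_1\to y_2}\mid_{K_{[t_1,t_2]}(g^-,g^+)}$ and the conditioned Brownian bridge $W_{[t_1,t_2]}^{y_1,y_2,(g^-,g^+)}$, whose density is the Girsanov weight $\exp\{G(y_2)-G(y_1)-\frac{1}{2}N_{[t_1,t_2]}\}$ (the boundary term $G(y_2)-G(y_1)$ is deterministic after pinning and cancels in the ratio, and the constraint $K_{[t_1,t_2]}(g^-,g^+)$ is common to both laws). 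Collecting the three normalizers together with $h(t_1,y_1)h(t_1,y_1,t_2,y_2)$, a short bookkeeping check---the factor $E[e^{-\frac{1}{2}N_{[t_1,1]}(W_{[t_1,1]}^{y_1,b,(g^-,g^+)})}]$ cancels between $h_\mu(t_1,y_1)$ and $h_\mu(t_1,y_1,t_2,y_2)$---identifies the prefactor with $h_\mu(t_1,y_1)h_\mu(t_1,y_1,t_2,y_2)$, giving \eqref{path_decomp2}.

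The only genuinely delicate point is the factorization of the tilted expectation into a product of one-interval normalizers times an expectation over independent tilted pieces: this rests simultaneously on interval-additivity of $N$, on the fact that tilting independent blocks by a product weight preserves independence (so the joint normalizer factors), and---for the two-point case---on supplying the interior--interior change-of-measure identity, which lies outside Proposition \ref{dif_bessel} and must come from the direct Girsanov computation. Everything else is substitution and the algebraic cancellation that reassembles $h_\mu$ from $h$ and the Gaussian normalizers.
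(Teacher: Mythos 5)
Your proof is correct, but it follows a genuinely different route from the paper's. The paper works at finite $\varepsilon$: starting from $E[F(H_{\mu}^{g^-\to g^+})]=\lim_{\varepsilon\downarrow 0}I_X(\varepsilon,F)/I_X(\varepsilon,1)$ (Theorem \ref{dif_hm}), it expands $I_X(\varepsilon,F)$ by the Markov property of the diffusion bridge at time $t$ in the $\varepsilon$-broadened corridor, converts all diffusion quantities into tilted Brownian ones via Lemma \ref{girsanov}, and only then lets $\varepsilon\downarrow 0$, using the convergence of the Brownian factors to $h(t,y)$ (from the proof of Theorem 3.1 in \cite{ishitani_hatakenaka_suzuki}), the weak convergence of the $\varepsilon$-corridor conditioned diffusion blocks supplied by Proposition \ref{dif_bessel}, and an implicit dominated-convergence interchange. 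You never touch $\varepsilon$: you post-process the limit formulas \eqref{dif_hm_decomp1}--\eqref{dif_hm_decomp2}, factor the weight $e^{-\frac{1}{2}N_{[0,1]}}$ across the independent blocks by interval-additivity, and identify each normalized tilted block with the corresponding diffusion process --- via the second identity of Proposition \ref{dif_bessel} for the boundary blocks, and, for the interior-interior middle block of \eqref{path_decomp2}, via the conditioned-bridge change of measure, which is exactly the paper's Lemma \ref{girsanov_func} applied with $A=K_{[t_1,t_2]}(g^-,g^+)$, so nothing outside the paper's toolkit is required. Your bookkeeping is also right: the factor $E[e^{-\frac{1}{2}N_{[t_1,1]}(W_{[t_1,1]}^{y_1,b,(g^-,g^+)})}]$ indeed cancels between $h_{\mu}(t_1,y_1)$ and $h_{\mu}(t_1,y_1,t_2,y_2)$. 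What your route buys is brevity and a purely algebraic argument, since every limit interchange has already been absorbed into Theorem \ref{dif_hm}; what it costs is two small points that should be stated explicitly: (i) the identities of Proposition \ref{dif_bessel} and Lemma \ref{girsanov_func}, proved for bounded continuous test functions, determine the laws and therefore extend to bounded measurable functionals such as $w^{(1)}\mapsto F(w^{(1)}\oplus w^{(2)})$, which is continuous only on the subspace of paths matching at the gluing time (where the relevant laws are supported); and (ii) the Fubini step showing that product tilting of independent blocks yields independent tilted blocks with a factored normalizer. Both are routine. The paper's route, in exchange for redoing the limit analysis, exhibits the decomposition already at the pre-limit level.
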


We also construct a stochastic process called ``diffusion meander'' $X_{[0,T]}^{0,(g^-,g^+)}$ as the weak limit of diffusion process conditioned to stay between two curves.

\begin{prop}
    \label{dif_mea}
    Let $T > 0$.
    Then, there exists an $\mathbb{R}$-valued continuous Markov process $X_{[0,T]}^{0,(g^-,g^+)} = \{ X_{[0,T]}^{0,(g^-,g^+)}(t) \}_{t \in [0,T]}$ that satisfies
    \begin{align}
        &E[ F(X_{[0,T]}^{0,(g^-,g^+)}) ] \notag \\
        &= \lim_{\varepsilon \downarrow 0} E[ F( X_{[0,T]}\mid_{K(g^- - \eta^-(\varepsilon), g^+)} ) ] \\
        &= \frac{ E[ \acute{F}(W_{[0,T]}^{0,(g^-,g^+)}) ] }{ E[ \acute{1}(W_{[0,T]}^{0,(g^-,g^+)}) ] } \\
        &= \int_{g^-(t)}^{g^+(t)} \frac{E[ \acute{F}( W_{[0,t]}^{0,y,(g^-,g^+)} \oplus W_{[t,T]}^{y,(g^-,g^+)} ) ]}{E[ \acute{1}(W_{[0,T]}^{0,(g^-,g^+)}) ]} k(t,y) \ \mathrm{d} y \label{dif_mea_decomp1} \\
        &= \int_{g^-(t_2)}^{g^+(t_2)} \int_{g^-(t_1)}^{g^+(t_1)} \frac{E[ \acute{F}( W_{[0,t_1]}^{0,y_1,(g^-,g^+)} \oplus W_{[t_1,t_2]}^{y_1,y_2,(g^-,g^+)} \oplus W_{[t_2,T]}^{y_2,(g^-,g^+)} ) ]}{E[ \acute{1}(W_{[0,T]}^{0,(g^-,g^+)}) ]} \label{dif_mea_decomp2} \\
        &\quad \qquad \qquad \qquad \qquad \times k(t_1,y_1) k(t_1,y_1,t_2,y_2) \ \mathrm{d} y_1 \mathrm{d} y_2 \notag
    \end{align}
    for every $\mathbb{R}$-valued bounded continuous function $F$ on $C([0,T], \mathbb{R})$, $0 < t < T$ and $0 < t_1 < t_2 < T$, where the respective processes that appear in \eqref{dif_mea_decomp1} and \eqref{dif_mea_decomp2} are independent of each other and $\acute{F}$ is defined as
    \begin{equation}\nonumber
        \acute{F}(w) := e^{G(w(T))} e^{-\frac{1}{2} N_{[0,T]}(w)} F(w), \ w \in C([0,T] , \mathbb{R}) .
    \end{equation}
    Moreover, for $0 < t < T$, $0 < t_1 < t_2 < T$, $y \in (g^-(t), g^+(t))$ and $y_i \in (g^-(t_i), g^+(t_i))$~$(i = 1, 2)$, the law of $X_{[0,T]}^{0,(g^-,g^+)}$ is given by
    \begin{equation}\nonumber
        \begin{aligned}
            &P(X_{[0,T]}^{0,(g^-,g^+)}(t) \in \mathrm{d} y) \\
            &= \frac{ E[e^{-\frac{1}{2}N_{[0,t]}(W_{[0,t]}^{0,y,(g^-,g^+)})}] E[ e^{G(W_{[t,T]}^{y,(g^-,g^+)}(T))} e^{-\frac{1}{2}N_{[t,T]}(W_{[t,T]}^{y,(g^-,g^+)})}] }{ E[  e^{G(W_{[0,T]}^{0,(g^-,g^+)}(T))} e^{-\frac{1}{2}N_{[0,T]}(W_{[0,T]}^{0,(g^-,g^+)})} ] } k(t,y)  \\
            &=: k_{\mu}(t,y)
        \end{aligned}
    \end{equation}
    \begin{equation}\nonumber
        \begin{aligned}
            &P( X_{[0,T]}^{0,(g^-,g^+)}(t_2) \in \mathrm{d} y_2  \mid  X_{[0,T]}^{0,(g^-,g^+)}(t_1) = y_1 ) \\
            &= \frac{ E[e^{-\frac{1}{2}N_{[t_1,t_2]}(W_{[t_1,t_2]}^{y_1,y_2,(g^-,g^+)})}] E[ e^{G(W_{[t_2,T]}^{y_2,(g^-,g^+)}(T))} e^{-\frac{1}{2}N_{[t_2,T]}(W_{[t_2,T]}^{y_2,(g^-,g^+)})}] }{E[ e^{G(W_{[t_1,T]}^{y_1,(g^-,g^+)}(T))} e^{-\frac{1}{2}N_{[t_1,T]}(W_{[t_1,T]}^{y_1,(g^-,g^+)})} ]} k(t_1,y_1,t_2,y_2) \\
            &=: k_{\mu}(t_1,y_1,t_2,y_2)
        \end{aligned}
    \end{equation}
    where $k(t,y)$ and $k(t_1,y_1,t_2,y_2)$ are transition densities for Brownian meander $W_{[0,T]}^{0,(g^-,g^+)}$.
\end{prop}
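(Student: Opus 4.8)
The plan is to reduce the diffusion statement to the already-constructed Brownian meander between two curves by a Cameron--Martin--Girsanov change of measure, following the same strategy as the proof of Theorem \ref{dif_hm}. First I would record the Radon--Nikodym derivative of the law of $X_{[0,T]}$ with respect to that of $W_{[0,T]}$. By Girsanov's theorem it equals $\exp\{\int_0^T \mu(W(u))\,\mathrm{d}W(u) - \frac12\int_0^T \mu^2(W(u))\,\mathrm{d}u\}$, and applying It\^o's formula to $G(W(T))$ together with $G(0)=0$ rewrites the stochastic integral as $G(W(T)) - \frac12\int_0^T \mu'(W(u))\,\mathrm{d}u$. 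Hence the weight is exactly $\acute{1}(W_{[0,T]}) = e^{G(W(T))}e^{-\frac12 N_{[0,T]}(W_{[0,T]})}$, which explains why the free right endpoint produces the extra factor $e^{G(w(T))}$ that is absent in the bridge case (there $W(T)=b$ is fixed and $e^{G(b)}$ cancels). Consequently, for any bounded continuous $F$ and with $K_\varepsilon := K(g^- - \eta^-(\varepsilon), g^+)$,
\begin{align*}
E[ F( X_{[0,T]}\mid_{K_\varepsilon} ) ]
&= \frac{ E[ \acute{F}(W_{[0,T]}) \,;\, W_{[0,T]} \in K_\varepsilon ] }{ E[ \acute{1}(W_{[0,T]}) \,;\, W_{[0,T]} \in K_\varepsilon ] } \\
&= \frac{ E[ \acute{F}(W_{[0,T]}\mid_{K_\varepsilon}) ] }{ E[ \acute{1}(W_{[0,T]}\mid_{K_\varepsilon}) ] },
\end{align*}
the second equality following because the normalizing factor $P(W_{[0,T]} \in K_\varepsilon)$ cancels.

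Next I would pass to the limit $\varepsilon \downarrow 0$. By the construction of the Brownian meander between two curves (case (vii) of the definition and \cite{ishitani_hatakenaka_suzuki}), $W_{[0,T]}\mid_{K_\varepsilon}$ converges in distribution to $W_{[0,T]}^{0,(g^-,g^+)}$. Since all these conditioned laws are supported on the fixed compact band $K(g^- - \eta^-(1), g^+)$ and $\mu,\mu',G$ are continuous, the functional $\acute{F}$ is bounded and continuous on that band; truncating $\acute{F}$ to a genuinely bounded continuous functional on $C([0,T],\mathbb{R})$ that agrees with it on the band lets me apply the portmanteau theorem and obtain the second displayed equality of the proposition, with $E[\acute{1}(W_{[0,T]}^{0,(g^-,g^+)})]>0$ guaranteeing a well-defined limit. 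This simultaneously yields existence of $X_{[0,T]}^{0,(g^-,g^+)}$ and the first two equalities.

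For the decomposition formulae \eqref{dif_mea_decomp1} and \eqref{dif_mea_decomp2}, I would invoke the corresponding Markov decomposition of the Brownian meander between two curves at the intermediate time(s), which splits $W_{[0,T]}^{0,(g^-,g^+)}$ into an independent bridge-between-curves piece $W_{[0,t]}^{0,y,(g^-,g^+)}$ and a meander-between-curves piece $W_{[t,T]}^{y,(g^-,g^+)}$, the junction value $y$ carrying the meander transition density $k(t,y)$. The additivity $N_{[0,T]}=N_{[0,t]}+N_{[t,T]}$ together with the fact that $G(w(T))$ depends only on the terminal piece factors the weight as $\acute{F}(w_1\oplus w_2)=e^{G(w_2(T))}e^{-\frac12 N_{[0,t]}(w_1)}e^{-\frac12 N_{[t,T]}(w_2)}F(w_1\oplus w_2)$ across the concatenation, which, after dividing by $E[\acute{1}(W_{[0,T]}^{0,(g^-,g^+)})]$, gives \eqref{dif_mea_decomp1}; the two-time version \eqref{dif_mea_decomp2} follows identically. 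Specializing $F$ to a function of $w(t)$ (respectively of $w(t_1),w(t_2)$) and using the independence of the pieces then reads off $k_{\mu}(t,y)$ and $k_{\mu}(t_1,y_1,t_2,y_2)$ exactly as displayed, and the two-time decomposition exhibits the conditional independence of the pre-$t_1$, middle, and post-$t_2$ segments given the junction values, which establishes the Markov property.

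The step I expect to be the main obstacle is the interchange of the limit $\varepsilon \downarrow 0$ with the expectation of $\acute{F}$: although $\acute{F}$ is bounded on each fixed band, one must argue carefully that the conditioned laws do not leak mass and that the truncation does not affect the limit, i.e.\ that the weak convergence of the conditioned Brownian motions upgrades to convergence of the $\acute{F}$- and $\acute{1}$-integrals. This is precisely where the uniform confinement to $K(g^- - \eta^-(1), g^+)$ and the continuity of $\mu,\mu',G$ must be combined, mirroring the analogous delicate step in the proof of Theorem \ref{dif_hm}.
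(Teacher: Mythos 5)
Your proposal is correct in its essentials and, for the core identities, follows the same route as the paper: your Girsanov-plus-It\^o computation of the weight $e^{G(w(T))}e^{-\frac12 N_{[0,T]}(w)}$ is exactly the content of Lemmas \ref{girsanov2} and \ref{girsanov_func2} (which the paper cites from Downes--Borovkov rather than rederiving), giving $E[F(X_{[0,T]}\mid_{K(g^--\eta^-(\varepsilon),g^+)})]=I_W(\varepsilon,\acute F)/I_W(\varepsilon,\acute 1)$; the limit $\varepsilon\downarrow 0$ is then taken via the weak convergence of the conditioned Brownian motion to $W_{[0,T]}^{0,(g^-,g^+)}$ from the prior work (the paper splits the ratio as $\frac{I_W(\varepsilon,\acute F)}{I_W(\varepsilon,1)}\cdot\frac{I_W(\varepsilon,1)}{I_W(\varepsilon,\acute 1)}$, which is the same confinement-to-the-band argument you make via truncation); and the decomposition formulae and the densities $k_\mu$ come, as in the paper, from the path decomposition of the Brownian meander between the two curves together with additivity of $N$ and the fact that $G(w(T))$ sees only the terminal piece.

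Where you genuinely deviate is the Markov property, and this is the one place your write-up is thinner than what it replaces. The paper proves Markovianity by a limiting argument: it introduces the transition densities $k_\mu(s,x,t,y;\varepsilon)$ of the conditioned diffusion (which is Markov by Proposition A.1 of the cited work), writes the normalization and Chapman--Kolmogorov identities \eqref{markov3}--\eqref{markov4} at level $\varepsilon$, relates $k_\mu(\cdot;\varepsilon)$ to $k(\cdot;\varepsilon)$ through the ratio $\psi(s,x,t,y;\varepsilon)$, and passes these identities to the limit using the uniform bound $e^{-\frac12 N_{[s,t]}(w)}\le e^{c_\mu(t-s)}$, Scheff\'e's lemma, and dominated convergence. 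You instead read Markovianity off the limiting decomposition: since reweighting a concatenation of independent pieces by a product of functionals, each depending on only one piece, preserves independence after normalization, formula \eqref{dif_mea_decomp1} (valid for every $t$) yields conditional independence of $\pi_{[0,t]}X_{[0,T]}^{0,(g^-,g^+)}$ and $\pi_{[t,T]}X_{[0,T]}^{0,(g^-,g^+)}$ given $X_{[0,T]}^{0,(g^-,g^+)}(t)$, which is equivalent to the Markov property. This idea is sound and shorter, but as stated it occupies a single clause; to be complete you should display the factorization for product test functionals $F(w)=F_1(\pi_{[0,t]}w)F_2(\pi_{[t,T]}w)$, check that the mixing variable in the resulting mixture of product laws is indeed $X_{[0,T]}^{0,(g^-,g^+)}(t)$ with density $k_\mu(t,y)$, and invoke the equivalence of ``past and future conditionally independent given the present at every time'' with Markovianity. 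What the paper's heavier route buys is an explicit verification that the limiting kernels $k_\mu$ satisfy Chapman--Kolmogorov (your route obtains this only a posteriori) and an argument that is reused verbatim from the proof of Theorem \ref{dif_hm}; what your route buys is the complete avoidance of the Scheff\'e/dominated-convergence machinery.
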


Applying Proposition \ref{dif_mea}, we obtain the Radon--Nikodym derivative of $\pi_{[0,t]} \circ H_{\mu}^{g^- \to g^+}$ with respect to $X_{[0,t]}^{0,(g^-,g^+)}$.

\begin{theorem}
    \label{hm_mea}
    Let $t \in (0,1)$.
    Then, we have
    \begin{equation}\label{hm_mea_radon-nikodym}
        \begin{aligned}
            &\frac{ \mathrm{d} \left( P \circ \left( \pi_{[0,t]} \circ H_{\mu}^{g^- \to g^+} \right)^{-1} \right) }{ \mathrm{d} \left( P \circ \left(  X_{[0,t]}^{0,(g^-,g^+)} \right)^{-1} \right) } (w) \\
            &= e^{-G(w(t))} \frac{ E[e^{G(W_{[0,t]}^{0,(g^-,g^+)}(t))} e^{-\frac{1}{2}N_{[0,t]}(W_{[0,t]}^{0,(g^-,g^+)})} ] E[e^{-\frac{1}{2}N_{[t,1]}(W_{[t,1]}^{w(t),b,(g^-,g^+)}) }] }{ E[e^{-\frac{1}{2}N_{[0,1]}(H^{g^- \to g^+})}] } \\
            &\times \frac{ E\left[\widetilde{Z}_{[0,t]}^{g^-} \left( W_{[0,t]}^+\mid_{K_{[0,t]}^-(g^+ - g^-)} \right)^{-1} \right] P(W_{[0,t]}^+ \in K_{[0,t]}^-(g^+ - g^-))  q_{[t,1]}^{(g^-,g^+),(\downarrow)}(w(t)) }{ C_{g^-,g^+} \sqrt{t} \sqrt{1-t} }
        \end{aligned}
    \end{equation}
\end{theorem}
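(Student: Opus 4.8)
The plan is to realize both laws as absolutely continuous with respect to the law of the Brownian meander between the two curves $W_{[0,t]}^{0,(g^-,g^+)}$, to read off each density, and then to obtain the claimed Radon--Nikodym derivative by the chain rule. The first ingredient is the density of the diffusion meander with respect to the Brownian meander. Specializing Proposition \ref{dif_mea} to $T=t$ and reading off the stated expression for the law of $X_{[0,t]}^{0,(g^-,g^+)}$ (which is $\acute{1}(W_{[0,t]}^{0,(g^-,g^+)})$-weighting over its normalization) gives
\[
\frac{\mathrm{d}\left(P\circ (X_{[0,t]}^{0,(g^-,g^+)})^{-1}\right)}{\mathrm{d}\left(P\circ (W_{[0,t]}^{0,(g^-,g^+)})^{-1}\right)}(w) = \frac{e^{G(w(t))}\, e^{-\frac12 N_{[0,t]}(w)}}{E\!\left[ e^{G(W_{[0,t]}^{0,(g^-,g^+)}(t))}\, e^{-\frac12 N_{[0,t]}(W_{[0,t]}^{0,(g^-,g^+)})}\right]}.
\]

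Next I would compute the density of $\pi_{[0,t]}\circ H_{\mu}^{g^-\to g^+}$ with respect to $W_{[0,t]}^{0,(g^-,g^+)}$. Taking $F=\Phi\circ\pi_{[0,t]}$ with $\Phi$ bounded continuous on $C([0,t],\mathbb{R})$ in the decomposition \eqref{dif_hm_decomp1}, and using the additivity $N_{[0,1]}=N_{[0,t]}+N_{[t,1]}$ together with the independence of the two $\oplus$-factors, the right-hand side collapses to
\[
\int_{g^-(t)}^{g^+(t)} E\!\left[ e^{-\frac12 N_{[0,t]}(W_{[0,t]}^{0,y,(g^-,g^+)})}\,\Phi(W_{[0,t]}^{0,y,(g^-,g^+)})\right] \frac{E\!\left[e^{-\frac12 N_{[t,1]}(W_{[t,1]}^{y,b,(g^-,g^+)})}\right] h(t,y)}{E\!\left[e^{-\frac12 N_{[0,1]}(H^{g^-\to g^+})}\right]}\,\mathrm{d} y.
\]
Disintegrating $W_{[0,t]}^{0,(g^-,g^+)}$ over its endpoint---conditionally on $W_{[0,t]}^{0,(g^-,g^+)}(t)=y$ it is $W_{[0,t]}^{0,y,(g^-,g^+)}$, and the endpoint carries density $k(t,y)$---and inserting a factor $k(t,y)/k(t,y)$, this integral becomes an expectation against $W_{[0,t]}^{0,(g^-,g^+)}$, so that
\[
\frac{\mathrm{d}\left(P\circ (\pi_{[0,t]}\circ H_{\mu}^{g^-\to g^+})^{-1}\right)}{\mathrm{d}\left(P\circ (W_{[0,t]}^{0,(g^-,g^+)})^{-1}\right)}(w) = e^{-\frac12 N_{[0,t]}(w)}\, \frac{E\!\left[e^{-\frac12 N_{[t,1]}(W_{[t,1]}^{w(t),b,(g^-,g^+)})}\right]}{E\!\left[e^{-\frac12 N_{[0,1]}(H^{g^-\to g^+})}\right]}\,\frac{h(t,w(t))}{k(t,w(t))}.
\]

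Finally I would divide these two densities. The $e^{-\frac12 N_{[0,t]}(w)}$ factors cancel; inverting the $e^{G}$-weight produces the prefactor $e^{-G(w(t))}$ together with the normalization $E[e^{G(W_{[0,t]}^{0,(g^-,g^+)}(t))}e^{-\frac12 N_{[0,t]}(W_{[0,t]}^{0,(g^-,g^+)})}]$, and the $N_{[t,1]}$-endpoint factor over $E[e^{-\frac12 N_{[0,1]}(H^{g^-\to g^+})}]$ appears exactly as in the statement. It then remains to identify the ratio $h(t,w(t))/k(t,w(t))$ with the stated Bessel/meander factor. Substituting $h(t,y)=(C_{g^-,g^+})^{-1} t^{-1/2} q_{[0,t]}^{(g^-,g^+),(\uparrow)}(y)\,(1-t)^{-1/2}q_{[t,1]}^{(g^-,g^+),(\downarrow)}(y)$ cancels the $q^{(\downarrow)}$ and $(1-t)^{-1/2}$ already present and reduces the whole task to the single identity
\[
k(t,y) = \frac{q_{[0,t]}^{(g^-,g^+),(\uparrow)}(y)}{E\!\left[\widetilde{Z}_{[0,t]}^{g^-}\!\left(W_{[0,t]}^+\mid_{K_{[0,t]}^-(g^+-g^-)}\right)^{-1}\right] P\!\left(W_{[0,t]}^+\in K_{[0,t]}^-(g^+-g^-)\right)},
\]
an explicit expression for the endpoint density of the Brownian meander between the two curves.

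I expect this last identification to be the main obstacle. Both the disintegration used above and the displayed formula for $k(t,y)$ are statements about the \emph{Brownian} objects, and they rest on the construction of the Brownian meander between two curves and on the correspondence between the Brownian meander $W_{[0,t]}^+$ and three-dimensional Bessel bridges from \cite{ishitani_hatakenaka_suzuki}; concretely, one unwinds the definition of $q_{[0,t]}^{(g^-,g^+),(\uparrow)}$ through the weight $\widetilde{Z}_{[0,t]}^{g^--g^-(0)}=\widetilde{Z}_{[0,t]}^{g^-}$ (equal since $g^-(0)=0$) and the meander-endpoint/Bessel-bridge representation, after which the indicated cancellation with the meander-endpoint density becomes transparent. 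Once this identity is in hand, the remaining computation is routine algebra.
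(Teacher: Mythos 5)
Your proposal is correct in outline, but it takes a genuinely different route from the paper. The paper never leaves the $\varepsilon$-level: it applies the Markov property to $X^{0 \to b}$, uses Lemmas \ref{girsanov} and \ref{girsanov2} to rewrite every diffusion quantity as a weighted Brownian one, quotes from \cite{ishitani_hatakenaka_suzuki} the small-$\varepsilon$ asymptotics of the three non-exit probabilities (namely $P( W_{[0,t]} \in K_{[0,t]}(g^- - \eta^-(\varepsilon), g^+ + \eta^+(\varepsilon)) )/\eta^-(\varepsilon)$, $P( W_{[t,1]}^{w(t) \to b} \in K_{[t,1]}(\cdots) ) p_W(1-t,w(t),b)/\eta^+(\varepsilon)$, and $P( W^{0 \to b} \in K(\cdots) ) p_W(1,0,b)/(\eta^-(\varepsilon)\eta^+(\varepsilon))$), and only then passes to the limit against $P( X_{[0,t]}\mid_{K_{[0,t]}(\cdots)} \in \mathrm{d} w )$ using the weak convergence to the diffusion meander. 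You instead work entirely with the limit objects: Theorem \ref{dif_hm} for the numerator law, Proposition \ref{dif_mea} with $T=t$ for the density of $X_{[0,t]}^{0,(g^-,g^+)}$ with respect to $W_{[0,t]}^{0,(g^-,g^+)}$ (the same formula the paper records at the start of its proof of Corollary \ref{abscont}), and the chain rule, reducing the theorem to the single Brownian identity $k(t,y) = q_{[0,t]}^{(g^-,g^+),(\uparrow)}(y)\big/\big(E[\widetilde{Z}_{[0,t]}^{g^-}( W_{[0,t]}^+\mid_{K_{[0,t]}^-(g^+-g^-)} )^{-1}] P(W_{[0,t]}^+ \in K_{[0,t]}^-(g^+-g^-))\big)$; your algebra from that point is right and reproduces \eqref{hm_mea_radon-nikodym} exactly. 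What your route buys is a short, purely measure-theoretic argument with no limit interchanges; what it costs is two facts about the two-curve Brownian meander that this paper itself never states: (i) the terminal-time disintegration — your $k(t,y)$ is $P(W_{[0,t]}^{0,(g^-,g^+)}(t) \in \mathrm{d} y)/\mathrm{d} y$ and the conditional law given the endpoint must be identified with $W_{[0,t]}^{0,y,(g^-,g^+)}$, whereas the paper's $k$ is defined only at interior times $t<T$ — and (ii) the displayed endpoint-density formula. Both are of the same nature as, and derivable from, the asymptotics the paper quotes (divide the asymptotic of $P(W_{[0,t]} \in K_{[0,t]}(\cdots), W_{[0,t]}(t) \in \mathrm{d} y)$ by that of $P(W_{[0,t]} \in K_{[0,t]}(\cdots))$; the normalization is consistent because integrating $q_{[0,t]}^{(g^-,g^+),(\uparrow)}$ in $y$ reproduces the denominator via the meander-endpoint/Bessel-bridge correspondence), so you have flagged exactly the right obstacle; your proof is complete once those two citations to \cite{ishitani_hatakenaka_suzuki} are pinned down, while the paper's longer $\varepsilon$-level route remains self-contained given only the asymptotics it quotes.
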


\begin{cor}
    \label{abscont}
    We can also prove that the distribution of $X_{[0,t]}^{0,(g^-,g^+)}$ is absolutely continuous with respect to $R_{[0,t]} + g^-$.
    Hence, we have
    \begin{equation}\label{abscont_consistency}
        \begin{aligned}
            &\frac{ \mathrm{d} \left( P \circ \left( \pi_{[0,t]} \circ H_{\mu}^{g^- \to g^+} \right)^{-1} \right) }{ \mathrm{d} \left( P \circ \left( R_{[0,t]} + g^- \right)^{-1} \right) }(w) \\
            &= \frac{ \mathrm{d} \left( P \circ \left( \pi_{[0,t]} \circ H_{\mu}^{g^- \to g^+} \right)^{-1} \right) }{ \mathrm{d} \left( P \circ \left( X_{[0,t]}^{0,(g^-,g^+)} \right)^{-1} \right) } (w)
            \frac{ \mathrm{d} \left( P \circ \left( X_{[0,t]}^{0,(g^-,g^+)} \right)^{-1} \right) }{ \mathrm{d} \left( P \circ \left( R_{[0,t]} + g^- \right)^{-1} \right) } (w) .
        \end{aligned}
    \end{equation}
\end{cor}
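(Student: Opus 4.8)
The plan is to prove the two assertions of the corollary in turn: first that the law of the diffusion meander $X_{[0,t]}^{0,(g^-,g^+)}$ is absolutely continuous with respect to that of $R_{[0,t]} + g^-$, and then that the chain rule \eqref{abscont_consistency} holds. The second part will be an automatic consequence of measure theory once both absolute continuities are in hand, so the substantive content is the first assertion, which I would obtain in two layers.

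Applying Proposition \ref{dif_mea} with $T = t$, the representation $E[F(X_{[0,t]}^{0,(g^-,g^+)})] = E[\acute{F}(W_{[0,t]}^{0,(g^-,g^+)})]/E[\acute{1}(W_{[0,t]}^{0,(g^-,g^+)})]$ shows that the law of $X_{[0,t]}^{0,(g^-,g^+)}$ is absolutely continuous with respect to that of the Brownian meander between the two curves $W_{[0,t]}^{0,(g^-,g^+)}$, with density
\begin{equation}\nonumber
\frac{\mathrm{d}\left(P\circ(X_{[0,t]}^{0,(g^-,g^+)})^{-1}\right)}{\mathrm{d}\left(P\circ(W_{[0,t]}^{0,(g^-,g^+)})^{-1}\right)}(w) = \frac{e^{G(w(t))}\,e^{-\frac{1}{2}N_{[0,t]}(w)}}{E[\acute{1}(W_{[0,t]}^{0,(g^-,g^+)})]}.
\end{equation}
It then remains to establish the purely Brownian absolute continuity $W_{[0,t]}^{0,(g^-,g^+)} \ll R_{[0,t]} + g^-$, after which composing the two Radon--Nikodym derivatives yields $X_{[0,t]}^{0,(g^-,g^+)} \ll R_{[0,t]} + g^-$ together with an explicit density.

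The Brownian layer is the crux, and I expect it to be the main obstacle. Here $R_{[0,t]} + g^-$ is the natural reference process: it starts at $g^-(0) = 0$ and stays above $g^-$, while $W_{[0,t]}^{0,(g^-,g^+)}$ is produced from it by the exponential tilt governed by $Z_{[0,t]}^{g^-}$ (the Cameron--Martin--Girsanov change of drift that turns the three-dimensional Bessel process into the meander pushed off $g^-$) together with the restriction to $K_{[0,t]}^-(g^+)$ that keeps the path below $g^+$. This is the same mechanism already exploited in the proof of Corollary \ref{cor3}; tracking the normalising constants and the endpoint density of $w(t)$ should give
\begin{equation}\nonumber
\frac{\mathrm{d}\left(P\circ(W_{[0,t]}^{0,(g^-,g^+)})^{-1}\right)}{\mathrm{d}\left(P\circ(R_{[0,t]}+g^-)^{-1}\right)}(w) = \frac{\sqrt{\pi/2}\,\sqrt{t}}{E[\widetilde{Z}_{[0,t]}^{g^-}(W_{[0,t]}^+\vert_{K_{[0,t]}^-(g^+-g^-)})^{-1}]\,P(W_{[0,t]}^+\in K_{[0,t]}^-(g^+-g^-))}\cdot\frac{1_{K_{[0,t]}^-(g^+)}(w)}{(w(t)-g^-(t))\,Z_{[0,t]}^{g^-}(w)}.
\end{equation}
This is in essence the $\mu\equiv 0$ content underlying Corollary \ref{cor3}, which is why the corollary asserts that it can \emph{also} be proved; once it is available, the composition above delivers the first assertion.

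Finally, the chain rule \eqref{abscont_consistency} is immediate. Writing $\mu_H := P\circ(\pi_{[0,t]}\circ H_\mu^{g^-\to g^+})^{-1}$, $\mu_X := P\circ(X_{[0,t]}^{0,(g^-,g^+)})^{-1}$ and $\mu_R := P\circ(R_{[0,t]}+g^-)^{-1}$, Theorem \ref{hm_mea} gives $\mu_H \ll \mu_X$ and the first assertion gives $\mu_X \ll \mu_R$, so $\mu_H \ll \mu_R$. Since these are probability measures, hence $\sigma$-finite, the standard chain rule for Radon--Nikodym derivatives yields $\frac{\mathrm{d}\mu_H}{\mathrm{d}\mu_R} = \frac{\mathrm{d}\mu_H}{\mathrm{d}\mu_X}\frac{\mathrm{d}\mu_X}{\mathrm{d}\mu_R}$ $\mu_R$-almost everywhere, which is precisely \eqref{abscont_consistency}. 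As a consistency check one can multiply the density of Theorem \ref{hm_mea} by the density $\mathrm{d}\mu_X/\mathrm{d}\mu_R$ obtained above and recover the density of Corollary \ref{cor3}, the common factors $q_{[t,1]}^{(g^-,g^+),(\downarrow)}(w(t))/(C_{g^-,g^+}\sqrt{1-t})$ and $E[e^{-\frac{1}{2}N_{[t,1]}(W^{w(t),b,(g^-,g^+)})}]/E[e^{-\frac{1}{2}N_{[0,1]}(H^{g^-\to g^+})}]$ cancelling as they should.
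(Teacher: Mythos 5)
Your proposal follows essentially the same route as the paper: factor the claim through the Brownian meander between the two curves via Proposition \ref{dif_mea}, establish the Brownian-level absolute continuity of $W_{[0,t]}^{0,(g^-,g^+)}$ with respect to $R_{[0,t]}+g^-$, and then obtain \eqref{abscont_consistency} by the measure-theoretic chain rule. The only difference is one of detail: where you assert that the Brownian layer ``should'' follow by the mechanism of Corollary \ref{cor3}, the paper makes it explicit by composing the Radon--Nikodym derivative of $W_{[0,t]}^{0,(g^-,g^+)}$ with respect to $W_{[0,t]}^+\vert_{K_{[0,t]}^-(g^+-g^-)}+g^-$ (quoted from \cite{ishitani_hatakenaka_suzuki}) with the Imhof-relation density of the latter with respect to $R_{[0,t]}+g^-$, and the product of these two factors is exactly the density you wrote down.
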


Finally, we study the sample path properties of diffusion house-moving $H_{\mu}^{g^- \to g^+}$ and establish the regularity of its sample path.

\begin{prop}
    \label{hoelder}
    For every $\gamma \in (0,\frac{1}{2})$, the path of $H_{\mu}^{g^- \to g^+}$ on $[0,1]$ is locally H\"older continuous with exponent $\gamma$, i.e.
    \begin{equation}\nonumber
        P\left( \bigcup_{n=1}^{\infty} \left\{ \sup_{\substack{t,s \in [0,1] \\ 0 < \lvert t - s \rvert < 1/n}} \frac{ \left\lvert H_{\mu}^{g^- \to g^+}(t) - H_{\mu}^{g^- \to g^+}(s) \right\rvert }{\lvert t - s \rvert^{\gamma}} < \infty \right\} \right) = 1 .
    \end{equation}
\end{prop}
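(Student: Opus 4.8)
The plan is to deduce the Hölder regularity of $H_{\mu}^{g^- \to g^+}$ entirely from the absolute-continuity relations already established in Corollary \ref{cor3}, Proposition \ref{dif_bessel} and Theorem \ref{thm2}, together with the classical fact that three-dimensional Bessel processes and Bessel bridges have, almost surely, sample paths that are locally Hölder continuous with every exponent $\gamma<1/2$ (a consequence of their representation through Brownian motion and Lévy's modulus of continuity). Write $A_{\gamma}$ for the event appearing in the statement, namely that the path has finite Hölder-$\gamma$ modulus at some scale $1/n$. For continuous paths on the compact interval $[0,1]$ this is precisely the event of global Hölder-$\gamma$ continuity, and it is Borel in $\mathcal{B}(C([0,1],\mathbb{R}))$, since the defining supremum may be taken over rational time pairs by continuity. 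The key structural remark is that $A_{\gamma}$ is therefore preserved under absolute continuity of laws: if $A_{\gamma}$ carries full mass under a dominating law, it carries full mass under any law absolutely continuous with respect to it. The strategy is thus to dominate the law of $H_{\mu}^{g^- \to g^+}$ by laws concentrated on Hölder-$\gamma$ paths.

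First I would reduce the problem to two subintervals. Fix $\gamma \in (0,\tfrac12)$ and $t_0 \in (0,1)$. It suffices to prove the Hölder-$\gamma$ property of $H_{\mu}^{g^- \to g^+}$ separately on $[0,t_0]$ and on $[t_0,1]$: for a continuous function that is Hölder-$\gamma$ on each of these adjacent intervals, splitting $\lvert H_{\mu}^{g^- \to g^+}(t)-H_{\mu}^{g^- \to g^+}(s)\rvert$ at $t_0$ and using $(t-t_0)^{\gamma},(t_0-s)^{\gamma}\le (t-s)^{\gamma}$ yields the Hölder bound on all of $[0,1]$.

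On $[0,t_0]$, Corollary \ref{cor3} gives that $P\circ(\pi_{[0,t_0]}\circ H_{\mu}^{g^- \to g^+})^{-1}$ is absolutely continuous with respect to $P\circ(R_{[0,t_0]}+g^-)^{-1}$. Since $R_{[0,t_0]}$ is a three-dimensional Bessel process and $g^- \in C^2([0,1])$ is Lipschitz, the process $R_{[0,t_0]}+g^-$ has locally Hölder-$\gamma$ paths almost surely, so the Hölder event has full measure under its law; by absolute continuity its complement is null for $\pi_{[0,t_0]}\circ H_{\mu}^{g^- \to g^+}$. On $[t_0,1]$ I would use the one-point decomposition \eqref{path_decomp1} of Theorem \ref{thm2}: choosing $F$ to depend only on the restriction to $[t_0,1]$, the concatenation structure of $\oplus$ gives that the $[t_0,1]$-marginal of $H_{\mu}^{g^- \to g^+}$ equals the mixture $\int_{g^-(t_0)}^{g^+(t_0)} \mathrm{Law}\bigl(X_{[t_0,1]}^{y,b,(g^-,g^+)}\bigr)\, h_{\mu}(t_0,y)\,\mathrm{d}y$. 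Hence it suffices that each $X_{[t_0,1]}^{y,b,(g^-,g^+)}$ have Hölder-$\gamma$ paths almost surely, after which $\int P\bigl(X_{[t_0,1]}^{y,b,(g^-,g^+)}\notin A_{\gamma}\bigr) h_{\mu}(t_0,y)\,\mathrm{d}y=0$ closes the argument. By Proposition \ref{dif_bessel}, the law of $X_{[t_0,1]}^{y,b,(g^-,g^+)}$ (case $g^-(T_1)<\alpha<g^+(T_1),\ \beta=g^{\pm}(T_2)$) is $E[\widehat{F}(W_{[t_0,1]}^{y,b,(g^-,g^+)})]/E[\widehat{1}(W_{[t_0,1]}^{y,b,(g^-,g^+)})]$ with $\widehat{F}(w)=e^{-\frac12 N_{[t_0,1]}(w)}F(w)$; every path of $W_{[t_0,1]}^{y,b,(g^-,g^+)}$ lies in $K_{[t_0,1]}(g^-,g^+)$ and $\mu\in C^1$, so $\mu'+\mu^2$ is bounded on the bounded range of such paths, $N_{[t_0,1]}$ is bounded, and the Radon--Nikodym density is bounded above and below by positive constants. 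Thus the two laws are mutually absolutely continuous, and it remains to observe that $W_{[t_0,1]}^{y,b,(g^-,g^+)}$ is the three-dimensional Bessel bridge between the curves $g^-,g^+$ of case (iii), which by the Brownian analogue of Corollary \ref{cor3} (established in \cite{ishitani_hatakenaka_suzuki}) is absolutely continuous with respect to a genuine BES$(3)$-bridge plus a $C^2$ boundary term and hence is Hölder-$\gamma$.

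Combining the two intervals yields the claim. The hard part will be the behaviour near the terminal time $t=1$, where the path is pinned to the boundary value $b=g^+(1)$: the uniform control there cannot come from Corollary \ref{cor3}, which is valid only on $[0,t]$ with $t<1$, and must instead be extracted via the decomposition of Theorem \ref{thm2}, reducing the terminal segment to a Bessel-bridge-between-curves whose regularity at the pinned endpoint is inherited from the classical BES$(3)$-bridge. A secondary technical point is to verify that $A_{\gamma}$ is Borel and that each absolute continuity is invoked in the correct direction, so that full-measure Hölder sets genuinely transfer to the law of $H_{\mu}^{g^- \to g^+}$.
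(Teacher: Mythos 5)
Your strategy is genuinely different from the paper's: the paper never touches Corollary \ref{cor3}, Theorem \ref{thm2} or Proposition \ref{dif_bessel} here, but instead proves moment estimates for the \emph{Brownian} house-moving (Lemma \ref{moment}), runs a dyadic Borel--Cantelli argument of Kolmogorov--Chentsov type to get H\"older continuity of $H^{g^- \to g^+}$ (Proposition \ref{hoelder2}), and then transfers to $H_{\mu}^{g^- \to g^+}$ in one step, using the absolute continuity of its law with respect to that of $H^{g^- \to g^+}$ coming from Theorem \ref{dif_hm}. Your $[0,t_0]$ half is sound: Corollary \ref{cor3} dominates $P \circ (\pi_{[0,t_0]} \circ H_{\mu}^{g^- \to g^+})^{-1}$ by the law of $R_{[0,t_0]} + g^-$, whose paths are a.s.\ locally H\"older-$\gamma$, and you invoke the absolute continuity in the correct direction.

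The genuine gap is on $[t_0,1]$, at the pinned time $t=1$ --- exactly the point you yourself flag as ``the hard part.'' Your resolution of it rests on the claim that the case (iii) process $W_{[t_0,1]}^{y,b,(g^-,g^+)}$ is absolutely continuous, \emph{on the closed interval including the pinned endpoint}, with respect to a genuine BES($3$)-bridge plus a $C^2$ term, ``by the Brownian analogue of Corollary \ref{cor3} established in \cite{ishitani_hatakenaka_suzuki}.'' But the Brownian analogue of Corollary \ref{cor3} (Theorem 3.2 of \cite{ishitani_hatakenaka_suzuki}, quoted in this paper's proof of Corollary \ref{cor3}) concerns $\pi_{[0,t]} \circ H^{g^- \to g^+}$ with $t<1$, i.e.\ the house-moving restricted \emph{away} from its pinned terminal time, and the dominating process there is a free Bessel process, not a bridge; it gives no control of any of these processes at a time where they are pinned to a curve. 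So the single step that would control the modulus of continuity near $t=1$ is delegated to a result that neither this paper nor the corollary you cite provides. (It is plausible that such a statement can be extracted from the machinery of \cite{ishitani_hatakenaka_suzuki}, since the densities $q_{[t,1]}^{(g^-,g^+),(\downarrow)}$ are built from conditioned BES($3$)-bridges, but as written this is an appeal to an unstated external result, not a proof.) This is precisely the difficulty the paper's argument confronts directly: the moment bound \eqref{moment2},
\begin{equation}\nonumber
E\left[ \left\lvert H^{g^- \to g^+}(1-r) - b \right\rvert^{2m_0} \right] \leq \frac{C_{m_0,g^-,g^+}}{(1-r)^{1-m_0}\, r} ,
\end{equation}
is exactly what tames the dyadic increments adjacent to the terminal time in Proposition \ref{hoelder2}, where no endpoint-regular dominating process is available. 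Two smaller points you should also close: the mixture identity you extract from \eqref{path_decomp1} requires measurability of $y \mapsto P(X_{[t_0,1]}^{y,b,(g^-,g^+)} \in A)$ for Borel $A$, and both there and in your use of Proposition \ref{dif_bessel} you need the standard extension of identities stated for bounded continuous $F$ to indicators of Borel sets such as the H\"older event; both are routine but are part of making the absolute-continuity transfers legitimate.
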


\section{Preliminaries}

For $t > 0$, $x, y \in \mathbb{R}$, let $p_X(t,x,y)$ (resp. $p_W(t,x,y)$) be the transition density of $X$ (resp. $W$) relative to the Lebesgue measure.

For some fixed $\delta > 0$, we set
\begin{equation}\nonumber
    c_{\mu} := 0 \vee \sup_{ y \in \Delta(g^-,g^+;\delta) } \left\{ - \left( \mu'(y) + \mu^2(y) \right) \right\}
\end{equation}
where
\begin{equation}\nonumber
    \Delta(g^-,g^+;\delta) := \left\{ z \in \mathbb{R}  \mid  \min_{u \in [0,1]} g^-(u) - \delta \leq z \leq \max_{u \in [0,1]} g^+(u) + \delta \right\} .
\end{equation}
Since $\Delta(g^-,g^+;\delta)$ is compact and $\mu$ is $C^1$-function, we have $0 \leq c_{\mu} < \infty$.

\subsection{Relation to Brownian motion/bridge}
This subsection states and proves lemmas relating probabilities for the diffusion process/bridge to expectations under the law of the Brownian motion/bridge.
\begin{lem}[cf. Lemma 3.3 in~\cite{downes_borovkov}]
    \label{girsanov}
    For any $A \in \mathcal{B}( C([s,t], \mathbb{R}) )$,
    \begin{equation}\nonumber
        \begin{aligned}
            &P(X^{x \to y}_{[s,t]} \in A) \\
            &= \frac{p_W(t-s,x,y)}{p_X(t-s,x,y)} e^{G(y) - G(x)} E [ e^{-\frac{1}{2}N_{[s,t]}(W^{x \to y}_{[s,t]})} 1_A(W^{x \to y}_{[s,t]}) ] .
        \end{aligned}
    \end{equation}
\end{lem}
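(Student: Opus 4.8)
The plan is to derive the bridge identity from the corresponding unconditioned Girsanov change of measure on $[s,t]$ and then disintegrate with respect to the terminal value. First I would apply Girsanov's theorem to $\mathrm{d}X(t) = \mu(X(t))\,\mathrm{d}t + \mathrm{d}W(t)$ started at $x$: on $[s,t]$ the law of $X^x_{[s,t]}$ is absolutely continuous with respect to that of $W^x_{[s,t]}$, with
\[
\frac{\mathrm{d}P_{X^x}}{\mathrm{d}P_{W^x}} = \exp\left( \int_s^t \mu(W(u))\,\mathrm{d}W(u) - \frac{1}{2}\int_s^t \mu^2(W(u))\,\mathrm{d}u \right).
\]
Since $\mu$ is only $C^1$ and need not be bounded, I expect to justify this by a standard localization argument (the diffusion being non-explosive and the interval finite), so that a global Novikov-type condition is not required.

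Next I would remove the Itô integral using the primitive $G$ with $G' = \mu$. Applying Itô's formula to $G(W(u))$ gives
\[
\int_s^t \mu(W(u))\,\mathrm{d}W(u) = G(W(t)) - G(W(s)) - \frac{1}{2}\int_s^t \mu'(W(u))\,\mathrm{d}u,
\]
so that, using $W(s)=x$, the density rewrites as $e^{G(W(t)) - G(x)}\, e^{-\frac{1}{2}N_{[s,t]}(W^x_{[s,t]})}$; the exponent has collapsed precisely into the functional $N_{[s,t]}$ plus a boundary term depending only on the endpoints. Consequently, for any bounded measurable $f$,
\[
E[f(X^x_{[s,t]})] = e^{-G(x)}\, E\big[ f(W^x_{[s,t]})\, e^{G(W^x(t))}\, e^{-\frac{1}{2}N_{[s,t]}(W^x_{[s,t]})} \big].
\]

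The key step is then to condition both sides on the terminal position. I would test against an arbitrary bounded continuous $h$ by inserting $h(X^x(t))$ on the left and $h(W^x(t))$ on the right, and disintegrate each expectation over the endpoint via the transition densities $p_X(t-s,x,\cdot)$ and $p_W(t-s,x,\cdot)$. This turns the identity into
\[
\begin{aligned}
&\int E[f(X^{x\to y}_{[s,t]})]\, h(y)\, p_X(t-s,x,y)\,\mathrm{d}y \\
&= e^{-G(x)} \int E\big[ f(W^{x\to y}_{[s,t]})\, e^{-\frac{1}{2}N_{[s,t]}(W^{x\to y}_{[s,t]})} \big]\, h(y)\, e^{G(y)}\, p_W(t-s,x,y)\,\mathrm{d}y.
\end{aligned}
\]
Since $h$ is arbitrary, the integrands agree for almost every (hence, by continuity in $y$, every) $y$, and dividing by $p_X(t-s,x,y)$ yields the claim upon taking $f = 1_A$.

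The main obstacle I anticipate is making the disintegration rigorous: one must verify that conditioning on the endpoint commutes with the Girsanov density in the stated way, namely that the endpoint-dependent factor $e^{G(W(t))}$ may be pulled outside the conditional expectation while the path functional $e^{-\frac{1}{2}N_{[s,t]}}$ remains inside and is correctly reweighted into the Brownian-bridge law. This rests on the fact that the regular conditional distribution of $W^x_{[s,t]}$ given $W^x(t) = y$ is exactly the law of $W^{x\to y}_{[s,t]}$, together with the continuity in $y$ needed to upgrade the almost-everywhere identity to every $y$.
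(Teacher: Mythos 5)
The paper itself contains no proof of Lemma \ref{girsanov}: it is quoted directly from Lemma 3.3 of \cite{downes_borovkov}, whose proof is exactly the argument you outline (Girsanov change of measure for unit-diffusion drift, It\^o's formula applied to $G$ to eliminate the stochastic integral and collapse the exponent into $e^{G(W(t))-G(x)}e^{-\frac{1}{2}N_{[s,t]}(W)}$, then disintegration over the terminal value and division by $p_X$). Your proposal is correct and takes essentially that same route, and it rightly flags the genuine technical points — localization/non-explosion in place of Novikov, the identification of the regular conditional law given the endpoint with the bridge law, and the continuity-in-$y$ upgrade from almost every to every $y$ (after which equality for bounded continuous $f$ extends to indicators $1_A$ since both sides are finite measures in $A$).
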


\begin{lem}
    \label{girsanov_func}
    Let $0 \leq t_1 < t_2$.
    For every bounded continuous function $F$ on $C([t_1,t_2] , \mathbb{R})$, and $A \in \mathcal{B}( C([t_1,t_2] , \mathbb{R}) )$, we set
    \begin{equation}\nonumber
        I_W(F ; A) :=  E[ F(W_{[t_1,t_2]}^{a \to b}) ; W_{[t_1,t_2]}^{a \to b} \in A ] ,
    \end{equation}
    \begin{equation}\nonumber
        I_X(F ; A) :=  E[ F(X_{[t_1,t_2]}^{a \to b}) ; X_{[t_1,t_2]}^{a \to b} \in A ] ,
    \end{equation}
    \begin{equation}\nonumber
        \widehat{F}(w) := e^{-\frac{1}{2}N_{[t_1,t_2]}(w)} F(w) .
    \end{equation}
    Then, it holds that
    \begin{equation}\label{girsanov_func_eq0}
        I_X(F ; A) = \frac{ p_W(t_2 - t_1 , a, b) }{ p_X(t_2 - t_1 , a, b) } e^{G(b) - G(a)} I_W( \widehat{F} ; A ) .
    \end{equation}
    In particular, we have
    \begin{equation}\label{girsanov_func_eq1}
        \frac{ I_X(F ; A ) }{ I_X(1 ; A ) } = \frac{ I_W(\widehat{F} ; A ) }{ I_W(\widehat{1} ; A ) } .
    \end{equation}
    for any bounded continuous function $F$ on $C([t_1,t_2] , \mathbb{R})$.
\end{lem}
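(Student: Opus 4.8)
The plan is to deduce Lemma \ref{girsanov_func} from the single-set Girsanov identity of Lemma \ref{girsanov} by reading the latter as an identity of finite measures and then integrating $F$ against it. Write
\[
C := \frac{p_W(t_2-t_1,a,b)}{p_X(t_2-t_1,a,b)}\, e^{G(b)-G(a)}
\]
for the constant appearing in Lemma \ref{girsanov} (note it depends only on $t_1,t_2,a,b$, not on $F$ or $A$), and introduce the two finite Borel measures on $(C([t_1,t_2],\mathbb{R}),\mathcal{B}(C([t_1,t_2],\mathbb{R})))$ given by
\[
\mu_X(B) := P(X_{[t_1,t_2]}^{a \to b} \in B), \qquad
\mu_W(B) := E\big[ e^{-\frac{1}{2}N_{[t_1,t_2]}(W_{[t_1,t_2]}^{a \to b})}\, 1_B(W_{[t_1,t_2]}^{a \to b}) \big].
\]
With this notation, Lemma \ref{girsanov} (applied with $s=t_1$, $t=t_2$, $x=a$, $y=b$) says precisely that $\mu_X(B) = C\,\mu_W(B)$ for every Borel set $B$, i.e.\ $\mu_X = C\,\mu_W$ as measures.

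Granting this measure identity, the first assertion \eqref{girsanov_func_eq0} is immediate: for any bounded measurable $F$ and any $A$,
\[
I_X(F;A) = \int F\, 1_A \,\mathrm{d}\mu_X = C \int F\, 1_A \,\mathrm{d}\mu_W = C\, E\big[ \widehat{F}(W_{[t_1,t_2]}^{a \to b})\, 1_A(W_{[t_1,t_2]}^{a \to b}) \big] = C\, I_W(\widehat{F};A),
\]
where the middle equality uses $\mu_X = C\,\mu_W$ and the last two use $\widehat{F}(w)=e^{-\frac{1}{2}N_{[t_1,t_2]}(w)}F(w)$. If one prefers to avoid integrating directly against the weighted measure $\mu_W$, the same conclusion follows by first checking \eqref{girsanov_func_eq0} for indicators $F = 1_B$ (where it reduces to Lemma \ref{girsanov} applied to the set $A \cap B$), then extending to simple functions by linearity and to bounded measurable $F$ by a monotone-class / bounded-convergence argument; the continuous $F$ in the statement is then a special case.

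The second assertion \eqref{girsanov_func_eq1} follows at once by dividing: applying \eqref{girsanov_func_eq0} once to $F$ and once to the constant function $1$ cancels the common factor $C$, giving
\[
\frac{I_X(F;A)}{I_X(1;A)} = \frac{C\, I_W(\widehat{F};A)}{C\, I_W(\widehat{1};A)} = \frac{I_W(\widehat{F};A)}{I_W(\widehat{1};A)},
\]
which is valid whenever $I_X(1;A)=P(X_{[t_1,t_2]}^{a \to b} \in A)>0$, equivalently $I_W(\widehat{1};A)>0$.

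I do not expect a genuine obstacle here: the entire content is the reinterpretation of Lemma \ref{girsanov} as the measure identity $\mu_X = C\,\mu_W$, after which both claims are routine. The only point requiring a word of care is confirming that $C$ is independent of $F$ and $A$—clear from its explicit form as a ratio of transition densities times the drift factor $e^{G(b)-G(a)}$—so that it factors cleanly out of the integral in \eqref{girsanov_func_eq0} and cancels in the ratio \eqref{girsanov_func_eq1}.
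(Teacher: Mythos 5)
Your proof is correct and follows essentially the same route as the paper: both deduce the identity from Lemma \ref{girsanov} applied to Borel sets (equivalently, your measure identity $\mu_X = C\,\mu_W$) and then extend to bounded continuous $F$ via simple-function approximation and bounded convergence, before dividing to cancel the constant. Your explicit caveat that the ratio \eqref{girsanov_func_eq1} requires $I_X(1;A)>0$ is a minor point of care the paper leaves implicit.
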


\begin{proof}
    By Lemma \ref{girsanov}, we have
    \begin{equation}\nonumber
        I_X( 1_C ; A ) = \frac{ p_W(t_2 - t_1 , a, b) }{ p_X(t_2 - t_1 , a, b) } e^{G(b) - G(a)} E\left[ e^{-\frac{1}{2}N_{[t_1,t_2]}(W_{[t_1,t_2]}^{a \to b})} 1_C(W_{[t_1,t_2]}^{a \to b}) ; W_{[t_1,t_2]}^{a \to b} \in A \right]
    \end{equation}
    for every $C \in \mathcal{B}( C([t_1,t_2] , \mathbb{R}) )$.
    Hence, by using the simple approximation theorem and the bounded convergence theorem, we obtain
    \begin{equation}\nonumber
        \begin{aligned}
            I_X( F ; A ) &= \frac{ p_W(t_2 - t_1 , a, b) }{ p_X(t_2 - t_1 , a, b) } e^{G(b) - G(a)} E\left[ e^{-\frac{1}{2}N_{[t_1,t_2]}(W_{[t_1,t_2]}^{a \to b})} F(W_{[t_1,t_2]}^{a \to b}) ; W_{[t_1,t_2]}^{a \to b} \in A \right] \\
            &= \frac{ p_W(t_2 - t_1 , a, b) }{ p_X(t_2 - t_1 , a, b) } e^{G(b) - G(a)} I_W( \widehat{F} ; A )
        \end{aligned}
    \end{equation}
    for every bounded continuous function $F$ on $C([t_1,t_2] , \mathbb{R})$.
    Thus, the equation \eqref{girsanov_func_eq0} and \eqref{girsanov_func_eq1} hold.
\end{proof}

\begin{lem}[cf. proof of Lemma 3.3 in~\cite{downes_borovkov}]
    \label{girsanov2}
    For any $A \in \mathcal{B}( C([s,t], \mathbb{R}) )$,
    \begin{equation}\nonumber
        \begin{aligned}
            &P(X^{x}_{[s,t]} \in A) \\
            &= e^{- G(x)} E [ e^{G(W^{x}_{[s,t]}(t))} e^{-\frac{1}{2}N_{[s,t]}(W^{x}_{[s,t]})} 1_A(W^{x}_{[s,t]}) ] .
        \end{aligned}
    \end{equation}
\end{lem}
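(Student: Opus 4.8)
The plan is to recognise this as the Cameron--Martin--Girsanov change of measure for the drifted diffusion $X^x_{[s,t]}$ relative to its driving Brownian motion $W^x_{[s,t]}$, and then to collapse the resulting stochastic-integral exponent into closed form using the primitive $G$ of $\mu$. First I would invoke Girsanov's theorem: since $X^x$ solves $\mathrm{d}X = \mu(X)\,\mathrm{d}t + \mathrm{d}W$ with $X(s)=x$, the law of $X^x_{[s,t]}$ on $C([s,t],\mathbb{R})$ is absolutely continuous with respect to the law of $W^x_{[s,t]}$, with Radon--Nikodym density given by the exponential martingale
\[
Z_{[s,t]} = \exp\left( \int_s^t \mu(W^x_{[s,t]}(u))\,\mathrm{d}W^x_{[s,t]}(u) - \frac{1}{2}\int_s^t \mu^2(W^x_{[s,t]}(u))\,\mathrm{d}u \right),
\]
so that $P(X^x_{[s,t]} \in A) = E[\,Z_{[s,t]}\,1_A(W^x_{[s,t]})\,]$ for every $A \in \mathcal{B}(C([s,t],\mathbb{R}))$. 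This is the analogue of Lemma \ref{girsanov}, but for an unpinned terminal value: the ratio $p_W/p_X$ there is here replaced by the terminal factor $e^{G(W^x_{[s,t]}(t))}$ produced below.

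Next I would eliminate the It\^o integral. Because $\mu \in C^1$, the primitive $G(y)=\int_0^y \mu(z)\,\mathrm{d}z$ is $C^2$ with $G'=\mu$ and $G''=\mu'$, so It\^o's formula applied to $G(W^x_{[s,t]})$ gives
\[
G(W^x_{[s,t]}(t)) = G(x) + \int_s^t \mu(W^x_{[s,t]}(u))\,\mathrm{d}W^x_{[s,t]}(u) + \frac{1}{2}\int_s^t \mu'(W^x_{[s,t]}(u))\,\mathrm{d}u ,
\]
using $W^x_{[s,t]}(s)=x$. Solving for the stochastic integral and substituting into $Z_{[s,t]}$ collapses the exponent, by the very definition of $N_{[s,t]}$, to
\begin{align*}
Z_{[s,t]} &= \exp\left( G(W^x_{[s,t]}(t)) - G(x) - \frac{1}{2}\int_s^t \{\mu'(W^x_{[s,t]}(u)) + \mu^2(W^x_{[s,t]}(u))\}\,\mathrm{d}u \right) \\
&= e^{-G(x)}\, e^{G(W^x_{[s,t]}(t))}\, e^{-\frac{1}{2}N_{[s,t]}(W^x_{[s,t]})} .
\end{align*}
Inserting this expression into the Girsanov identity above yields the claimed formula immediately.

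The main obstacle is justifying that $Z_{[s,t]}$ is a genuine martingale, equivalently that the Girsanov change of measure is valid, since $\mu$ is only assumed $C^1$ and hence need not be bounded, so Novikov's condition is not automatic. This is precisely the technical point addressed in the proof of Lemma 3.3 of \cite{downes_borovkov}: I would resolve it via the non-explosivity of the diffusion (assumed throughout), introducing the localizing stopping times $\tau_n := \inf\{u \geq s : |W^x_{[s,t]}(u)| \geq n\}$, establishing the identity on each event $\{\tau_n > t\}$ where $\mu$ and $\mu'$ are bounded, and passing to the limit as $n \to \infty$ by monotone/dominated convergence, using that $X^x$ does not explode before time $t$ to recover full mass.

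Alternatively, and more economically, one can bypass the martingale argument entirely by deriving the statement from the already-established bridge version in Lemma \ref{girsanov}, disintegrating both sides over the terminal value. Writing $P(X^x_{[s,t]}\in A) = \int_{\mathbb{R}} P(X^{x\to y}_{[s,t]}\in A)\, p_X(t-s,x,y)\,\mathrm{d}y$, substituting Lemma \ref{girsanov} (whereupon the two $p_X$ factors cancel against each other), and recombining the Brownian bridges $W^{x\to y}_{[s,t]}$ against the Gaussian kernel $p_W(t-s,x,y)$ into the unconditioned motion $W^x_{[s,t]}$, the factor $e^{G(y)}$ becomes $e^{G(W^x_{[s,t]}(t))}$ and the formula drops out. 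The only point to check here is the Fubini--Tonelli interchange over $y$, which is immediate since the integrand is non-negative.
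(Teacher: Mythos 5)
Your proposal is correct and takes essentially the same approach as the paper: the paper offers no argument of its own for Lemma \ref{girsanov2}, deferring instead to the proof of Lemma 3.3 in \cite{downes_borovkov}, and that cited argument is precisely your Girsanov-plus-It\^o computation (write the likelihood ratio, apply It\^o's formula to $G(W^x_{[s,t]})$ to eliminate the stochastic integral, and localize to justify the change of measure since $\mu$ is only $C^1$). Your alternative route via Lemma \ref{girsanov} and disintegration over the terminal value is also valid within the paper's framework, though it inverts the logical order of the reference, where the unpinned identity is the ingredient used to establish the bridge version.
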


\begin{lem}
    \label{girsanov_func2}
    Let $0 \leq t_1 < t_2$.
    For every bounded continuous function $F$ on $C([t_1,t_2] , \mathbb{R})$, and $A \in \mathcal{B}( C([t_1,t_2] , \mathbb{R}) )$, we set
    \begin{equation}\nonumber
        I_W(F ; A) :=  E[ F(W_{[t_1,t_2]}^{a}) ; W_{[t_1,t_2]}^{a} \in A ] ,
    \end{equation}
    \begin{equation}\nonumber
        I_X(F ; A) :=  E[ F(X_{[t_1,t_2]}^{a}) ; X_{[t_1,t_2]}^{a} \in A ] ,
    \end{equation}
    \begin{equation}\nonumber
        \acute{F}(w) := e^{G(w(t_2))} e^{-\frac{1}{2}N_{[t_1,t_2]}(w)} F(w) .
    \end{equation}
    Then, it holds that
    \begin{equation}\nonumber
        I_X(F ; A) = e^{-G(a)} I_W( \acute{F} ; A ) .
    \end{equation}
    In particular, we have
    \begin{equation}\nonumber
        \frac{ I_X(F ; A ) }{ I_X(1 ; A ) } = \frac{ I_W(\acute{F} ; A ) }{ I_W(\acute{1} ; A ) }
    \end{equation}
    for any bounded continuous function $F$ on $C([t_1,t_2] , \mathbb{R})$.
\end{lem}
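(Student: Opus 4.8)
The plan is to mirror the proof of Lemma~\ref{girsanov_func} almost verbatim, replacing the bridge Girsanov identity of Lemma~\ref{girsanov} with its non-pinned counterpart, Lemma~\ref{girsanov2}. First I would establish the claim for indicator integrands $F = 1_C$, $C \in \mathcal{B}(C([t_1,t_2],\mathbb{R}))$. Since $1_C(w)1_A(w) = 1_{C \cap A}(w)$, we have $I_X(1_C;A) = P(X_{[t_1,t_2]}^a \in C \cap A)$, and applying Lemma~\ref{girsanov2} with $s = t_1$, $t = t_2$, $x = a$, and $A$ replaced by $C \cap A$ gives
\begin{equation}\nonumber
I_X(1_C;A) = e^{-G(a)} E\bigl[ e^{G(W_{[t_1,t_2]}^a(t_2))} e^{-\frac{1}{2}N_{[t_1,t_2]}(W_{[t_1,t_2]}^a)} 1_C(W_{[t_1,t_2]}^a) ; W_{[t_1,t_2]}^a \in A \bigr] = e^{-G(a)} I_W(\acute{1_C};A),
\end{equation}
which is the asserted identity in the case $F = 1_C$, where $\acute{1_C}$ is defined by the same formula as $\acute{F}$ with $F = 1_C$.

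Second, I would pass from indicators to a general bounded continuous $F$ by the simple approximation theorem followed by the bounded convergence theorem, exactly as in Lemma~\ref{girsanov_func}. The one point deserving a word of care is the integrability of the dominating function on the Brownian side: taking $C$ and $A$ to be the whole path space in the indicator identity above yields $e^{-G(a)} E[e^{G(W_{[t_1,t_2]}^a(t_2))} e^{-\frac{1}{2}N_{[t_1,t_2]}(W_{[t_1,t_2]}^a)}] = P(X_{[t_1,t_2]}^a \in C([t_1,t_2],\mathbb{R})) = 1$, so that $e^{G(W_{[t_1,t_2]}^a(t_2))} e^{-\frac{1}{2}N_{[t_1,t_2]}(W_{[t_1,t_2]}^a)}$ is integrable and $\|F\|_\infty$ times it dominates every approximant. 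This legitimises the interchange of limit and expectation and delivers
\begin{equation}\nonumber
I_X(F;A) = e^{-G(a)} I_W(\acute{F};A)
\end{equation}
for all bounded continuous $F$ on $C([t_1,t_2],\mathbb{R})$.

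Finally, the ``in particular'' ratio identity is immediate: dividing $I_X(F;A)$ by $I_X(1;A)$ cancels the common prefactor $e^{-G(a)}$, leaving $I_W(\acute{F};A)/I_W(\acute{1};A)$. I do not anticipate a genuine obstacle here; the argument is a routine transcription of Lemma~\ref{girsanov_func}, and the sole substantive difference is bookkeeping. In the bridge case the terminal factor $e^{G(b)}$ was a constant, since the endpoint was pinned, and was pulled out of the expectation together with the density ratio $p_W/p_X$; here $X_{[t_1,t_2]}^a$ is not pinned, so the terminal weight $e^{G(w(t_2))}$ depends on the free endpoint and must instead be absorbed into the functional $\acute{F}$, while the prefactor reduces to the single scalar $e^{-G(a)}$ with no density ratio present.
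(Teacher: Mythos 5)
Your proposal is correct and is precisely what the paper intends: the paper's ``proof'' of Lemma~\ref{girsanov_func2} is the single remark that it follows from Lemma~\ref{girsanov2} in the same way that Lemma~\ref{girsanov_func} follows from Lemma~\ref{girsanov}, and your argument is exactly that transcription (indicators via Lemma~\ref{girsanov2}, then simple approximation and a convergence theorem, then the ratio identity). Your added observation that the weight $e^{G(W_{[t_1,t_2]}^{a}(t_2))}e^{-\frac{1}{2}N_{[t_1,t_2]}(W_{[t_1,t_2]}^{a})}$ is integrable (with $e^{-G(a)}$ times its expectation equal to $1$), so that dominated convergence justifies the passage to the limit, is a careful refinement of the paper's appeal to bounded convergence, not a deviation.
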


The proof of Lemma \ref{girsanov_func2} is similar to that of Lemma \ref{girsanov_func} and it can be deduced from Lemma \ref{girsanov2}.

\section{Proof of Theorem \ref{dif_hm} and the Corollaries}
\subsection{Proof of Theorem \ref{dif_hm}}
For any $\varepsilon > 0$, and $\mathbb{R}$-valued bounded contiunous function $\overline{G}$ on $C([0,1] , \mathbb{R})$, we set
\begin{equation}\nonumber
    I_W(\varepsilon,\overline{G}) := E[ \overline{G}(W_{[0,1]}^{0 \to b}) ; W_{[0,1]}^{0 \to b} \in K(g^- - \eta^-(\varepsilon), g^+ + \eta^+(\varepsilon)) ] ,
\end{equation}
\begin{equation}\nonumber
    I_X(\varepsilon,\overline{G}) := E[ \overline{G}(X_{[0,1]}^{0 \to b}) ; X_{[0,1]}^{0 \to b} \in K(g^- - \eta^-(\varepsilon), g^+ + \eta^+(\varepsilon)) ] .
\end{equation}
By the definition of the conditioned process, we have
\begin{equation}\nonumber
    E[ F( X^{0 \to b}\mid_{K(g^- - \eta^-(\varepsilon), g^+ + \eta^+(\varepsilon))} ) ] = \frac{ I_X(\varepsilon, F) }{ I_X(\varepsilon, 1) } .
\end{equation}
Here, using Lemma \ref{girsanov_func}, it holds that
\begin{equation}\nonumber
    \frac{ I_X(\varepsilon, F) }{ I_X(\varepsilon, 1) } = \frac{ I_W(\varepsilon, \widehat{F}) }{ I_W(\varepsilon, \widehat{1}) } .
\end{equation}
Thus, we obtain
\begin{equation}\nonumber
    E[ F( X^{0 \to b}\mid_{K(g^- - \eta^-(\varepsilon), g^+ + \eta^+(\varepsilon))} ) ] = \frac{ I_W(\varepsilon, \widehat{F}) }{ I_W(\varepsilon, \widehat{1}) } .
\end{equation}
Applying the weak convergence of the conditioned Brownian bridge to the Brownian house-moving $H^{g^- \to g^+}$~\cite{ishitani_hatakenaka_suzuki}, it holds that
\begin{equation}\nonumber
    \begin{aligned}
        \lim_{\varepsilon \downarrow 0} \frac{ I_W(\varepsilon, \widehat{F}) }{ I_W(\varepsilon, \widehat{1}) } &= \lim_{\varepsilon \downarrow 0} \frac{I_W(\varepsilon, \widehat{F})}{I_W(\varepsilon, 1)} \frac{I_W(\varepsilon, 1)}{I_W(\varepsilon, \widehat{1})} \\
        &= \frac{ E[\widehat{F}(H^{g^- \to g^+})] }{ E[\widehat{1}(H^{g^- \to g^+})] } .
    \end{aligned}
\end{equation}
By using the path decomposition formula for the Brownian house-moving $H^{g^- \to g^+}$,
\begin{equation}\nonumber
    \begin{aligned}
        &E[\widehat{F}(H^{g^- \to g^+})] \\
        &= \int_{g^-(t)}^{g^+(t)} E[ \widehat{F}( W_{[0,t]}^{0,y,(g^-,g^+)} \oplus W_{[t,1]}^{y,b,(g^-,g^+)} ) ] h(t,y) \ \mathrm{d} y \\
        &= \int_{g^-(t)}^{g^+(t)} E[ e^{-\frac{1}{2}N_{[0,t]}(W_{[0,t]}^{0,y,(g^-,g^+)})} e^{-\frac{1}{2}N_{[t,1]}(W_{[t,1]}^{y,b,(g^-,g^+)})} F( W_{[0,t]}^{0,y,(g^-,g^+)} \oplus W_{[t,1]}^{y,b,(g^-,g^+)} ) ] h(t,y) \ \mathrm{d} y .
    \end{aligned}
\end{equation}
Thus, we have
\begin{equation}\nonumber
    \begin{aligned}
        &P(H_{\mu}^{g^- \to g^+}(t) \in \mathrm{d} y) / \mathrm{d} y \\
        &= \frac{ E[e^{-\frac{1}{2}N_{[0,t]}(W_{[0,t]}^{0,y,(g^-,g^+)})}] E[e^{-\frac{1}{2}N_{[t,1]}(W_{[t,1]}^{y,b,(g^-,g^+)})}] }{ E[\widehat{1}(H^{g^- \to g^+})] } h(t,y) \\
        &= \frac{ E[e^{-\frac{1}{2}N_{[0,t]}(W_{[0,t]}^{0,y,(g^-,g^+)})}] E[e^{-\frac{1}{2}N_{[t,1]}(W_{[t,1]}^{y,b,(g^-,g^+)})}] }{ E[ e^{-\frac{1}{2}N_{[0,1]}(H^{g^- \to g^+})} ] } h(t,y) \\
        &= h_{\mu}(t,y)  .
    \end{aligned}
\end{equation}
Similarly, since
\begin{equation}\nonumber
    \begin{aligned}
        &E[\widehat{F}(H^{g^- \to g^+})] \\
        &= \int_{g^-(t_2)}^{g^+(t_2)} \int_{g^-(t_1)}^{g^+(t_1)} E[ \widehat{F}( W_{[0,t_1]}^{0,y_1,(g^-,g^+)} \oplus W_{[t_1,t_2]}^{y_1,y_2,(g^-,g^+)} \oplus W_{[t_2,1]}^{y_2,b,(g^-,g^+)} ) ] h(t_1,y_1) h(t_1,y_1,t_2,y_2) \ \mathrm{d} y_1 \mathrm{d} y_2 \\
        &= \int_{g^-(t_2)}^{g^+(t_2)} \int_{g^-(t_1)}^{g^+(t_1)} E[ e^{-\frac{1}{2}N_{[0,t]}(W_{[0,t_1]}^{0,y_1,(g^-,g^+)})} e^{-\frac{1}{2}N_{[t_1,t_2]}(W_{[t_1,t_2]}^{y_1,y_2,(g^-,g^+)})} e^{-\frac{1}{2}N_{[t_2,1]}(W_{[t_2,1]}^{y_2,b,(g^-,g^+)})} \\
        &\qquad \qquad \times F( W_{[0,t_1]}^{0,y_1,(g^-,g^+)} \oplus W_{[t_1,t_2]}^{y_1,y_2,(g^-,g^+)} \oplus W_{[t_2,1]}^{y_2,b,(g^-,g^+)} ) ] h(t_1,y_1) h(t_1,y_1,t_2,y_2) \ \mathrm{d} y_1 \mathrm{d} y_2 ,
    \end{aligned}
\end{equation}
we have
\begin{equation}\nonumber
    \begin{aligned}
        &P( H_{\mu}^{g^- \to g^+}(t_1) \in \mathrm{d} y_1, ~ H_{\mu}^{g^- \to g^+}(t_2) \in \mathrm{d} y_2 ) / \mathrm{d} y_1 \mathrm{d} y_2 \\
        &= \frac{ E[e^{-\frac{1}{2}N_{[0,t]}(W_{[0,t_1]}^{0,y_1,(g^-,g^+)})}] E[e^{-\frac{1}{2}N_{[t_1,t_2]}(W_{[t_1,t_2]}^{y_1,y_2,(g^-,g^+)})}] E[e^{-\frac{1}{2}N_{[t_2,1]}(W_{[t_2,1]}^{y_2,b,(g^-,g^+)})}] }{E[ e^{-\frac{1}{2}N_{[0,1]}(H^{g^- \to g^+})} ]} \\
        &\qquad \times h(t_1,y_1) h(t_1,y_1,t_2,y_2) .
    \end{aligned}
\end{equation}
Hence, we get
\begin{equation}\nonumber
    \begin{aligned}
        &P( H_{\mu}^{g^- \to g^+}(t_2) \in \mathrm{d} y_2  \mid  H_{\mu}^{g^- \to g^+}(t_1) = y_1 ) / \mathrm{d} y_2 \\
        &= \frac{ E[e^{-\frac{1}{2}N_{[t_1,t_2]}(W_{[t_1,t_2]}^{y_1,y_2,(g^-,g^+)})}] E[e^{-\frac{1}{2}N_{[t_2,1]}(W_{[t_2,1]}^{y_2,b,(g^-,g^+)})}] }{E[ e^{-\frac{1}{2}N_{[t_1,1]}(W_{[t_1,1]}^{y_1,b,(g^-,g^+)})} ]} \\
        &\qquad \times h(t_1,y_1,t_2,y_2) \\
        &= h_{\mu}(t_1,y_1,t_2,y_2) .
    \end{aligned}
\end{equation}
For $0 < s < t < 1$, $x \in (g^-(s), g^+(s))$, and $y \in (g^-(t), g^+(t))$, we set
\begin{equation}\nonumber
    \begin{aligned}
        &h_{\mu}(s,x,t,y;\varepsilon) \\
        &:= \frac{P( X^{0 \to b}\mid_{K(g^- - \eta^-(\varepsilon), g^+ + \eta^+(\varepsilon))}(t) \in \mathrm{d} y  \mid  X^{0 \to b}\mid_{K(g^- - \eta^-(\varepsilon), g^+ + \eta^+(\varepsilon))}(s) = x )}{ \mathrm{d} y } ,
    \end{aligned}
\end{equation}
\begin{equation}\nonumber
    \begin{aligned}
        &h(s,x,t,y;\varepsilon) \\
        &:= \frac{P( W^{0 \to b}\mid_{K(g^- - \eta^-(\varepsilon), g^+ + \eta^+(\varepsilon))}(t) \in \mathrm{d} y  \mid  W^{0 \to b}\mid_{K(g^- - \eta^-(\varepsilon), g^+ + \eta^+(\varepsilon))}(s) = x )}{ \mathrm{d} y } .
    \end{aligned}
\end{equation}
Then, since $X^{0 \to b}\mid_{K(g^- - \eta^-(\varepsilon), g^+ + \eta^+(\varepsilon))}$ is a Markov process (cf. Proposition A.$1$ in \cite{ishitani_hatakenaka_suzuki}), we have the following equations
\begin{equation}\label{markov1}
    1 = \int_{g^-(t) - \eta^-(\varepsilon)}^{g^+(t) + \eta^+(\varepsilon)} h_{\mu}(s,x,t,y;\varepsilon) \ \mathrm{d} y ,
\end{equation}
\begin{equation}\label{markov2}
    h_{\mu}(s,x,u,z;\varepsilon) = \int_{g^-(t) - \eta^-(\varepsilon)}^{g^+(t) + \eta^+(\varepsilon)} h_{\mu}(s,x,t,y;\varepsilon) h_{\mu}(t,y,u,z;\varepsilon) \ \mathrm{d} y
\end{equation}
for any $0 < s < t < u < 1$, $x \in (g^-(s), g^+(s))$, and $z \in (g^-(u), g^+(u))$.
Here, by using Lemma \ref{girsanov},
\begin{equation}\nonumber
    \begin{aligned}
        &P( X^{0 \to b}\mid_{K(g^- - \eta^-(\varepsilon), g^+ + \eta^+(\varepsilon))}(s) \in \mathrm{d} x ) \\
        &= P(X_{[0,s]}^{0 \to x} \in K_{[0,s]}(g^- - \eta^-(\varepsilon), g^+ + \eta^+(\varepsilon)) ) P( X_{[s,1]}^{x \to b} \in K_{[s,1]}(g^- - \eta^-(\varepsilon), g^+ + \eta^+(\varepsilon)) ) \\
        &\times (P( X^{0 \to b} \in K(g^- - \eta^-(\varepsilon), g^+ + \eta^+(\varepsilon)) ) )^{-1}  \\
        &\times P( X^{0 \to b}(s) \in \mathrm{d} x ) \\
        &= E[e^{-\frac{1}{2}N_{[0,s]}(W_{[0,s]}^{0 \to x}\mid_{K_{[0,s]}(g^- - \eta^-(\varepsilon), g^+ + \eta^+(\varepsilon))})}] E[e^{-\frac{1}{2}N_{[s,1]}(W_{[s,1]}^{x \to b}\mid_{K_{[s,1]}(g^- - \eta^-(\varepsilon), g^+ + \eta^+(\varepsilon))})}] \\
        &\times ( E[e^{-\frac{1}{2}N_{[0,1]}(W^{0 \to b}\mid_{K(g^- - \eta^-(\varepsilon), g^+ + \eta^+(\varepsilon))})}] )^{-1} \\
        &\times P(W_{[0,s]}^{0 \to x} \in K_{[0,s]}(g^- - \eta^-(\varepsilon), g^+ + \eta^+(\varepsilon)) ) P( W_{[s,1]}^{x \to b} \in K_{[s,1]}(g^- - \eta^-(\varepsilon), g^+ + \eta^+(\varepsilon)) ) \\
        &\times (P( W^{0 \to b} \in K(g^- - \eta^-(\varepsilon), g^+ + \eta^+(\varepsilon)) ) )^{-1}  \\
        &\times P( W^{0 \to b}(s) \in \mathrm{d} x ) \\
        &= E[e^{-\frac{1}{2}N_{[0,s]}(W_{[0,s]}^{0 \to x}\mid_{K_{[0,s]}(g^- - \eta^-(\varepsilon), g^+ + \eta^+(\varepsilon))})}] E[e^{-\frac{1}{2}N_{[s,1]}(W_{[s,1]}^{x \to b}\mid_{K_{[s,1]}(g^- - \eta^-(\varepsilon), g^+ + \eta^+(\varepsilon))})}] \\
        &\times ( E[e^{-\frac{1}{2}N_{[0,1]}(W^{0 \to b}\mid_{K(g^- - \eta^-(\varepsilon), g^+ + \eta^+(\varepsilon))})}] )^{-1} \\
        &\times P( W^{0 \to b}\mid_{K(g^- - \eta^-(\varepsilon), g^+ + \eta^+(\varepsilon))}(s) \in \mathrm{d} x ) ,
    \end{aligned}
\end{equation}
and similarly
\begin{equation}\nonumber
    \begin{aligned}
        &P( X^{0 \to b}\mid_{K(g^- - \eta^-(\varepsilon), g^+ + \eta^+(\varepsilon))}(s) \in \mathrm{d} x , ~ X^{0 \to b}\mid_{K(g^- - \eta^-(\varepsilon), g^+ + \eta^+(\varepsilon))}(t) \in \mathrm{d} y ) \\
        &= E[e^{-\frac{1}{2}N_{[0,s]}(W_{[0,s]}^{0 \to x}\mid_{K_{[0,s]}(g^- - \eta^-(\varepsilon), g^+ + \eta^+(\varepsilon))})}] E[e^{-\frac{1}{2}N_{[s,t]}(W_{[s,t]}^{x \to y}\mid_{K_{[s,t]}(g^- - \eta^-(\varepsilon), g^+ + \eta^+(\varepsilon))})}] \\
        &\times E[e^{-\frac{1}{2}N_{[t,1]}(W_{[t,1]}^{y \to b}\mid_{K_{[t,1]}(g^- - \eta^-(\varepsilon), g^+ + \eta^+(\varepsilon))})}] ( E[e^{-\frac{1}{2}N_{[0,1]}(W^{0 \to b}\mid_{K(g^- - \eta^-(\varepsilon), g^+ + \eta^+(\varepsilon))})}] )^{-1} \\
        &\times P( W^{0 \to b}\mid_{K(g^- - \eta^-(\varepsilon), g^+ + \eta^+(\varepsilon))}(s) \in \mathrm{d} x , ~ W^{0 \to b}\mid_{K(g^- - \eta^-(\varepsilon), g^+ + \eta^+(\varepsilon))}(t) \in \mathrm{d} y ) .
    \end{aligned}
\end{equation}
Then, we have
\begin{equation}\nonumber
    \begin{aligned}
        &h_{\mu}(s,x,t,y;\varepsilon) \\
        &=P( X^{0 \to b}\mid_{K(g^- - \eta^-(\varepsilon), g^+ + \eta^+(\varepsilon))}(t) \in \mathrm{d} y  \mid  X^{0 \to b}\mid_{K(g^- - \eta^-(\varepsilon), g^+ + \eta^+(\varepsilon))}(s) = x ) \\
        &= E[e^{-\frac{1}{2}N_{[s,t]}(W_{[s,t]}^{x \to y}\mid_{K_{[s,t]}(g^- - \eta^-(\varepsilon), g^+ + \eta^+(\varepsilon))})}] E[e^{-\frac{1}{2}N_{[t,1]}(W_{[t,1]}^{y \to b}\mid_{K_{[t,1]}(g^- - \eta^-(\varepsilon), g^+ + \eta^+(\varepsilon))})}] \\
        &\times ( E[e^{-\frac{1}{2}N_{[0,1]}(W_{[s,1]}^{x \to b}\mid_{K_{[s,1]}(g^- - \eta^-(\varepsilon), g^+ + \eta^+(\varepsilon))})}] )^{-1} \\
        &\times h(s,x,t,y;\varepsilon) .
    \end{aligned}
\end{equation}
Now, we set
\begin{equation}\nonumber
    \begin{aligned}
        &\zeta(s,x,t,y;\varepsilon) \\
        &:= \frac{E[e^{-\frac{1}{2}N_{[s,t]}(W_{[s,t]}^{x \to y}\mid_{K_{[s,t]}(g^- - \eta^-(\varepsilon), g^+ + \eta^+(\varepsilon))})}] E[e^{-\frac{1}{2}N_{[t,1]}(W_{[t,1]}^{y \to b}\mid_{K_{[t,1]}(g^- - \eta^-(\varepsilon), g^+ + \eta^+(\varepsilon))})}]}{E[e^{-\frac{1}{2}N_{[s,1]}(W_{[s,1]}^{x \to b}\mid_{K_{[s,1]}(g^- - \eta^-(\varepsilon), g^+ + \eta^+(\varepsilon))})}]} 
    \end{aligned}
\end{equation}
and
\begin{equation}\nonumber
    \zeta(s,x,t,y) := \frac{ E[e^{-\frac{1}{2}N_{[s,t]}(W_{[s,t]}^{x,y,(g^-,g^+)})}] E[e^{-\frac{1}{2}N_{[t,1]}(W_{[t,1]}^{y,b,(g^-,g^+)})}] }{E[ e^{-\frac{1}{2}N_{[s,1]}(W_{[s,1]}^{x,b,(g^-,g^+)})} ]} ,
\end{equation}
then
\begin{equation}\nonumber
    h_{\mu}(s,x,t,y;\varepsilon) = \zeta(s,x,t,y;\varepsilon) h(s,x,t,y;\varepsilon)
\end{equation}
and
\begin{equation}\nonumber
    h_{\mu}(s,x,t,y) = \zeta(s,x,t,y) h(s,x,t,y)
\end{equation}
hold.
Then, we get
\begin{equation}\nonumber
    \begin{aligned}
        &\left\lvert \int_{g^-(t) - \eta^-(\varepsilon)}^{g^+(t) + \eta^+(\varepsilon)} h_{\mu}(s,x,t,y;\varepsilon) \ \mathrm{d} y - \int_{g^-(t)}^{g^+(t)} h_{\mu}(s,x,t,y) \ \mathrm{d} y \right\rvert \\
        &\leq \left\lvert \int_{g^-(t) - \eta^-(\varepsilon)}^{g^+(t) + \eta^+(\varepsilon)} \zeta(s,x,t,y;\varepsilon) h(s,x,t,y;\varepsilon) \ \mathrm{d} y - \int_{g^-(t)}^{g^+(t)} \zeta(s,x,t,y;\varepsilon) h(s,x,t,y) \ \mathrm{d} y  \right\rvert \\
        &+ \left\lvert \int_{g^-(t)}^{g^+(t)} \zeta(s,x,t,y;\varepsilon) h(s,x,t,y) \ \mathrm{d} y - \int_{g^-(t)}^{g^+(t)} \zeta(s,x,t,y) h(s,x,t,y) \ \mathrm{d} y \right\rvert \\
        &=: \one + \two .
    \end{aligned}
\end{equation}
Note that
\begin{equation}\nonumber
    e^{-\frac{1}{2} N_{[s,t]}(w) } \leq e^{c_{\mu} (t-s)}
\end{equation}
for every $0 \leq s < t \leq 1, w \in C([s,t] , \mathbb{R})$.
Hence, we have
\begin{equation}\nonumber
    \begin{aligned}
        &\qquad \one \\
        &\leq \int_{\mathbb{R}} \zeta(s,x,t,y;\varepsilon) \left\lvert 1_{(g^-(t) - \eta^-(\varepsilon), g^+(t) + \eta^+(\varepsilon))}(y) h(s,x,t,y;\varepsilon) - 1_{(g^-(t), g^+(t))}(y) h(s,x,t,y) \right\rvert \ \mathrm{d} y \\
        &\leq \frac{e^{c_{\mu}(t-s)} e^{c_{\mu}(1-t)}}{2 E[ e^{-\frac{1}{2}N_{[s,1]}(W_{[s,1]}^{x,b,(g^-,g^+)})} ]} \\
        &\times \int_{\mathbb{R}} \left\lvert 1_{(g^-(t) - \eta^-(\varepsilon), g^+(t) + \eta^+(\varepsilon))}(y) h(s,x,t,y;\varepsilon) - 1_{(g^-(t), g^+(t))}(y) h(s,x,t,y) \right\rvert \ \mathrm{d} y .
    \end{aligned}
\end{equation}
Since
\begin{equation}\nonumber
    \lim_{\varepsilon \downarrow 0} 1_{(g^-(t) - \eta^-(\varepsilon), g^+(t) + \eta^+(\varepsilon))}(y) h(s,x,t,y;\varepsilon) = 1_{(g^-(t), g^+(t))}(y) h(s,x,t,y)
\end{equation}
and
\begin{equation}\nonumber
    \lim_{\varepsilon \downarrow 0} \int_{\mathbb{R}} 1_{(g^-(t) - \eta^-(\varepsilon), g^+(t) + \eta^+(\varepsilon))}(y) h(s,x,t,y;\varepsilon) \ \mathrm{d} y = \int_{\mathbb{R}} 1_{(g^-(t), g^+(t))}(y) h(s,x,t,y) \ \mathrm{d} y
\end{equation}
are shown in \cite{ishitani_hatakenaka_suzuki}, we can deduce that
\begin{equation}\nonumber
    \lim_{\varepsilon \downarrow 0} \int_{\mathbb{R}} \left\lvert 1_{(g^-(t) - \eta^-(\varepsilon), g^+(t) + \eta^+(\varepsilon))}(y) h(s,x,t,y;\varepsilon) - 1_{(g^-(t), g^+(t))}(y) h(s,x,t,y) \right\rvert \ \mathrm{d} y = 0
\end{equation}
from Scheff\'e's lemma.
Thus, we obtain $\one \to 0$.
Since
\begin{equation}\nonumber
    \lim_{\varepsilon \downarrow 0} \zeta(s,x,t,y;\varepsilon) h(s,x,t,y) = \zeta(s,x,t,y) h(s,x,t,y)
\end{equation}
and
\begin{equation}\nonumber
    \begin{aligned}
        &\left\lvert \zeta(s,x,t,y;\varepsilon) h(s,x,t,y) \right\rvert \\
        &\leq \frac{e^{c_{\mu}(t-s)} e^{c_{\mu}(1-t)}}{2 E[ e^{-\frac{1}{2}N_{[s,1]}(W_{[s,1]}^{x,b,(g^-,g^+)})} ]} h(s,x,t,y) ,
    \end{aligned}
\end{equation}
we get $\two \to 0$ by using the dominated convergence theorem.
Therefore, we have
\begin{equation}\nonumber
    \begin{aligned}
        &\lim_{\varepsilon \downarrow 0} \int_{g^-(t) - \eta^-(\varepsilon)}^{g^+(t) + \eta^+(\varepsilon)} h_{\mu}(s,x,t,y;\varepsilon) \ \mathrm{d} y \\
        &= \int_{g^-(t)}^{g^+(t)} h_{\mu}(s,x,t,y) \ \mathrm{d} y .
    \end{aligned} 
\end{equation}
Combining the above with the equation \eqref{markov1}, we obtain
\begin{equation}\nonumber
    1 = \int_{g^-(t)}^{g^+(t)} h_{\mu}(s,x,t,y) \ \mathrm{d} y .
\end{equation}
Similarly, we have
\begin{equation}\nonumber
    \begin{aligned}
        &\left\lvert \int_{g^-(t) - \eta^-(\varepsilon)}^{g^+(t) + \eta^+(\varepsilon)} h_{\mu}(s,x,t,y;\varepsilon) h_{\mu}(t,y,u,z;\varepsilon) \ \mathrm{d} y - \int_{g^-(t)}^{g^+(t)} h_{\mu}(s,x,t,y) h_{\mu}(t,y,u,z) \ \mathrm{d} y \right\rvert \\
        &\leq \left\lvert \int_{g^-(t) - \eta^-(\varepsilon)}^{g^+(t) + \eta^+(\varepsilon)} \zeta(s,x,t,y;\varepsilon) \zeta(t,y,u,z;\varepsilon) h(s,x,t,y;\varepsilon) h(t,y,u,z;\varepsilon) \ \mathrm{d} y \right. \\
        &\left. - \int_{g^-(t)}^{g^+(t)} \zeta(s,x,t,y;\varepsilon) \zeta(t,y,u,z;\varepsilon) h(s,x,t,y) h(t,y,u,z) \ \mathrm{d} y  \right\rvert \\
        &+ \left\lvert \int_{g^-(t)}^{g^+(t)} \zeta(s,x,t,y;\varepsilon) \zeta(t,y,u,z;\varepsilon) h(s,x,t,y) h(t,y,u,z) \ \mathrm{d} y \right. \\
        &\left. - \int_{g^-(t)}^{g^+(t)} \zeta(s,x,t,y) \zeta(t,y,u,z) h(s,x,t,y) h(t,y,u,z) \right\rvert \\
        &=: \three + \four .
    \end{aligned}
\end{equation}
Hence,
\begin{equation}\nonumber
    \begin{aligned}
        &\qquad \three \\
        &\leq \int_{\mathbb{R}} \zeta(s,x,t,y;\varepsilon) \zeta(t,y,u,z;\varepsilon) \\
        &\qquad \times \left\lvert 1_{(g^-(t) - \eta^-(\varepsilon), g^+(t) + \eta^+(\varepsilon))}(y) h(s,x,t,y;\varepsilon) h(t,y,u,z;\varepsilon) \right. \\
        &\qquad \left. - 1_{(g^-(t), g^+(t))}(y) h(s,x,t,y) h(t,y,u,z) \right\rvert \ \mathrm{d} y \\
        &\leq \frac{ e^{c_{\mu}(t-s)} e^{c_{\mu}(u-t)} e^{c_{\mu}(1-u)} }{ E[ e^{-\frac{1}{2}N_{[s,1]}(W_{[s,1]}^{x,b,(g^-,g^+)})} ] } \\
        &\times \int_{\mathbb{R}} \left\lvert 1_{(g^-(t) - \eta^-(\varepsilon), g^+(t) + \eta^+(\varepsilon))}(y) h(s,x,t,y;\varepsilon) h(t,y,u,z;\varepsilon) \right. \\
        &\qquad \left. - 1_{(g^-(t), g^+(t))}(y) h(s,x,t,y) h(t,y,u,z) \right\rvert \ \mathrm{d} y .
    \end{aligned}
\end{equation}
\begin{equation}\nonumber
    \begin{aligned}
        &\lim_{\varepsilon \downarrow 0} 1_{(g^-(t) - \eta^-(\varepsilon), g^+(t) + \eta^+(\varepsilon))}(y) h(s,x,t,y;\varepsilon) h(t,y,u,z;\varepsilon) \\
        &= 1_{(g^-(t), g^+(t))}(y) h(s,x,t,y) h(t,y,u,z)
    \end{aligned}
\end{equation}
and
\begin{equation}\nonumber
    \begin{aligned}
        &\lim_{\varepsilon \downarrow 0} \int_{\mathbb{R}} 1_{(g^-(t) - \eta^-(\varepsilon), g^+(t) + \eta^+(\varepsilon))}(y) h(s,x,t,y;\varepsilon) h(t,y,u,z;\varepsilon) \ \mathrm{d} y \\
        &= \int_{\mathbb{R}} 1_{(g^-(t), g^+(t))}(y) h(s,x,t,y) h(t,y,u,z) \ \mathrm{d} y
    \end{aligned}
\end{equation}
are shown in \cite{ishitani_hatakenaka_suzuki}, we can deduce that
\begin{equation}\nonumber
    \begin{aligned}
        &\lim_{\varepsilon \downarrow 0} \int_{\mathbb{R}} \left\lvert 1_{(g^-(t) - \eta^-(\varepsilon), g^+(t) + \eta^+(\varepsilon))}(y) h(s,x,t,y;\varepsilon) h(t,y,u,z;\varepsilon) \right. \\
        &\qquad \left. - 1_{(g^-(t), g^+(t))}(y) h(s,x,t,y) h(t,y,u,z) \right\rvert \ \mathrm{d} y \\
        &= 0
    \end{aligned}
\end{equation}
from Scheff\'e's lemma.
Thus, we obtain $\three \to 0$.
Since
\begin{equation}\nonumber
    \begin{aligned}
        &\lim_{\varepsilon \downarrow 0} \zeta(s,x,t,y;\varepsilon) \zeta(t,y,u,z;\varepsilon) h(s,x,t,y) h(t,y,u,z) \\
        &= \zeta(s,x,t,y) \zeta(t,y,u,z) h(s,x,t,y) h(t,y,u,z)
    \end{aligned}
\end{equation}
and
\begin{equation}\nonumber
    \begin{aligned}
        &\left\lvert \zeta(s,x,t,y;\varepsilon) \zeta(t,y,u,z;\varepsilon) h(s,x,t,y) h(t,y,u,z) \right\rvert \\
        &\leq \frac{ e^{c_{\mu}(t-s)} e^{c_{\mu}(u-t)} e^{c_{\mu}(1-u)} }{2 E[ e^{-\frac{1}{2}N_{[s,1]}(W_{[s,1]}^{x,b,(g^-,g^+)})} ] } h(s,x,t,y) h(t,y,u,z) ,
    \end{aligned}
\end{equation}
we get $\four \to 0$ by using the dominated convergence theorem.
Therefore, we have
\begin{equation}\nonumber
    \begin{aligned}
        &\lim_{\varepsilon \downarrow 0} \int_{g^-(t) - \eta^-(\varepsilon)}^{g^+(t) + \eta^+(\varepsilon)} h_{\mu}(s,x,t,y;\varepsilon) h_{\mu}(t,y,u,z;\varepsilon) \ \mathrm{d} y \\
        &= \int_{g^-(t)}^{g^+(t)} h_{\mu}(s,x,t,y) h_{\mu}(t,y,u,z) \ \mathrm{d} y .
    \end{aligned}
\end{equation}
Combining the above with the equation \eqref{markov2}, we obtain
\begin{equation}\nonumber
    h_{\mu}(s,x,u,z) = \int_{g^-(t)}^{g^+(t)} h_{\mu}(s,x,t,y) h_{\mu}(t,y,u,z) \ \mathrm{d} y .
\end{equation}
Therefore, $H_{\mu}^{g^- \to g^+}$ is a Markov process.

\subsection{Proof of the Corollaries}
Since the diffusion house-moving $H_{\mu}^{g^- \to g^+}$ is absolutely continuous with respect to the Brownian house-moving $H^{g^- \to g^+}$, we can deduce that Corollary \ref{cor1} and \ref{cor2} hold from the previous works~(cf. Corollary $3.2$ and $3.3$ in \cite{ishitani_hatakenaka_suzuki}).

By the Markov property of $H^{g^- \to g^+}$, we have
\begin{equation}\nonumber
    \begin{aligned}
        &\left( P \circ \left( \pi_{[0,t]} \circ H_{\mu}^{g^- \to g^+} \right)^{-1} \right) (A) \\
        &= \frac{ E^{H^{g^- \to g^+}}[ e^{-\frac{1}{2}N_{[0,1]}(w)} 1_{\pi_{[0,t]}^{-1}(A)}(w) ] }{ E[e^{-\frac{1}{2}N_{[0,1]}(H^{g^- \to g^+})}] } \\
        &= \frac{ E^{H^{g^- \to g^+}}[ e^{-\frac{1}{2}N_{[0,t]}(w)} 1_{\pi_{[0,t]}^{-1}(A)}(w) E^{H^{g^- \to g^+}}[ e^{-\frac{1}{2}N_{[t,1]}(w)} \mid w(t) ] ] }{ E[e^{-\frac{1}{2}N_{[0,1]}(H^{g^- \to g^+})}] }
    \end{aligned}
\end{equation}
for any $A \in \mathcal{B}(C([0,t] , \mathbb{R}))$.
Since
\begin{equation}\nonumber
    E^{H^{g^- \to g^+}}[ e^{-\frac{1}{2}N_{[t,1]}(w)} \mid w(t) ] = E[ e^{-\frac{1}{2}N_{[t,1]}(W^{w(t), b, (g^-,g^+)})} ] ,
\end{equation}
we obtain
\begin{equation}\nonumber
    \begin{aligned}
        &\frac{ \mathrm{d} \left( P \circ \left( \pi_{[0,t]} \circ H_{\mu}^{g^- \to g^+} \right)^{-1} \right) }{ \mathrm{d} \left( P \circ \left( \pi_{[0,t]} \circ H^{g^- \to g^+} \right)^{-1} \right) } (w) \\
        &= \frac{ E[ e^{-\frac{1}{2}N_{[t,1]}(W^{w(t), b, (g^-,g^+)})} ] }{ E[ e^{-\frac{1}{2}N_{[0,1]}(H^{g^- \to g^+})} ] } e^{-\frac{1}{2}N_{[0,t]}(w)} .
    \end{aligned}
\end{equation}
On the other hand, it follows from Theorem $3.2$ in \cite{ishitani_hatakenaka_suzuki} that
\begin{equation}\nonumber
    \begin{aligned}
        &\frac{ \mathrm{d} \left( P \circ \left( \pi_{[0,t]} \circ H^{g^- \to g^+} \right)^{-1} \right) }{ \mathrm{d} \left( P \circ \left( R_{[0,t]} + g^- \right)^{-1} \right) }(w) \\
        &= \sqrt{\frac{\pi}{2}} \frac{ q_{[t,1]}^{(g^-,g^+),(\downarrow)}(w(t)) }{ C_{g^-,g^+} \sqrt{1-t} (w(t) - g^-(t)) Z_{[0,t]}^{g^-}(w) } 1_{K_{[0,t]}^-(g^+)}(w) .
    \end{aligned}
\end{equation}
Combining the above, we obtain \eqref{eqcor3}.
Thus, Corollary \ref{cor3} holds.

Corollary \ref{cor4} follows from the expression for the density of the $H_{\mu}^{g^- \to g^+}$.

\section{Proof of Theorem \ref{thm2}}
For any $\varepsilon > 0$, and every $\mathbb{R}$-valued bounded contiunous function $\overline{G}$ on $C([0,1] , \mathbb{R})$, we set
\begin{equation}\nonumber
    I_W(\varepsilon,\overline{G}) := E[ \overline{G}(W_{[0,1]}^{0 \to b}) ; W_{[0,1]}^{0 \to b} \in K(g^- - \eta^-(\varepsilon), g^+ + \eta^+(\varepsilon)) ] ,
\end{equation}
\begin{equation}\nonumber
    I_X(\varepsilon,\overline{G}) := E[ \overline{G}(X_{[0,1]}^{0 \to b}) ; X_{[0,1]}^{0 \to b} \in K(g^- - \eta^-(\varepsilon), g^+ + \eta^+(\varepsilon)) ] .
\end{equation}
From Theorem \ref{dif_hm}, we have
\begin{equation}\nonumber
        E[F(H_{\mu}^{g^- \to g^+})] = \lim_{\varepsilon \downarrow 0} \frac{ I_X(\varepsilon,F) }{ I_X(\varepsilon,1) } .
\end{equation}
By the Markov property of $X$, we obtain
\begin{equation}\nonumber
    \begin{aligned}
        &I_X(\varepsilon,F) P(X(1) \in \mathbb{d} b) \\
        &= E[ F(X) ; X(1) \in \mathrm{d} b , X \in K(g^- - \eta^-(\varepsilon), g^+ + \eta^+(\varepsilon)) ] \\
        &= \int_{g^-(t) - \eta^-(\varepsilon)}^{g^+(t) + \eta^+(\varepsilon)} E[F( X_{[0,t]}^{0,y,(g^- - \eta^-(\varepsilon),g^+ + \eta^+(\varepsilon))} \oplus X_{[t,1]}^{y,b,(g^- - \eta^-(\varepsilon),g^+ + \eta^+(\varepsilon))} )] \\
        &\times P( X_{[0,t]}^{0,y} \in K_{[0,t]}(g^- - \eta^-(\varepsilon), g^+ + \eta^+(\varepsilon)) ) P( X_{[0,t]}(t) \in \mathrm{d} y ) \\
        &\times P( X_{[t,1]}^{y, b } \in K_{[t,1]}(g^- - \eta^-(\varepsilon), g^+ + \eta^+(\varepsilon)) ) P( X^y_{[t,1]}(1) \in \mathrm{d} \tilde{b} ) / \mathrm{d} \tilde{b} \mid_{\tilde{b} = b} \\
        &= e^{G(b) - G(0)} \int_{g^-(t) - \eta^-(\varepsilon)}^{g^+(t) + \eta^+(\varepsilon)} E[F( X_{[0,t]}^{0,y,(g^- - \eta^-(\varepsilon),g^+ + \eta^+(\varepsilon))} \oplus X_{[t,1]}^{y,b,(g^- - \eta^-(\varepsilon),g^+ + \eta^+(\varepsilon))} )] \\
        &\times E[ e^{-\frac{1}{2}N_{[0,t]}(W_{[0,t]}^{0,y})} 1_{K_{[0,t]}(g^- - \eta^-(\varepsilon), g^+ + \eta^+(\varepsilon))}(W_{[0,t]}^{0,y}) ] P( W_{[0,t]}(t) \in \mathrm{d} y ) \\
        &\times E[ e^{-\frac{1}{2}N_{[t,1]}(W_{[t,1]}^{y,b})} 1_{K_{[t,1]}(g^- - \eta^-(\varepsilon), g^+ + \eta^+(\varepsilon))}(W_{[t,1]}^{y,b}) ] P( W^y_{[t,1]}(1) \in \mathrm{d} \tilde{b} ) / \mathrm{d} \tilde{b} \mid_{\tilde{b} = b} \\
        &= e^{G(b) - G(0)} \int_{g^-(t) - \eta^-(\varepsilon)}^{g^+(t) + \eta^+(\varepsilon)} E[F( X_{[0,t]}^{0,y,(g^- - \eta^-(\varepsilon),g^+ + \eta^+(\varepsilon))} \oplus X_{[t,1]}^{y,b,(g^- - \eta^-(\varepsilon),g^+ + \eta^+(\varepsilon))} )] \\
        &\times E[ e^{-\frac{1}{2}N_{[0,t]}(W_{[0,t]}^{0,y,(g^- - \eta^-(\varepsilon) , g^+ + \eta^+(\varepsilon))})} ] \\
        &\times P( W_{[0,t]}^{0,y} \in K_{[0,t]}(g^- - \eta^-(\varepsilon), g^+ + \eta^+(\varepsilon)) ) P( W_{[0,t]}(t) \in \mathrm{d} y ) \\
        &\times E[ e^{-\frac{1}{2}N_{[t,1]}(W_{[t,1]}^{y,b,(g^- - \eta^-(\varepsilon) , g^+ + \eta^+(\varepsilon))})} ] \\
        &\times P( W_{[t,1]}^{y, b } \in K_{[t,1]}(g^- - \eta^-(\varepsilon), g^+ + \eta^+(\varepsilon)) ) P( W^y_{[t,1]}(1) \in \mathrm{d} \tilde{b} ) / \mathrm{d} \tilde{b} \mid_{\tilde{b} = b} .
    \end{aligned}
\end{equation}
On the other hand,
\begin{equation}\nonumber
    \begin{aligned}
        &I_X(\varepsilon,1) P(X(1) \in \mathrm{d} b) \\
        &= e^{G(b) - G(0)} I_W(\varepsilon, \widehat{1}) P(W(1) \in \mathrm{d} b)
    \end{aligned}
\end{equation}
follows from Lemma \ref{girsanov}.
Hence, we have
\begin{equation}\nonumber
    \begin{aligned}
        &\frac{ I_X(\varepsilon,F) }{ I_X(\varepsilon,1) } \\
        &= \int_{g^-(t) - \eta^-(\varepsilon)}^{g^+(t) + \eta^+(\varepsilon)} E[F( X_{[0,t]}^{0,y,(g^- - \eta^-(\varepsilon),g^+ + \eta^+(\varepsilon))} \oplus X_{[t,1]}^{y,b,(g^- - \eta^-(\varepsilon),g^+ + \eta^+(\varepsilon))} )] \\
        &\times E[ e^{-\frac{1}{2}N_{[0,t]}(W_{[0,t]}^{0,y,(g^- - \eta^-(\varepsilon) , g^+ + \eta^+(\varepsilon))})} ] \\
        &\times P( W_{[0,t]}^{0,y} \in K_{[0,t]}(g^- - \eta^-(\varepsilon), g^+ + \eta^+(\varepsilon)) ) P( W_{[0,t]}(t) \in \mathrm{d} y ) \\
        &\times E[ e^{-\frac{1}{2}N_{[t,1]}(W_{[t,1]}^{y,b,(g^- - \eta^-(\varepsilon) , g^+ + \eta^+(\varepsilon))})} ] \\
        &\times P( W_{[t,1]}^{y, b } \in K_{[t,1]}(g^- - \eta^-(\varepsilon), g^+ + \eta^+(\varepsilon)) ) P( W^y_{[t,1]}(1) \in \mathrm{d} \tilde{b} ) / \mathrm{d} \tilde{b} \mid_{\tilde{b} = b} \\
        &\times \left( I_W(\varepsilon, \widehat{1}) P(W(1) \in \mathrm{d} b) \right)^{-1} \\
        &= \int_{g^-(t) - \eta^-(\varepsilon)}^{g^+(t) + \eta^+(\varepsilon)} E[F( X_{[0,t]}^{0,y,(g^- - \eta^-(\varepsilon),g^+ + \eta^+(\varepsilon))} \oplus X_{[t,1]}^{y,b,(g^- - \eta^-(\varepsilon),g^+ + \eta^+(\varepsilon))} )] \\
        &\times E[ e^{-\frac{1}{2}N_{[0,t]}(W_{[0,t]}^{0,y,(g^- - \eta^-(\varepsilon) , g^+ + \eta^+(\varepsilon))})} ] \\
        &\times P( W_{[0,t]}^{0,y} \in K_{[0,t]}(g^- - \eta^-(\varepsilon), g^+ + \eta^+(\varepsilon)) ) P( W_{[0,t]}(t) \in \mathrm{d} y ) \\
        &\times E[ e^{-\frac{1}{2}N_{[t,1]}(W_{[t,1]}^{y,b,(g^- - \eta^-(\varepsilon) , g^+ + \eta^+(\varepsilon))})} ] \\
        &\times P( W_{[t,1]}^{y, b } \in K_{[t,1]}(g^- - \eta^-(\varepsilon), g^+ + \eta^+(\varepsilon)) ) P( W^y_{[t,1]}(1) \in \mathrm{d} \tilde{b} ) / \mathrm{d} \tilde{b} \mid_{\tilde{b} = b} \\
        &\times \left( I_W(\varepsilon, 1) P(W(1) \in \mathrm{d} b) \right)^{-1} \frac{I_W(\varepsilon, 1)}{I_W(\varepsilon, \widehat{1})} .
    \end{aligned}
\end{equation}
Since
\begin{equation}\nonumber
    \begin{aligned}
        &P( W_{[0,t]}^{0,y} \in K_{[0,t]}(g^- - \eta^-(\varepsilon), g^+ + \eta^+(\varepsilon)) ) P( W_{[0,t]}(t) \in \mathrm{d} y ) / \mathrm{d} y \\
        &\times P( W_{[t,1]}^{y, b } \in K_{[t,1]}(g^- - \eta^-(\varepsilon), g^+ + \eta^+(\varepsilon)) ) P( W^y_{[t,1]}(1) \in \mathrm{d} \tilde{b} ) / \mathrm{d} \tilde{b} \mid_{\tilde{b} = b} \\
        &\times \left( I(\varepsilon, 1 ;W^{0 \to b}) P(W(1) \in \mathrm{d} b) \right)^{-1} \\
        &\to h(t,y)
    \end{aligned}
\end{equation}
follows from the proof of Theorem $3.1$ in \cite{ishitani_hatakenaka_suzuki}, we obtain
\begin{equation}\nonumber
    \begin{aligned}
        &E[F(H_{\mu}^{g^- \to g^+})] \\
        &= \lim_{\varepsilon \to 0} \frac{ I(\varepsilon,F;X^{0 \to b}) }{ I(\varepsilon,1;X^{0 \to b}) } \\
        &= \int_{g^-(t)}^{g^+(t)} E[F( X_{[0,t]}^{0,y,(g^-,g^+)} \oplus X_{[t,1]}^{y,b,(g^-,g^+)} )] \\
        &\times E[ e^{-\frac{1}{2}N_{[0,t]}(W_{[0,t]}^{0,y,(g^-  , g^+ )})} ]  E[ e^{-\frac{1}{2}N_{[t,1]}(W_{[t,1]}^{y,b,(g^-  , g^+ )})} ] h(t,y) \ \mathrm{d} y  \\
        &\times \frac{1}{E[\widehat{1}(H^{g^- \to g^+})]} \\
        &= \int_{g^-(t)}^{g^+(t)} E[F( X_{[0,t]}^{0,y,(g^-,g^+)} \oplus X_{[t,1]}^{y,b,(g^-,g^+)} )] h_{\mu}(t,y) \ \mathrm{d} y .
    \end{aligned}
\end{equation}
Thus, \eqref{path_decomp1} holds.
Similarly, we can prove that \eqref{path_decomp2} holds.

\section{Proof of Proposition \ref{dif_mea}}
For any $\varepsilon > 0$, and $\mathbb{R}$-valued bounded contiunous function $\overline{G}$ on $C([0,T] , \mathbb{R})$, we set
\begin{equation}\nonumber
    I_W(\varepsilon,\overline{G}) := E[ \overline{G}(W_{[0,T]}^{0}) ; W_{[0,T]}^{0} \in K_{[0,T]}(g^- - \eta^-(\varepsilon), g^+ + \eta^+(\varepsilon)) ] ,
\end{equation}
\begin{equation}\nonumber
    I_X(\varepsilon,\overline{G}) := E[ \overline{G}(X_{[0,T]}^{0}) ; X_{[0,T]}^{0} \in K_{[0,T]}(g^- - \eta^-(\varepsilon), g^+ + \eta^+(\varepsilon)) ] .
\end{equation}
By the definition of the conditioned process, we have
\begin{equation}\nonumber
    E[ F( X_{[0,T]}^{0}\mid_{K(g^- - \eta^-(\varepsilon), g^+ + \eta^+(\varepsilon))} ) ] = \frac{ I_X(\varepsilon, F) }{ I_X(\varepsilon, 1) } .
\end{equation}
Here, using Lemma \ref{girsanov_func2}, it holds that
\begin{equation}\nonumber
    \frac{ I_X(\varepsilon, F) }{ I_X(\varepsilon, 1) } = \frac{ I_W(\varepsilon, \acute{F}) }{ I_W(\varepsilon, \acute{1}) } .
\end{equation}
Thus, we obtain
\begin{equation}\nonumber
    E[ F( X_{[0,T]}^{0}\mid_{K(g^- - \eta^-(\varepsilon), g^+ + \eta^+(\varepsilon))} ) ] = \frac{ I_W(\varepsilon, \acute{F}) }{ I_W(\varepsilon, \acute{1}) } .
\end{equation}
Applying the weak convergence of the conditioned Brownian motion to the Brownian meander $W_{[0,T]}^{0,(g^-,g^+)}$~\cite{ishitani_hatakenaka_suzuki}, it holds that
\begin{equation}\nonumber
    \begin{aligned}
        \lim_{\varepsilon \downarrow 0} \frac{ I_W(\varepsilon, \widehat{F}) }{ I_W(\varepsilon, \widehat{1}) } &= \lim_{\varepsilon \downarrow 0} \frac{I_W(\varepsilon, \widehat{F})}{I_W(\varepsilon, 1)} \frac{I_W(\varepsilon, 1)}{I_W(\varepsilon, \widehat{1})} \\
        &= \frac{ E[\widehat{F}(W_{[0,T]}^{0,(g^-,g^+)})] }{ E[\widehat{1}(W_{[0,T]}^{0,(g^-,g^+)})] } .
    \end{aligned}
\end{equation}
By using the path decomposition formula for the Brownian meander $W_{[0,T]}^{0,(g^-,g^+)}$,
\begin{equation}\nonumber
    \begin{aligned}
        &E[\acute{F}(W_{[0,T]}^{0,(g^-,g^+)})] \\
        &= \int_{g^-(t)}^{g^+(t)} E[ \acute{F}( W_{[0,t]}^{0,y,(g^-,g^+)} \oplus W_{[t,T]}^{y,(g^-,g^+)} ) ] k(t,y) \ \mathrm{d} y \\
        &= \int_{g^-(t)}^{g^+(t)} E[ e^{-\frac{1}{2}N_{[0,t]}(W_{[0,t]}^{0,y,(g^-,g^+)})} e^{G(W_{[t,T]}^{y,(g^-,g^+)}(T))} e^{-\frac{1}{2}N_{[t,T]}(W_{[t,T]}^{y,(g^-,g^+)})} \\
        &\qquad \qquad \times F( W_{[0,t]}^{0,y,(g^-,g^+)} \oplus W_{[t,T]}^{y,(g^-,g^+)} ) ] k(t,y) \ \mathrm{d} y .
    \end{aligned}
\end{equation}
Thus, we have
\begin{equation}\nonumber
    \begin{aligned}
        &P(X_{[0,T]}^{0,(g^-,g^+)}(t) \in \mathrm{d} y) / \mathrm{d} y \\
        &= \frac{ E[e^{-\frac{1}{2}N_{[0,t]}(W_{[0,t]}^{0,y,(g^-,g^+)})}] E[ e^{G(W_{[t,T]}^{y,(g^-,g^+)}(T))} e^{-\frac{1}{2}N_{[t,T]}(W_{[t,T]}^{y,(g^-,g^+)})}] }{ E[\acute{1}(W_{[0,T]}^{0,(g^-,g^+)})] } k(t,y) \\
        &= \frac{ E[e^{-\frac{1}{2}N_{[0,t]}(W_{[0,t]}^{0,y,(g^-,g^+)})}] E[ e^{G(W_{[t,T]}^{y,(g^-,g^+)}(T))} e^{-\frac{1}{2}N_{[t,T]}(W_{[t,T]}^{y,(g^-,g^+)})}] }{ E[ e^{G(W_{[0,T]}^{0,(g^-,g^+)}(T))} e^{-\frac{1}{2}N_{[0,1]}(W_{[0,T]}^{0,(g^-,g^+)})} ] } k(t,y) .
    \end{aligned}
\end{equation}
Similarly, since
\begin{equation}\nonumber
    \begin{aligned}
        &E[\acute{F}(W_{[0,T]}^{0,(g^-,g^+)})] \\
        &= \int_{g^-(t_2)}^{g^+(t_2)} \int_{g^-(t_1)}^{g^+(t_1)} E[ \acute{F}( W_{[0,t_1]}^{0,y_1,(g^-,g^+)} \oplus W_{[t_1,t_2]}^{y_1,y_2,(g^-,g^+)} \oplus W_{[t_2,T]}^{y_2,(g^-,g^+)} ) ] k(t_1,y_1) k(t_1,y_1,t_2,y_2) \ \mathrm{d} y_1 \mathrm{d} y_2 \\
        &= \int_{g^-(t_2)}^{g^+(t_2)} \int_{g^-(t_1)}^{g^+(t_1)} E[ e^{-\frac{1}{2}N_{[0,t]}(W_{[0,t_1]}^{0,y_1,(g^-,g^+)})} e^{-\frac{1}{2}N_{[t_1,t_2]}(W_{[t_1,t_2]}^{y_1,y_2,(g^-,g^+)})} e^{G(W_{[t_2,T]}^{y_2,(g^-,g^+)}(T))} e^{-\frac{1}{2}N_{[t_2,T]}(W_{[t_2,T]}^{y_2,(g^-,g^+)})} \\
        &\qquad \qquad \times F( W_{[0,t_1]}^{0,y_1,(g^-,g^+)} \oplus W_{[t_1,t_2]}^{y_1,y_2,(g^-,g^+)} \oplus W_{[t_2,T]}^{y_2,(g^-,g^+)} ) ] k(t_1,y_1) k(t_1,y_1,t_2,y_2) \ \mathrm{d} y_1 \mathrm{d} y_2 ,
    \end{aligned}
\end{equation}
we have
\begin{equation}\nonumber
    \begin{aligned}
        &P( X_{[0,T]}^{0,(g^-,g^+)}(t_1) \in \mathrm{d} y_1, ~ X_{[0,T]}^{0,(g^-,g^+)}(t_2) \in \mathrm{d} y_2 ) / \mathrm{d} y_1 \mathrm{d} y_2 \\
        &= \frac{ E[e^{-\frac{1}{2}N_{[0,t]}(W_{[0,t_1]}^{0,y_1,(g^-,g^+)})}] E[e^{-\frac{1}{2}N_{[t_1,t_2]}(W_{[t_1,t_2]}^{y_1,y_2,(g^-,g^+)})}] E[ e^{G(W_{[t_2,T]}^{y_2,(g^-,g^+)}(T))} e^{-\frac{1}{2}N_{[t_2,T]}(W_{[t_2,T]}^{y_2,(g^-,g^+)})}] }{E[ e^{G(W_{[0,T]}^{0,(g^-,g^+)}(T))} e^{-\frac{1}{2}N_{[0,T]}(W_{[0,T]}^{0,(g^-,g^+)})} ]} \\
        &\qquad \times k(t_1,y_1) k(t_1,y_1,t_2,y_2) .
    \end{aligned}
\end{equation}
Hence, we get
\begin{equation}\nonumber
    \begin{aligned}
        &P( X_{[0,T]}^{0,(g^-,g^+)}(t_2) \in \mathrm{d} y_2  \mid  X_{[0,T]}^{0,(g^-,g^+)}(t_1) = y_1 ) / \mathrm{d} y_2 \\
        &= \frac{ E[e^{-\frac{1}{2}N_{[t_1,t_2]}(W_{[t_1,t_2]}^{y_1,y_2,(g^-,g^+)})}] E[ e^{G(W_{[t_2,T]}^{y_2,(g^-,g^+)}(T))} e^{-\frac{1}{2}N_{[t_2,T]}(W_{[t_2,T]}^{y_2,(g^-,g^+)})}] }{E[ e^{G(W_{[t_1,T]}^{y_1,(g^-,g^+)}(T))} e^{-\frac{1}{2}N_{[t_1,T]}(W_{[t_1,T]}^{y_1,(g^-,g^+)})}]} \\
        &\qquad \times k(t_1,y_1,t_2,y_2) \\
        &=: k_{\mu}(t_1,y_1,t_2,y_2) .
    \end{aligned}
\end{equation}
For $0 < s < t < 1$, $x \in (g^-(s), g^+(s))$, and $y \in (g^-(t), g^+(t))$, we set
\begin{equation}\nonumber
    k_{\mu}(s,x,t,y;\varepsilon) \ \mathrm{d} y := P( X_{[0,T]}\mid_{K_{[0,T]}(g^- - \eta^-(\varepsilon), g^+)}(t) \in \mathrm{d} y  \mid  X_{[0,T]}\mid_{K_{[0,T]}(g^- - \eta^-(\varepsilon), g^+)}(s) = x ) ,
\end{equation}
\begin{equation}\nonumber
    k(s,x,t,y;\varepsilon) \ \mathrm{d} y := P( W_{[0,T]}\mid_{K_{[0,T]}(g^- - \eta^-(\varepsilon), g^+)}(t) \in \mathrm{d} y  \mid  W_{[0,T]}\mid_{K_{[0,T]}(g^- - \eta^-(\varepsilon), g^+)}(s) = x ) .
\end{equation}
Then, since $X_{[0,T]}\mid_{K_{[0,T]}(g^- - \eta^-(\varepsilon), g^+)}$ is a Markov process (cf. Proposition A.$1$ in \cite{ishitani_hatakenaka_suzuki}), we have the following equations
\begin{equation}\label{markov3}
    1 = \int_{g^-(t) - \eta^-(\varepsilon)}^{g^+(t)} k_{\mu}(s,x,t,y;\varepsilon) \ \mathrm{d} y ,
\end{equation}
\begin{equation}\label{markov4}
    k_{\mu}(s,x,u,z;\varepsilon) = \int_{g^-(t) - \eta^-(\varepsilon)}^{g^+(t)} k_{\mu}(s,x,t,y;\varepsilon) k_{\mu}(t,y,u,z;\varepsilon) \ \mathrm{d} y
\end{equation}
for any $0 < s < t < u < 1$, $x \in (g^-(s), g^+(s))$, and $z \in (g^-(u), g^+(u))$.
Here, by using Lemma \ref{girsanov2},
\begin{equation}\nonumber
    \begin{aligned}
        &P( X_{[0,T]}\mid_{K_{[0,T]}(g^- - \eta^-(\varepsilon), g^+)}(s) \in \mathrm{d} x ) \\
        &= P(X_{[0,s]}^{0 \to x} \in K_{[0,s]}(g^- - \eta^-(\varepsilon), g^+) ) P( X_{[s,T]}^{x} \in K_{[s,T]}(g^- - \eta^-(\varepsilon), g^+) ) \\
        &\times (P( X_{[0,T]} \in K(g^- - \eta^-(\varepsilon), g^+) ) )^{-1}  \\
        &\times P( X_{[0,T]}(s) \in \mathrm{d} x ) \\
        &= E[e^{-\frac{1}{2}N_{[0,s]}(W_{[0,s]}^{0 \to x}\mid_{K_{[0,s]}(g^- - \eta^-(\varepsilon), g^+)})}] E[ e^{G(W_{[s,T]}^{x}\mid_{K_{[s,T]}(g^- - \eta^-(\varepsilon))}(T))} e^{-\frac{1}{2}N_{[s,T]}(W_{[s,T]}^{x}\mid_{K_{[s,T]}(g^- - \eta^-(\varepsilon))})}] \\
        &\times ( E[ e^{G(W_{[0,T]}\mid_{K(g^- - \eta^-(\varepsilon), g^+)}(T))} e^{-\frac{1}{2}N_{[0,1]}(W_{[0,T]}\mid_{K(g^- - \eta^-(\varepsilon), g^+)})}] )^{-1} \\
        &\times P(W_{[0,s]}^{0 \to x} \in K_{[0,s]}(g^- - \eta^-(\varepsilon), g^+) ) P( W_{[s,T]}^{x} \in K_{[s,T]}(g^- - \eta^-(\varepsilon), g^+) ) \\
        &\times (P( W_{[0,T]} \in K(g^- - \eta^-(\varepsilon), g^+) ) )^{-1}  \\
        &\times P( W_{[0,T]}(s) \in \mathrm{d} x ) \\
        &= E[e^{-\frac{1}{2}N_{[0,s]}(W_{[0,s]}^{0 \to x}\mid_{K_{[0,s]}(g^- - \eta^-(\varepsilon), g^+)})}] E[ e^{G(W_{[s,T]}^{x}\mid_{K_{[s,T]}(g^- - \eta^-(\varepsilon), g^+)}(T))} e^{-\frac{1}{2}N_{[s,T]}(W_{[s,T]}^{x}\mid_{K_{[s,T]}(g^- - \eta^-(\varepsilon), g^+)})}] \\
        &\times ( E[ e^{G(W_{[0,T]}\mid_{K_{[0,T]}(g^- - \eta^-(\varepsilon), g^+)}(T))} e^{-\frac{1}{2}N_{[0,T]}(W_{[0,T]}\mid_{K_{[0,T]}(g^- - \eta^-(\varepsilon), g^+)})}] )^{-1} \\
        &\times P( W_{[0,T]}\mid_{K(g^- - \eta^-(\varepsilon), g^+)}(s) \in \mathrm{d} x )
    \end{aligned}
\end{equation}
and similarly
\begin{equation}\nonumber
    \begin{aligned}
        &P( X_{[0,T]}\mid_{K_{[0,T]}(g^- - \eta^-(\varepsilon), g^+)}(s) \in \mathrm{d} x , ~ X_{[0,T]}\mid_{K_{[0,T]}(g^- - \eta^-(\varepsilon), g^+)}(t) \in \mathrm{d} y ) \\
        &= E[e^{-\frac{1}{2}N_{[0,s]}(W_{[0,s]}^{0 \to x}\mid_{K_{[0,s]}(g^- - \eta^-(\varepsilon), g^+)})}] E[e^{-\frac{1}{2}N_{[s,t]}(W_{[s,t]}^{x \to y}\mid_{K_{[s,t]}(g^- - \eta^-(\varepsilon), g^+)})}] \\
        &\times E[ e^{G(W_{[t,T]}^{y}\mid_{K_{[t,T]}(g^- - \eta^-(\varepsilon), g^+)}(T))} e^{-\frac{1}{2}N_{[t,T]}(W_{[t,T]}^{y}\mid_{K_{[t,T]}(g^- - \eta^-(\varepsilon), g^+)})}] \\
        &\times ( E[ e^{G(W_{[0,T]}\mid_{K_{[0,T]}(g^- - \eta^-(\varepsilon), g^+)}(T))} e^{-\frac{1}{2}N_{[0,T]}(W_{[0,T]}\mid_{K_{[0,T]}(g^- - \eta^-(\varepsilon), g^+)})}] )^{-1} \\
        &\times P( W_{[0,T]}\mid_{K_{[0,T]}(g^- - \eta^-(\varepsilon), g^+)}(s) \in \mathrm{d} x , ~ W_{[0,T]}\mid_{K_{[0,T]}(g^- - \eta^-(\varepsilon), g^+)}(t) \in \mathrm{d} y ) .
    \end{aligned}
\end{equation}
Then, we have
\begin{equation}\nonumber
    \begin{aligned}
        &k_{\mu}(s,x,t,y;\varepsilon) \\
        &=P( X_{[0,T]}\mid_{K_{[0,T]}(g^- - \eta^-(\varepsilon), g^+)}(t) \in \mathrm{d} y  \mid  X_{[0,T]}\mid_{K_{[0,T]}(g^- - \eta^-(\varepsilon), g^+)}(s) = x ) \\
        &= E[e^{-\frac{1}{2}N_{[s,t]}(W_{[s,t]}^{x \to y}\mid_{K(g^- - \eta^-(\varepsilon), g^+)})}] E[ e^{G(W_{[t,T]}^{y}\mid_{K(g^- - \eta^-(\varepsilon), g^+)}(T))} e^{-\frac{1}{2}N_{[t,T]}(W_{[t,T]}^{y}\mid_{K(g^- - \eta^-(\varepsilon), g^+)})}] \\
        &\times ( E[ e^{G(W_{[s,T]}^{x}\mid_{K(g^- - \eta^-(\varepsilon), g^+)}(T))} e^{-\frac{1}{2}N_{[s,T]}(W_{[s,T]}^{x}\mid_{K(g^- - \eta^-(\varepsilon), g^+)})}] )^{-1} \\
        &\times k(s,x,t,y;\varepsilon) .
    \end{aligned}
\end{equation}
Now, we set
\begin{equation}\nonumber
    \begin{aligned}
        &\psi(s,x,t,y;\varepsilon) \\
        &:= \frac{E[e^{-\frac{1}{2}N_{[s,t]}(W_{[s,t]}^{x \to y}\mid_{K_{[s,t]}(g^- - \eta^-(\varepsilon), g^+)})}] E[ e^{G(W_{[t,T]}^{y}\mid_{K_{[t,T]}(g^- - \eta^-(\varepsilon), g^+)}(T))} e^{-\frac{1}{2}N_{[t,T]}(W_{[t,T]}^{y}\mid_{K_{[t,T]}(g^- - \eta^-(\varepsilon), g^+)})}]}{E[ e^{G(W_{[s,T]}^{x}\mid_{K_{[s,T]}(g^- - \eta^-(\varepsilon), g^+)}(T))} e^{-\frac{1}{2}N_{[s,T]}(W_{[s,T]}^{x}\mid_{K_{[s,T]}(g^- - \eta^-(\varepsilon), g^+)})}]}
    \end{aligned}
\end{equation}
and
\begin{equation}\nonumber
    \psi(s,x,t,y) := \frac{ E[e^{-\frac{1}{2}N_{[s,t]}(W_{[s,t]}^{x,y,(g^-,g^+)})}] E[e^{-\frac{1}{2}N_{[t,T]}(W_{[t,T]}^{y,(g^-,g^+)})}] }{E[ e^{-\frac{1}{2}N_{[s,T]}(W_{[s,T]}^{x,(g^-,g^+)})} ]} ,
\end{equation}
then
\begin{equation}\nonumber
    k_{\mu}(s,x,t,y;\varepsilon) = \psi(s,x,t,y;\varepsilon) k(s,x,t,y;\varepsilon)
\end{equation}
and
\begin{equation}\nonumber
    k_{\mu}(s,x,t,y) = \psi(s,x,t,y) k(s,x,t,y)
\end{equation}
hold.
Hence, we get
\begin{equation}\nonumber
    \begin{aligned}
        &\left\lvert \int_{g^-(t) - \eta^-(\varepsilon)}^{g^+(t) + \eta^+(\varepsilon)} k_{\mu}(s,x,t,y;\varepsilon) \ \mathrm{d} y - \int_{g^-(t)}^{g^+(t)} k_{\mu}(s,x,t,y) \ \mathrm{d} y \right\rvert \\
        &\leq \left\lvert \int_{g^-(t) - \eta^-(\varepsilon)}^{g^+(t) + \eta^+(\varepsilon)} \psi(s,x,t,y;\varepsilon) k(s,x,t,y;\varepsilon) \ \mathrm{d} y - \int_{g^-(t)}^{g^+(t)} \psi(s,x,t,y;\varepsilon) k(s,x,t,y) \ \mathrm{d} y  \right\rvert \\
        &+ \left\lvert \int_{g^-(t)}^{g^+(t)} \psi(s,x,t,y;\varepsilon) k(s,x,t,y) \ \mathrm{d} y - \int_{g^-(t)}^{g^+(t)} \psi(s,x,t,y) k(s,x,t,y) \ \mathrm{d} y \right\rvert \\
        &=: \five + \six .
    \end{aligned}
\end{equation}
Then, we can show $\five \to 0$,\ $\six \to 0$~($\varepsilon \downarrow 0$) in the same way as the proof of Theorem \ref{dif_hm}.
Therefore, we have
\begin{equation}\nonumber
    \begin{aligned}
        &\lim_{\varepsilon \downarrow 0} \int_{g^-(t) - \eta^-(\varepsilon)}^{g^+(t) + \eta^+(\varepsilon)} k_{\mu}(s,x,t,y;\varepsilon) \ \mathrm{d} y \\
        &= \int_{g^-(t)}^{g^+(t)} k_{\mu}(s,x,t,y) \ \mathrm{d} y .
    \end{aligned} 
\end{equation}
Combining the above with the equation \eqref{markov3}, we obtain
\begin{equation}\nonumber
    1 = \int_{g^-(t)}^{g^+(t)} k_{\mu}(s,x,t,y) \ \mathrm{d} y .
\end{equation}
Similarly, we have
\begin{equation}\nonumber
    \begin{aligned}
        &\left\lvert \int_{g^-(t) - \eta^-(\varepsilon)}^{g^+(t) + \eta^+(\varepsilon)} k_{\mu}(s,x,t,y;\varepsilon) k_{\mu}(t,y,u,z;\varepsilon) \ \mathrm{d} y - \int_{g^-(t)}^{g^+(t)} k_{\mu}(s,x,t,y) k_{\mu}(t,y,u,z) \ \mathrm{d} y \right\rvert \\
        &\leq \left\lvert \int_{g^-(t) - \eta^-(\varepsilon)}^{g^+(t) + \eta^+(\varepsilon)} \psi(s,x,t,y;\varepsilon) \psi(t,y,u,z;\varepsilon) k(s,x,t,y;\varepsilon) k(t,y,u,z;\varepsilon) \ \mathrm{d} y \right. \\
        &\left. - \int_{g^-(t)}^{g^+(t)} \psi(s,x,t,y;\varepsilon) \psi(t,y,u,z;\varepsilon) k(s,x,t,y) k(t,y,u,z) \ \mathrm{d} y  \right\rvert \\
        &+ \left\lvert \int_{g^-(t)}^{g^+(t)} \psi(s,x,t,y;\varepsilon) \psi(t,y,u,z;\varepsilon) k(s,x,t,y) k(t,y,u,z) \ \mathrm{d} y \right. \\
        &\left. - \int_{g^-(t)}^{g^+(t)} \psi(s,x,t,y) \psi(t,y,u,z) k(s,x,t,y) k(t,y,u,z) \right\rvert \\
        &=: \seven + \eight .
    \end{aligned}
\end{equation}
Now, we can show $\seven \to 0$,\ $\eight \to 0$~($\varepsilon \downarrow 0$) in the same way as the proof of Theorem \ref{dif_hm}.
Thus, we get
\begin{equation}\nonumber
    \begin{aligned}
        &\lim_{\varepsilon \downarrow 0} \int_{g^-(t) - \eta^-(\varepsilon)}^{g^+(t) + \eta^+(\varepsilon)} k_{\mu}(s,x,t,y;\varepsilon) k_{\mu}(t,y,u,z;\varepsilon) \ \mathrm{d} y\\
        &= \int_{g^-(t)}^{g^+(t)} k_{\mu}(s,x,t,y) k_{\mu}(t,y,u,z) \ \mathrm{d} y .
    \end{aligned} 
\end{equation}
Combining the above with the equation \eqref{markov4}, we obtain
\begin{equation}\nonumber
    k_{\mu}(s,x,u,z) = \int_{g^-(t)}^{g^+(t)} k_{\mu}(s,x,t,y) k_{\mu}(t,y,u,z) \ \mathrm{d} .
\end{equation}
Therefore, $X_{[0,T]}^{0,(g^-,g^+)}$ is a Markov process.

\section{Proof of Theorem \ref{hm_mea} and Corollary \ref{abscont}}
\subsection{Proof of Theorem \ref{hm_mea}}
By the Markov property of $X^{0 \to b}$, we have
\begin{equation}\nonumber
    \begin{aligned}
            &E[F(\pi_{[0,t]}(X^{0 \to b})) ; K(g^- - \eta^-(\varepsilon), g^+ + \eta^+(\varepsilon))] \\
            &= E[F(\pi_{[0,t]}(X^{0 \to b})) 1_{K(g^- - \eta^-(\varepsilon), g^+ + \eta^+(\varepsilon))}(X^{0 \to b}) ] \\
            &= E[ F(\pi_{[0,t]}(X^{0 \to b})) 1_{K_{[0,t]}(g^- - \eta^-(\varepsilon), g^+ + \eta^+(\varepsilon))}( \pi_{[0,t]} \circ X^{0 \to b}) \\
            &\times E[ 1_{K_{[t,1]}(g^- - \eta^-(\varepsilon), g^+ + \eta^+(\varepsilon))}( \pi_{[t,1]} \circ X^{0 \to b}) \mid X^{0 \to b}(t) ] ] .
    \end{aligned}
\end{equation}
Hence, we obtain
\begin{equation}\nonumber
    \begin{aligned}
        &E[F(\pi_{[0,t]}(X^{0 \to b})) ; K(g^- - \eta^-(\varepsilon), g^+ + \eta^+(\varepsilon))] \\
        &= \int_{C([0,1], \mathbb{R})} F(\pi_{[0,t]}(w)) 1_{K_{[0,t]}(g^- - \eta^-(\varepsilon), g^+ + \eta^+(\varepsilon))}(\pi_{[0,t]}(w)) \\
        &\times P( X_{[t,1]}^{w(t) \to b} \in K_{[t,1]}(g^- - \eta^-(\varepsilon), g^+ + \eta^+(\varepsilon)) ) P(X^{0 \to b} \in \mathrm{d} w) \\
        &= \int_{C([0,t], \mathbb{R})} F(w) 1_{K_{[0,t]}(g^- - \eta^-(\varepsilon), g^+ + \eta^+(\varepsilon))}(w) \\
        &\times P( X_{[t,1]}^{w(t) \to b} \in K_{[t,1]}(g^- - \eta^-(\varepsilon), g^+ + \eta^+(\varepsilon)) ) P( \pi_{[0,t]} \circ X^{0 \to b} \in \mathrm{d} w) .
    \end{aligned}
\end{equation}
Since
\begin{equation}\nonumber
    \begin{aligned}
        &P(\pi_{[0,t]} \circ X^{0 \to b} \in A) \\
        &= \frac{ P( \pi_{[0,t]} \circ X \in A, X(1) \in \mathrm{d} b ) }{ P( X(1) \in \mathrm{d} b ) } \\
        &= \frac{ E[ 1_{A}(X_{[0,t]}) P( X(1) \in \mathrm{d} b \mid X(t) ) ] }{ P(X(1) \in \mathrm{d} b ) } \\
        &= \int_A \frac{ P( X(1) \in \mathrm{d} b \mid X(t) = w(t) ) }{ P( X(1) \in \mathrm{d} b ) } P( X_{[0,t]} \in \mathrm{d} w )
    \end{aligned}
\end{equation}
for any $A \in \mathcal{B}(C([0,t], \mathbb{R}))$, we get
\begin{equation}\nonumber
    \begin{aligned}
        &\frac{ \mathrm{d} \left( P \circ \left( \pi_{[0,t]} \circ X^{0 \to b} \right)^{-1} \right) }{ \mathrm{d} \left( P \circ \left( X_{[0,t]} \right)^{-1} \right) } (w) \\
        &= \frac{ P( X(1) \in \mathrm{d} b \mid X(t) = w(t) ) }{ P( X(1) \in \mathrm{d} b ) } \\
        &= \frac{p_X(1-t,w(t),b)}{p_X(1,0,b)} .
    \end{aligned}
\end{equation}
Combining the above, we obtain
\begin{equation}\nonumber
    \begin{aligned}
        &E[F(\pi_{[0,t]}(X^{0 \to b})) ; K(g^- - \eta^-(\varepsilon), g^+ + \eta^+(\varepsilon))] \\
        &= \int_{C([0,t], \mathbb{R})} F(w) 1_{K_{[0,t]}(g^- - \eta^-(\varepsilon), g^+ + \eta^+(\varepsilon))}(w) \\
        &\times P( X_{[t,1]}^{w(t) \to b} \in K_{[t,1]}(g^- - \eta^-(\varepsilon), g^+ + \eta^+(\varepsilon)) ) \\
        &\times \frac{ p_X(1-t,w(t),b) }{ p_X(1,0,b) } P( X_{[0,t]} \in \mathrm{d} w) .
    \end{aligned}
\end{equation}
From the above, it holds that
\begin{equation}\nonumber
    \begin{aligned}
        &E[F( \pi_{[0,t]}( X^{0 \to b}\mid_{K(g^- - \eta^-(\varepsilon), g^+ + \eta^+(\varepsilon))} ) )] \\
        &= \frac{E[F(\pi_{[0,t]}(X^{0 \to b})) ; K(g^- - \eta^-(\varepsilon), g^+ + \eta^+(\varepsilon))]}{P( X^{0 \to b} \in K(g^- - \eta^-(\varepsilon), g^+ + \eta^+(\varepsilon)) )} \\
        &= \int_{C([0,t], \mathbb{R})} F(w) 1_{K_{[0,t]}(g^- - \eta^-(\varepsilon), g^+ + \eta^+(\varepsilon))}(w) \\
        &\times  \frac{ P( X_{[t,1]}^{w(t) \to b} \in K_{[t,1]}(g^- - \eta^-(\varepsilon), g^+ + \eta^+(\varepsilon)) ) p_X(1-t,w(t),b) }{ P( X^{0 \to b} \in K(g^- - \eta^-(\varepsilon), g^+ + \eta^+(\varepsilon)) ) p_X(1,0,b) } \\
        &\times P( X_{[0,t]} \in \mathrm{d} w) \\
        &= \int_{C([0,t], \mathbb{R})} F(w) \frac{ P( X_{[t,1]}^{w(t) \to b} \in K_{[t,1]}(g^- - \eta^-(\varepsilon), g^+ + \eta^+(\varepsilon)) ) p_X(1-t,w(t),b) }{ P( X^{0 \to b} \in K(g^- - \eta^-(\varepsilon), g^+ + \eta^+(\varepsilon)) ) p_X(1,0,b) } \\
        &\times P( X_{[0,t]} \in K_{[0,t]}(g^- - \eta^-(\varepsilon), g^+ + \eta^+(\varepsilon)) ) P( X_{[0,t]}\mid_{K_{[0,t]}(g^- - \eta^-(\varepsilon), g^+ + \eta^+(\varepsilon))} \in \mathrm{d} w ) .
    \end{aligned}
\end{equation}
Here, we decuced from Lemma \ref{girsanov} and \ref{girsanov2} that
\begin{equation}\nonumber
    \begin{aligned}
        &P( X_{[t,1]}^{w(t) \to b} \in K_{[t,1]}(g^- - \eta^-(\varepsilon), g^+ + \eta^+(\varepsilon)) ) p_X(1-t,w(t),b) \\
        &= e^{G(b) - G(w(t))} E[ e^{-\frac{1}{2}N_{[t,1]}(W^{w(t) \to b})} ; K_{[t,1]}(g^- - \eta^-(\varepsilon), g^+ + \eta^+(\varepsilon)) ] p_W(1-t,w(t),b) ,
    \end{aligned}
\end{equation}
\begin{equation}\nonumber
    \begin{aligned}
        &P( X_{[0,1]}^{0 \to b} \in K_{[0,1]}(g^- - \eta^-(\varepsilon), g^+ + \eta^+(\varepsilon)) ) p_X(1,0,b) \\
        &= e^{G(b) - G(0)} E[ e^{-\frac{1}{2}N_{[0,1]}(W^{0 \to b})} ; K_{[0,1]}(g^- - \eta^-(\varepsilon), g^+ + \eta^+(\varepsilon)) ] p_W(1,0,b) ,
    \end{aligned}
\end{equation}
and
\begin{equation}\nonumber
    \begin{aligned}
        &P( X_{[0,t]} \in K_{[0,t]}(g^- - \eta^-(\varepsilon), g^+ + \eta^+(\varepsilon)) ) \\
        &= e^{-G(0)} E[ e^{G(W_{[0,t]}(t))} e^{-\frac{1}{2}N_{[0,t]}(W_{[0,t]})} ; K_{[0,t]}(g^- - \eta^-(\varepsilon), g^+ + \eta^+(\varepsilon)) ]
    \end{aligned}
\end{equation}
hold.
Hence, we have
\begin{equation}\nonumber
    \begin{aligned}
        &E[F( \pi_{[0,t]}( X^{0 \to b}\mid_{K(g^- - \eta^-(\varepsilon), g^+ + \eta^+(\varepsilon))} ) )] \\
        &= \int_{C([0,t], \mathbb{R})} F(w) \frac{ E[ e^{-\frac{1}{2}N_{[t,1]}(W_{[t,1]}^{w(t) \to b}\mid_{K_{[t,1]}(g^- - \eta^-(\varepsilon), g^+ + \eta^+(\varepsilon))} )} ] }{ E[ e^{-\frac{1}{2}N_{[0,1]}(W^{0 \to b}\mid_{K(g^- - \eta^-(\varepsilon), g^+ + \eta^+(\varepsilon))})} ] } \\
        &\times \frac{ P( W_{[t,1]}^{w(t) \to b} \in K_{[t,1]}(g^- - \eta^-(\varepsilon), g^+ + \eta^+(\varepsilon)) ) p_W(1-t,w(t),b) }{ P( W^{0 \to b} \in K(g^- - \eta^-(\varepsilon), g^+ + \eta^+(\varepsilon)) ) p_W(1,0,b) } \\
        &\times e^{-G(w(t))} E[ e^{G(W_{[0,t]}\mid_{K_{[0,t]}(g^- - \eta^-(\varepsilon), g^+ + \eta^+(\varepsilon))}(t))} e^{-\frac{1}{2}N_{[0,t]}(W_{[0,t]}\mid_{K_{[0,t]}(g^- - \eta^-(\varepsilon), g^+ + \eta^+(\varepsilon))})} ] \\
        &\times P( W_{[0,t]} \in K_{[0,t]}(g^- - \eta^-(\varepsilon), g^+ + \eta^+(\varepsilon)) ) P( X_{[0,t]}\mid_{K_{[0,t]}(g^- - \eta^-(\varepsilon), g^+ + \eta^+(\varepsilon))} \in \mathrm{d} w ) .
    \end{aligned}
\end{equation}
Now,
\begin{equation}\nonumber
    \begin{aligned}
        &\lim_{\varepsilon \downarrow 0} \frac{P( W_{[0,t]} \in K_{[0,t]}(g^- - \eta^-(\varepsilon), g^+ + \eta^+(\varepsilon)) )}{ \eta^-(\varepsilon) } \\
        &= \sqrt{\frac{2}{\pi t}} E\left[\widetilde{Z}_{[0,t]}^{g^-} \left( W_{[0,t]}^+\mid_{K_{[0,t]}^-(g^+ - g^-)} \right)^{-1} \right] P(W_{[0,t]}^+ \in K_{[0,t]}^-(g^+ - g^-)) ,
    \end{aligned}
\end{equation}
\begin{equation}\nonumber
    \begin{aligned}
        &\lim_{\varepsilon \downarrow 0} \frac{P( W_{[t,1]}^{w(t) \to b} \in K_{[t,1]}(g^- - \eta^-(\varepsilon), g^+ + \eta^+(\varepsilon)) ) p_W(1-t,w(t),b)}{ \eta^+(\varepsilon) } \\
        &= \sqrt{\frac{2}{\pi (1-t)}} q_{[t,1]}^{(g^-,g^+),(\downarrow)}(w(t)) ,
    \end{aligned}
\end{equation}
and
\begin{equation}\nonumber
    \begin{aligned}
        &\lim_{\varepsilon \downarrow 0} \frac{P( W^{0 \to b} \in K(g^- - \eta^-(\varepsilon), g^+ + \eta^+(\varepsilon)) ) p_W(1,0,b)}{ \eta^-(\varepsilon) \eta^+(\varepsilon) } \\
        &= \frac{2}{\pi} C_{g^-,g^+}
    \end{aligned}
\end{equation}
are shown in \cite{ishitani_hatakenaka_suzuki}.
From the above and the weak convergence of the conditioned Brownian motion to the Brownian meander $W_{[0,t]}^{0,(g^-,g^+)}$~\cite{ishitani_hatakenaka_suzuki}, we can deduce that
\begin{equation}\nonumber
    \begin{aligned}
        &E[F( \pi_{[0,t]}(H_{\mu}^{g^- \to g^+}) )] \\
        &= \lim_{\varepsilon \downarrow 0} E[F( \pi_{[0,t]}( X^{0 \to b}\mid_{K(g^- - \eta^-(\varepsilon), g^+ + \eta^+(\varepsilon))} ) )] \\
        &= \int_{C([0,t], \mathbb{R})} F(w) e^{-G(w(t))} \frac{ E[e^{G(W_{[0,t]}^{0,(g^-,g^+)}(t))} e^{-\frac{1}{2}N_{[0,t]}(W_{[0,t]}^{0,(g^-,g^+)})} ] E[e^{-\frac{1}{2}N_{[t,1]}(W_{[t,1]}^{w(t),b,(g^-,g^+)}) }] }{ E[e^{-\frac{1}{2}N_{[0,1]}(H^{g^- \to g^+})}] } \\
        &\times \frac{ E\left[\widetilde{Z}_{[0,t]}^{g^-} \left( W_{[0,t]}^+\mid_{K_{[0,t]}^-(g^+ - g^-)} \right)^{-1} \right] P(W_{[0,t]}^+ \in K_{[0,t]}^-(g^+ - g^-))  q_{[t,1]}^{(g^-,g^+),(\downarrow)}(w(t)) }{ C_{g^-,g^+} \sqrt{t} \sqrt{1-t} } \\
        &\times P( X_{[0,t]}^{0,(g^-,g^+)} \in \mathrm{d} w ) .
    \end{aligned}
\end{equation}
Thus, we obtain \eqref{hm_mea_radon-nikodym}.

\subsection{Proof of Corollary \ref{abscont}}
From Proposition \ref{dif_mea}, we have
\begin{equation}\nonumber
    \begin{aligned}
        &\frac{ \mathrm{d} \left( P \circ \left( X_{[0,t]}^{0,(g^-,g^+)} \right)^{-1} \right) }{ \mathrm{d} \left( P \circ \left( W_{[0,t]}^{0,(g^-,g^+)} \right)^{-1} \right) } (w) \\
        &= \frac{ e^{G(w(t))} e^{-\frac{1}{2}N_{[0,t]}(w)} }{ E[ e^{G(W_{[0,t]}^{0,(g^-,g^+)}(t))} e^{-\frac{1}{2}N_{[0,t]}(W_{[0,t]}^{0,(g^-,g^+)})} ] } .
    \end{aligned}
\end{equation}
On the other hand,
\begin{equation}\nonumber
    \begin{aligned}
        &\frac{ \mathrm{d} \left( P \circ \left( W_{[0,t]}^{0,(g^-,g^+)} \right)^{-1} \right) }{ \mathrm{d} \left( P \circ \left( W_{[0,t]}^+\mid_{K_{[0,t]}^-(g^+ - g^-)} + g^- \right)^{-1} \right) } (w) \\
        &= \frac{ \left( Z_{[0,t]}^{g^-}(w) \right)^{-1} }{ E\left[ \widetilde{Z}_{[0,t]}^{g^-}\left( W_{[0,t]}^+\mid_{K_{[0,t]}^-(g^+ - g^-)} \right)^{-1} \right] }
    \end{aligned}
\end{equation}
is shown in \cite{ishitani_hatakenaka_suzuki}, and we obtain
\begin{equation}\nonumber
    \begin{aligned}
        &\frac{ \mathrm{d} \left( P \circ \left( W_{[0,t]}^+\mid_{K_{[0,t]}^-(g^+ - g^-)} + g^- \right)^{-1} \right) }{\mathrm{d} \left( P \circ \left( R_{[0,t]} + g^- \right)^{-1} \right)} (w) \\
        &= \frac{ 1_{K_{[0,t]}^-(g^+ - g^-)}(w - g^-) }{ P( W_{[0,t]}^+ \in K_{[0,t]}^-(g^+ - g^-) ) } \frac{ \mathrm{d} \left( P \circ \left( W_{[0,t]}^+ + g^- \right)^{-1} \right) }{\mathrm{d} \left( P \circ \left( R_{[0,t]} + g^- \right)^{-1} \right)} (w) \\
        &= \frac{ 1_{K_{[0,t]}^-(g^+)}(w) }{ P( W_{[0,t]}^+ \in K_{[0,t]}^-(g^+ - g^-) ) } \frac{1}{w(t) - g^-(t)} \sqrt{\frac{\pi t}{2}}
    \end{aligned}
\end{equation}
by Imhof relation~\cite{imhof}.
Therefore, we can deduce that the distribution of $X_{[0,t]}^{0,(g^-,g^+)}$ is absolutely continuous with respect to $R_{[0,t]} + g^-$.
Moreover, \eqref{abscont_consistency} is obvious from measure theory, but it can also be calculated directly.

\section{Proof of Proposition \ref{hoelder}}
We prepare the following lemma for the proof of the regularity of the sample path of the diffusion house-moving.
\begin{lem}
    \label{moment}
    For each $m_0 > 0$, we can find a constant $C_{m_0,g^+,g^-} > 0$ such that
    \begin{equation}\label{moment1}
        E\left[ \left\lvert H^{g^- \to g^+}(r) \right\rvert^{2m_0} \right] \leq \frac{C_{m_0,g^-,g^+}}{ r^{1-m_0} (1-r) } ,
    \end{equation}
    \begin{equation}\label{moment2}
        E\left[ \left\lvert H^{g^- \to g^+}(1-r) - b \right\rvert^{2m_0} \right] \leq \frac{C_{m_0,g^-,g^+}}{ (1-r)^{1-m_0} r } ,
    \end{equation}
    and
    \begin{equation}\label{moment3}
        E\left[ \left\lvert H^{g^- \to g^+}(t) - H^{g^- \to g^+}(s) \right\rvert^{2m_0} \right] \leq \frac{C_{m_0,g^-,g^+}}{ s (1-t) (t-s)^{-m_0} }
    \end{equation}
    hold for every $0 < r < 1, 0 < s < t < 1$.
\end{lem}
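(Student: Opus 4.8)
The plan is to evaluate each moment directly against the explicit laws of $H^{g^- \to g^+}$ supplied by Theorem~\ref{dif_hm} and \cite{ishitani_hatakenaka_suzuki}, and to reduce every quantity to elementary Gaussian/meander moment integrals. Two structural facts drive the computation. First, by the definitions of $q_{[0,t]}^{(g^-,g^+),(\uparrow)}$ and $q_{[t,1]}^{(g^-,g^+),(\downarrow)}$, each is a \emph{uniformly bounded} correction factor times a Brownian-meander endpoint density: $q_{[0,t]}^{(g^-,g^+),(\uparrow)}(y)=A(t,y)\,\frac{y-g^-(t)}{t}e^{-(y-g^-(t))^2/(2t)}$ and $q_{[t,1]}^{(g^-,g^+),(\downarrow)}(y)=B(t,y)\,\frac{g^+(t)-y}{1-t}e^{-(g^+(t)-y)^2/(2(1-t))}$, with $0\le A,B\le M_0$ for a constant $M_0$ independent of $t,y$. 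Second, dropping the containment indicator gives $p_{[s,t]}^{(g^-,g^+)}(y_1,y_2)\le n_{t-s}(y_2-y_1)$. I will repeatedly use the pointwise bound $\tfrac{w}{\sigma}e^{-w^2/(2\sigma)}\le e^{-1/2}\sigma^{-1/2}$ and the meander moment identity $\int_0^\infty w^{2m_0}\cdot\tfrac{w}{\sigma}e^{-w^2/(2\sigma)}\,\mathrm{d}w=c_{m_0}\sigma^{m_0}$.

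For \eqref{moment1} I write $E[|H^{g^- \to g^+}(r)|^{2m_0}]=\int_{g^-(r)}^{g^+(r)}|y|^{2m_0}h(r,y)\,\mathrm{d}y$. Bounding the far factor by $\tfrac{1}{\sqrt{1-r}}q_{[r,1]}^{(g^-,g^+),(\downarrow)}(y)\le M_0e^{-1/2}/(1-r)$, splitting $|y|^{2m_0}\le 2^{2m_0-1}(|y-g^-(r)|^{2m_0}+|g^-(r)|^{2m_0})$, and integrating the remaining meander density in $u=y-g^-(r)$ yields $E[|H^{g^- \to g^+}(r)|^{2m_0}]\le C\,(r^{m_0}+|g^-(r)|^{2m_0})/((1-r)\sqrt r)$. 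Since $g^-(0)=0$ and $g^-\in C^2$ force $|g^-(r)|^{2m_0}=O(r^{2m_0})$, the right-hand side is $O(r^{m_0-1/2}/(1-r))$, which is dominated by $C_{m_0,g^-,g^+}/(r^{1-m_0}(1-r))$ because $r^{m_0-1/2}\le r^{m_0-1}$ on $(0,1)$. (Alternatively, \eqref{moment1} may be read off from the Brownian analogue of Corollary~\ref{cor3}, i.e. Theorem~$3.2$ of \cite{ishitani_hatakenaka_suzuki}, together with the Bessel($3$) scaling $E[R_{[0,r]}(r)^p]=c_p r^{p/2}$.) Estimate \eqref{moment2} is the mirror statement: under the reversal $t\mapsto H^{g^- \to g^+}(1-t)$, which is a house-moving attached to $\overleftarrow{g^+},\overleftarrow{g^-}$, the quantity $b-H^{g^- \to g^+}(1-r)$ plays the role of $H^{g^- \to g^+}(r)-g^-(r)$ above, so the identical computation — now using the short-interval factor $q_{[1-r,1]}^{(g^-,g^+),(\downarrow)}$ and $|g^+(1-r)-b|^{2m_0}=O(r^{2m_0})$ — establishes \eqref{moment2}.

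The increment bound \eqref{moment3} is the heart of the matter. With the one- and two-point densities, $E[|H^{g^- \to g^+}(t)-H^{g^- \to g^+}(s)|^{2m_0}]=\int_{g^-(s)}^{g^+(s)}\!\int_{g^-(t)}^{g^+(t)}|y_2-y_1|^{2m_0}\,h(s,y_1)\,h(s,y_1,t,y_2)\,\mathrm{d}y_2\,\mathrm{d}y_1$. The crucial observation is that the factor $\tfrac{1}{\sqrt{1-s}}q_{[s,1]}^{(g^-,g^+),(\downarrow)}(y_1)$ in $h(s,y_1)$ cancels exactly against the denominator of $h(s,y_1,t,y_2)=p_{[s,t]}^{(g^-,g^+)}(y_1,y_2)\,\tfrac{1}{\sqrt{1-t}}q_{[t,1]}^{(g^-,g^+),(\downarrow)}(y_2)\big/\big(\tfrac{1}{\sqrt{1-s}}q_{[s,1]}^{(g^-,g^+),(\downarrow)}(y_1)\big)$, so no lower bound on $q^{(\downarrow)}$ is ever needed. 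What remains are $\tfrac{1}{\sqrt s}q_{[0,s]}^{(g^-,g^+),(\uparrow)}(y_1)\le M_0e^{-1/2}/s$ (uniformly in $y_1$), $p_{[s,t]}^{(g^-,g^+)}(y_1,y_2)\le n_{t-s}(y_2-y_1)$, and $\tfrac{1}{\sqrt{1-t}}q_{[t,1]}^{(g^-,g^+),(\downarrow)}(y_2)\le M_0e^{-1/2}/(1-t)$ (uniformly in $y_2$). The Gaussian moment $\int_{\mathbb{R}}|y_2-y_1|^{2m_0}n_{t-s}(y_2-y_1)\,\mathrm{d}y_2\le c_{m_0}(t-s)^{m_0}$ supplies the factor $(t-s)^{m_0}$, the $q^{(\uparrow)}$-term supplies $1/s$, the $q^{(\downarrow)}$-term supplies $1/(1-t)$, and integrating the leftover constant over the bounded strip $y_1\in(g^-(s),g^+(s))$ contributes only the bounded length $g^+(s)-g^-(s)$, so the product is exactly $C_{m_0,g^-,g^+}(t-s)^{m_0}/(s(1-t))$, which is \eqref{moment3}.

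The main obstacle is the uniform bound $0\le A,B\le M_0$ over all $t\in(0,1)$ and all admissible $y$, since it is precisely this uniformity that lets every constant above be chosen independently of $s,t,r$. I would prove it as follows. In $A(t,y)=E[\widetilde{Z}_{[0,t]}^{g^- - g^-(0)}(R_{[0,t]}^{0\to y-g^-(t)}\mid_{K_{[0,t]}^{-}(g^+-g^-)})^{-1}]\,P(R_{[0,t]}^{0\to y-g^-(t)}\in K_{[0,t]}^{-}(g^+-g^-))$ the probability is at most $1$, while on the event $K_{[0,t]}^{-}(g^+-g^-)$ the Bessel bridge is confined to $0\le R(u)\le (g^+-g^-)(u)\le\Delta_{\max}$; combined with the uniform bounds on $g^-,(g^-)',(g^-)''$ over the compact interval $[0,1]$, each term in the exponent defining $\widetilde{Z}^{-1}$ (the boundary terms $g'(\cdot)R(\cdot)$, the drift integral $\int R\,g''$, and the quadratic $\tfrac12\int (g')^2$) is bounded by a constant, whence $\widetilde{Z}^{-1}$ and its expectation are bounded uniformly; the same argument with $g^+(1)-\overleftarrow{g^+}$ on $[t,1]$ handles $B$. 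Since $C_{g^-,g^+}>0$ is a fixed normalizing constant, once this uniform control is in place all three estimates follow from the displayed Gaussian and meander moment computations.
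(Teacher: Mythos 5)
Your strategy coincides with the paper's: you bound the Cameron--Martin correction $E[\widetilde{Z}^{-1}]\,P(R\in K^-)$ uniformly in $(t,y)$ (your $M_0$ is the paper's $D^{(1)}_{g^-,g^+},D^{(2)}_{g^-,g^+}$), so that $q^{(\uparrow)},q^{(\downarrow)}$ are dominated by meander endpoint densities, and you then reduce everything to Gaussian/meander moment integrals. Your cancellation of $\tfrac{1}{\sqrt{1-s}}q^{(\downarrow)}_{[s,1]}(y_1)$ in $h(s,y_1)h(s,y_1,t,y_2)$ reproduces exactly the paper's two-point density $C_{g^-,g^+}^{-1}\tfrac{1}{\sqrt s}q^{(\uparrow)}_{[0,s]}(x)\,p^{(g^-,g^+)}_{[s,t]}(x,y)\,\tfrac{1}{\sqrt{1-t}}q^{(\downarrow)}_{[t,1]}(y)$, and $p^{(g^-,g^+)}_{[s,t]}\le n_{t-s}$ is also the paper's bound. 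Your proofs of \eqref{moment1} and \eqref{moment3} are correct, and in places slightly sharper: you integrate the meander density exactly (getting $r^{m_0}/\sqrt r$ where the paper bounds the linear factor by the strip width and gets $r^{m_0}/r$), and you handle the recentering $y\mapsto y-g^-(r)$ by a binomial splitting plus $|g^-(r)|=O(r)$, where the paper absorbs the shifts into constants $c_{g^-},c_{g^+}$.

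The genuine problem is \eqref{moment2}. Your mirror computation yields $E[|H^{g^-\to g^+}(1-r)-b|^{2m_0}]\le C\,r^{m_0-1/2}/(1-r)$, and this does \emph{not} imply the stated bound $C/((1-r)^{1-m_0}r)$: the required comparison $r^{m_0+1/2}\le C(1-r)^{m_0}$ fails as $r\uparrow 1$. No argument can close this gap, because \eqref{moment2} as printed is false for $m_0>1$ (whenever $b\neq 0$): as $r\uparrow 1$, path continuity and boundedness give $E[|H^{g^-\to g^+}(1-r)-b|^{2m_0}]\to|b|^{2m_0}>0$, while the stated right-hand side $C(1-r)^{m_0-1}/r$ tends to $0$. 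The inequality is a typo --- the exponent $1-m_0$ belongs on $r$, the distance to the terminal time, i.e.\ the intended statement is $E[|H^{g^-\to g^+}(1-r)-b|^{2m_0}]\le C/(r^{1-m_0}(1-r))$, which is precisely what your computation proves and is also what the proof of Proposition \ref{hoelder2} actually uses for $a(n,2^n)$ (take $r=2^{-n}$; the printed form would only give $a(n,2^n)\lesssim 2^{n(1+2m_0\gamma)}$, which is useless there). The paper's own proof contains the matching slip: its density bound produces the factors $n_{1-r}(z)\,n_r(b-z)$, yet it integrates $|z-b|^{2m_0}$ against $n_{1-r}(z-b)$ --- a Gaussian appearing nowhere in that bound --- and so obtains $(2(1-r))^{m_0}$ where the correct pairing gives $(2r)^{m_0}$. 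In short: your method is sound and establishes the correct (and needed) inequality, but as written your proof asserts that the "identical computation establishes \eqref{moment2}" verbatim, which your displayed bound does not do; carrying out the final comparison, as you did for \eqref{moment1}, would have exposed the discrepancy and the typo.
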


\begin{proof}
    By the definition of the Cameron--Martin density, we have
    \begin{equation}\nonumber
        \begin{aligned}
            &\widetilde{Z}_{[0,t]}^{g^- - g^-(0)}( r_{[0,t]}^{0 \to y - g^-(t)}\mid_{K_{[0,t]}^-(g^+ - g^-)} )^{-1} \\
            &= \exp \left( - g^{-'}(t) (y-g^-(t)) + \int_0^t r_{[0,t]}^{0 \to y - g^-(t)}\mid_{K_{[0,t]}^-(g^+ - g^-)}(u) g^{-''}(u) \ \mathrm{d} u + \frac{1}{2} \int_0^t (g^{-'}(u))^2 \ \mathrm{d} u  \right) \\
            &\leq \exp \left(  \sup_{t \in [0,1]} \lvert g^{-'}(t) \rvert \lvert g^+(t) - g^-(t) \rvert + \sup_{t \in [0,1]} \sup_{u \in [0,t]} t \lvert g^+(u) - g^-(u) \rvert \lvert g^{-''}(u) \rvert \right. \\
            &\left. + \frac{1}{2} \sup_{t \in [0,1]} \sup_{u \in [0,t]} t \lvert g^{-'}(u) \rvert^2 \right) \\
            &=: D_{g^-,g^+}^{(1)} ,
        \end{aligned}
    \end{equation}
    and
    \begin{equation}\nonumber
        \begin{aligned}
            &\widetilde{Z}_{[t,1]}^{g^+(1) - \overleftarrow{g}^+}( r_{[t,1]}^{0 \to g^+(t) - y}\mid_{K_{[t,1]}^-(\overleftarrow{g}^+ - \overleftarrow{g}^-)} )^{-1} \\
            &= \exp \left(  \overleftarrow{g}^{+'}(1) (g^+(t) - y) - \int_t^1 r_{[t,1]}^{0 \to g^+(t) - y}\mid_{K_{[t,1]}^-(\overleftarrow{g}^+ - \overleftarrow{g}^-)}(u) \overleftarrow{g}^{+''}(u) \ \mathrm{d} u + \frac{1}{2} \int_t^1 (\overleftarrow{g}^{+'}(u))^2 \ \mathrm{d} u  \right) \\
            &\leq \exp \left(  \sup_{t \in [0,1]} \lvert \overleftarrow{g}^{+'}(1) \rvert \lvert g^+(t) - g^-(t) \rvert + \sup_{t \in [0,1]} \sup_{u \in [t,1]} (1-t) \lvert \overleftarrow{g}^+(u) - \overleftarrow{g}^-(u) \rvert \lvert \overleftarrow{g}^{+''}(u) \rvert \right. \\
            &\left. + \frac{1}{2} \sup_{t \in [0,1]} \sup_{u \in [t,1]} (1-t) \lvert \overleftarrow{g}^{+'}(u) \rvert^2 \right) \\
            &=: D_{g^-,g^+}^{(2)} .
        \end{aligned}
    \end{equation}
    Then, we get
    \begin{equation}\nonumber
        \begin{aligned}
            &P( H^{g^- \to g^+}(r) \in \mathrm{d} x ) \\
            &= ( C_{g^-,g^+} )^{-1} \frac{1}{\sqrt{r}} q_{[0,r]}^{(g^-,g^+),(\uparrow)}(x) \frac{1}{\sqrt{1-r}} q_{[r,1]}^{(g^-,g^+),(\downarrow)}(x) \\
            &\leq \frac{D_{g^-,g^+}^{(1)} D_{g^-,g^+}^{(2)}}{C_{g^-,g^+}} \frac{1}{\sqrt{r}} \frac{1}{\sqrt{1-r}} \frac{ P(W_{[0,r]}^+(r) \in \mathrm{d} x - g^-(r)) }{\mathrm{d} x} \frac{ P(W_{[r,1]}^+(1) \in g^+(r) - \mathrm{d} x ) }{\mathrm{d} x} \\
            &= \frac{D_{g^-,g^+}^{(1)} D_{g^-,g^+}^{(2)}}{C_{g^-,g^+}} \frac{1}{\sqrt{r}} \frac{1}{\sqrt{1-r}} \\
            &\times \sqrt{2\pi} \frac{x - g^-(r)}{\sqrt{r}} n_r(x - g^-(r)) \sqrt{2\pi} \frac{g^+(r) - x}{\sqrt{1-r}} n_{1-r}(g^+(r) - x) .
        \end{aligned}
    \end{equation}
    On the other hand, we can find constants $c_{g^-} > 0$ and $c_{g^+} > 0$ such that
    \begin{equation}\nonumber
        \begin{aligned}
            n_r(x - g^-(r)) &= \frac{1}{\sqrt{2\pi}} \exp \left( -\frac{(x - g^-(r))^2}{2r} \right) \\
            &= \frac{1}{2\pi} \exp \left( -\frac{x^2}{2r} \right) \exp \left( \frac{2g^-(r)x - g^-(r)^2}{2r} \right) \\
            &\leq c_{g^-} n_r(x) ,
        \end{aligned}
    \end{equation}
    \begin{equation}\nonumber
        \begin{aligned}
            n_{1-r}(g^+(r) - x) &= \frac{1}{\sqrt{2\pi}} \exp \left( -\frac{(g^+(r) - x)^2}{2(1-r)} \right) \\
            &= \frac{1}{\sqrt{2\pi}} \exp \left( -\frac{(b-x)^2}{2(1-r)} \right) \exp \left( \frac{ 2(b-x)(g^+(r) - b) - (g^+(r) - b)^2 }{2(1-r)} \right) \\
            &\leq c_{g^+} n_{1-r}(b-x) .
        \end{aligned}
    \end{equation}
    Hence, we obtain
    \begin{equation}\nonumber
        \begin{aligned}
            &P( H^{g^- \to g^+}(r) \in \mathrm{d} x ) \\
            &\leq \frac{2\pi D_{g^-,g^+}^{(1)} D_{g^-,g^+}^{(2)} c_{g^-} c_{g^+}}{C_{g^-,g^+}} \frac{(g^+(r) - g^-(r))^2}{r(1-r)} n_r(x) n_{1-r}(b-x) .
        \end{aligned}
    \end{equation}
    Therefore, we get
    \begin{equation}\nonumber
        \begin{aligned}
            &E\left[ \left\lvert H^{g^- \to g^+}(r) \right\rvert^{2m_0} \right] \\
            &\leq \frac{2\pi D_{g^-,g^+}^{(1)} D_{g^-,g^+}^{(2)} c_{g^-} c_{g^+}}{C_{g^-,g^+}} \frac{(g^+(r) - g^-(r))^2}{r(1-r)} \int_{g^-(r)}^{g^+(r)} \lvert z \rvert^{2m_0} n_r(z) \ \mathrm{d} z \\
            &\leq \frac{2\pi D_{g^-,g^+}^{(1)} D_{g^-,g^+}^{(2)} c_{g^-} c_{g^+}}{C_{g^-,g^+}} \frac{(g^+(r) - g^-(r))^2}{r(1-r)} \frac{(2r)^{m_0}}{\sqrt{\pi}} \Gamma \left( m_0 + \frac{1}{2} \right) \\
            &\leq \frac{C_{m_0,g^-,g^+}}{ r^{1-m_0} (1-r) } ,
        \end{aligned}
    \end{equation}
    and
    \begin{equation}\nonumber
        \begin{aligned}
            &E\left[ \left\lvert H^{g^- \to g^+}(1-r) - b \right\rvert^{2m_0} \right] \\
            &\leq \frac{2\pi D_{g^-,g^+}^{(1)} D_{g^-,g^+}^{(2)} c_{g^-} c_{g^+}}{C_{g^-,g^+}} \frac{(g^+(1-r) - g^-(1-r))^2}{r(1-r)} \int_{g^-(1-r)}^{g^+(1-r)} \lvert z - b \rvert^{2m_0} n_{1-r}(z-b) \ \mathrm{d} z \\
            &\leq \frac{2\pi D_{g^-,g^+}^{(1)} D_{g^-,g^+}^{(2)} c_{g^-} c_{g^+}}{C_{g^-,g^+}} \frac{(g^+(1-r) - g^-(1-r))^2}{r(1-r)} \frac{(2(1-r))^{m_0}}{\sqrt{\pi}} \Gamma \left( m_0 + \frac{1}{2} \right) \\
            &\leq \frac{C_{m_0,g^-,g^+}}{ (1-r)^{1-m_0} r } .
        \end{aligned}
    \end{equation}
    Thus, \eqref{moment1} and \eqref{moment2} hold.
    Similarly, since
    \begin{equation}\nonumber
        \begin{aligned}
            &P( H^{g^- \to g^+}(s) \in \mathrm{d} x, ~ H^{g^- \to g^+}(t) \in \mathrm{d} y ) \\
            &= ( C_{g^-,g^+} )^{-1} \frac{1}{\sqrt{s}} q_{[0,s]}^{(g^-,g^+),(\uparrow)}(x) p_{[s,t]}^{(g^-,g^+)}(x,y) \frac{1}{\sqrt{1-t}} q_{[t,1]}^{(g^-,g^+),(\downarrow)}(y) \\
            &\leq \frac{2\pi D_{g^-,g^+}^{(1)} D_{g^-,g^+}^{(2)} c_{g^-} c_{g^+}}{C_{g^-,g^+}} \frac{(g^+(s) - g^-(s)) (g^+(t) - g^-(t))}{s(1-t)} n_s(x) n_{1-t}(b-y) n_{t-s}(y-x) ,
        \end{aligned}
    \end{equation}
    we obtain
    \begin{equation}\nonumber
        \begin{aligned}
            &E\left[ \left\lvert H^{g^- \to g^+}(t) - H^{g^- \to g^+}(s) \right\rvert^{2m_0} \right] \\
            &\leq \frac{2\pi D_{g^-,g^+}^{(1)} D_{g^-,g^+}^{(2)} c_{g^-} c_{g^+}}{C_{g^-,g^+}} \frac{(g^+(s) - g^-(s)) (g^+(t) - g^-(t))}{s(1-t)} \int_{g^-(t)}^{g^+(t)} \int_{g^-(s)}^{g^+(s)} \lvert y - x \rvert^{2m_0}  n_{t-s}(y-x) \ \mathrm{d} x \mathrm{d} y \\
            &\leq \frac{2\pi D_{g^-,g^+}^{(1)} D_{g^-,g^+}^{(2)} c_{g^-} c_{g^+}}{C_{g^-,g^+}} \frac{(g^+(s) - g^-(s)) (g^+(t) - g^-(t))^2 }{s(1-t)} \frac{(2(t-s))^{m_0}}{\sqrt{\pi}} \Gamma \left( m_0 + \frac{1}{2} \right) \\
            &\leq \frac{C_{m_0,g^-,g^+}}{ s (1-t) (t-s)^{-m_0} } .
        \end{aligned}
    \end{equation}
    Thus, \eqref{moment3} holds.
\end{proof}

Applying Lemma \ref{moment}, we first prove the regularity of the sample path of the Brownian house-moving $H^{g^- \to g^+}$.
\begin{prop}
    \label{hoelder2}
    For every $\gamma \in (0,\frac{1}{2})$, the path of $H^{g^- \to g^+}$ on $[0,1]$ is locally H\"older continuous with exponent $\gamma$, i.e.
    \begin{equation}\nonumber
        P\left( \bigcup_{n=1}^{\infty} \left\{ \sup_{\substack{t,s \in [0,1] \\ 0 < \lvert t - s \rvert < 1/n}} \frac{ \left\lvert H^{g^- \to g^+}(t) - H^{g^- \to g^+}(s) \right\rvert }{\lvert t - s \rvert^{\gamma}} < \infty \right\} \right) = 1
    \end{equation}
\end{prop}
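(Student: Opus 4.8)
The plan is to establish the result by a Kolmogorov--Chentsov type chaining argument driven by the three moment estimates of Lemma~\ref{moment}. Fix $\gamma \in (0,\frac{1}{2})$ and choose a real exponent $m_0 > \frac{1}{1-2\gamma}$; note this forces $m_0 > 1$ and is equivalent to $\gamma < \frac{m_0-1}{2m_0}$. For each $n \geq 1$ and $0 \leq k \leq 2^n - 1$ introduce the dyadic points $t_k^{(n)} := k\,2^{-n}$ and the events
\[
A_{n,k} := \left\{ \left\lvert H^{g^- \to g^+}(t_{k+1}^{(n)}) - H^{g^- \to g^+}(t_k^{(n)}) \right\rvert > 2^{-n\gamma} \right\}.
\]
The key structural fact I would exploit is that $H^{g^- \to g^+}(0) = g^-(0) = 0$ and $H^{g^- \to g^+}(1) = g^+(1) = b$ are deterministic, so the two boundary increments ($k=0$ and $k=2^n-1$) are increments measured from a fixed endpoint and can be controlled by the one-point bounds \eqref{moment1} and \eqref{moment2} instead of \eqref{moment3}.

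Next I would estimate $P(A_{n,k})$ by Chebyshev's inequality at order $2m_0$. For the boundary indices, applying \eqref{moment1} and \eqref{moment2} with $r = 2^{-n}$ and using $1 - 2^{-n} \geq \frac{1}{2}$ gives $P(A_{n,0}) + P(A_{n,2^n-1}) \leq C\,2^{-n(m_0(1-2\gamma)-1)}$, whose exponent is strictly positive by the choice of $m_0$. For an interior index $1 \leq k \leq 2^n-2$, \eqref{moment3} with $s = k2^{-n}$ and $t = (k+1)2^{-n}$ yields
\[
P(A_{n,k}) \leq C\, 2^{(2\gamma-1)m_0 n + 2n}\, \frac{1}{k(2^n - k - 1)}.
\]
Summing over $k$ and using the partial-fraction identity $\frac{1}{k(M-k)} = \frac{1}{M}\bigl(\frac{1}{k} + \frac{1}{M-k}\bigr)$ with $M = 2^n - 1$, the interior sum $\sum_k \frac{1}{k(2^n-k-1)}$ is of harmonic-number order $n\,2^{-n}$, whence $\sum_{k=1}^{2^n-2} P(A_{n,k}) \leq C'\, n\, 2^{-n(m_0(1-2\gamma)-1)}$. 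Combined with the boundary estimate, $\sum_n \sum_k P(A_{n,k}) < \infty$.

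By the Borel--Cantelli lemma, almost surely only finitely many $A_{n,k}$ occur, so there is a random $N$ with $\max_{0 \le k \le 2^n-1}\lvert H^{g^- \to g^+}(t_{k+1}^{(n)}) - H^{g^- \to g^+}(t_k^{(n)})\rvert \le 2^{-n\gamma}$ for all $n \geq N$. A standard chaining argument then bounds $\lvert H^{g^- \to g^+}(t) - H^{g^- \to g^+}(s)\rvert \le C_\gamma \lvert t-s\rvert^{\gamma}$ for all dyadic $s,t$ with $\lvert t-s\rvert \le 2^{-N}$, and since $H^{g^- \to g^+}$ has continuous paths this extends to all $s,t \in [0,1]$ with $\lvert t-s\rvert \le 2^{-N}$. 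Taking $n$ in the union with $1/n \le 2^{-N}$ makes the corresponding Hölder ratio finite on the full Borel--Cantelli event, which establishes local Hölder continuity with exponent $\gamma$; since $\gamma \in (0,\frac{1}{2})$ is arbitrary, the proposition follows.

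I expect the main obstacle to be the endpoint weight $\frac{1}{s(1-t)}$ in \eqref{moment3}, which diverges as $s \downarrow 0$ or $t \uparrow 1$ and so obstructs a direct application of the classical Kolmogorov criterion uniformly on $[0,1]$. The resolution is twofold: taking $m_0$ large lets the geometric decay $2^{-n(m_0(1-2\gamma)-1)}$ absorb the merely logarithmic (hence linear-in-$n$) growth produced by summing $1/(k(2^n-k-1))$ over the interior dyadic intervals; and the two dyadic intervals abutting the deterministic endpoints $0$ and $b$ are handled separately through the one-point estimates \eqref{moment1}--\eqref{moment2}, bypassing the singular weight altogether.
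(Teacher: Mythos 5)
Your proposal is correct and follows essentially the same route as the paper's proof: dyadic-increment events estimated by Chebyshev's inequality through Lemma \ref{moment} (the two increments abutting the deterministic endpoints via \eqref{moment1}--\eqref{moment2}, the interior ones via \eqref{moment3}), then Borel--Cantelli and the standard Karatzas--Shreve chaining argument; note that, exactly like the paper, you implicitly read \eqref{moment2} with the singular exponent on $r$ rather than on $1-r$ (i.e.\ a bound of order $r^{m_0-1}$ for $E[\lvert H^{g^- \to g^+}(1-r)-b\rvert^{2m_0}]$), since the inequality as literally printed would give a useless factor of order $2^{n}$ at $r=2^{-n}$. The only other difference is bookkeeping: you keep the weight $\frac{1}{k(2^n-k-1)}$ and sum it exactly (a harmonic sum of order $n\,2^{-n}$), so that $m_0>\frac{1}{1-2\gamma}$ suffices, whereas the paper bounds $\frac{1}{s(1-t)}\le 2^{2n}$ uniformly over interior indices and therefore requires $m_0>\frac{3}{1-2\gamma}$ --- an immaterial difference, since $m_0$ is a free parameter in both arguments.
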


\begin{proof}
    The proof is similar to that in Chapter 2, Theorem 2.8 in \cite{karatzas_shreve}.
    We set
    \begin{equation}\nonumber
        F_n := \left\{ \max_{1 \leq k \leq 2^n} \left\lvert H^{g^- \to g^+} \left( \frac{k-1}{2^n} \right) - H^{g^- \to g^+} \left( \frac{k}{2^n} \right) \right\rvert \geq 2^{-n\gamma} \right\} ,
    \end{equation}
    \begin{equation}\nonumber
        a(n,k) := P\left( \left\lvert H^{g^- \to g^+} \left( \frac{k-1}{2^n} \right) - H^{g^- \to g^+} \left( \frac{k}{2^n} \right) \right\rvert \geq 2^{-n\gamma} \right)
    \end{equation}
    for $n \in \mathbb{N}$, $1 \leq k \leq 2^n$.
    Then, Chebyshev's inequality yields
    \begin{equation}\nonumber
        \begin{aligned}
            a(n,1) &\leq 2^{2nm_0 \gamma} E\left[ \left\lvert H^{g^- \to g^+}\left( \frac{1}{2^n} \right) \right\rvert^{2m_0} \right] \\
            &\leq C_{m_0,g^-,g^+} 2^{-n(m_0 - 2 - 2m_0 \gamma)} ,
        \end{aligned}
    \end{equation}
    \begin{equation}\nonumber
        \begin{aligned}
            a(n,2^n) &\leq 2^{2nm_0 \gamma} E\left[ \left\lvert H^{g^- \to g^+}\left( 1 - \frac{1}{2^n} \right) - b \right\rvert^{2m_0} \right] \\
            &\leq C_{m_0,g^-,g^+} 2^{-n(m_0 - 2 - 2m_0 \gamma)} ,
        \end{aligned}
    \end{equation}
    and, for $2 \leq k \leq 2^n - 1$,
    \begin{equation}\nonumber
        \begin{aligned}
            a(n,k) &\leq 2^{2nm_0 \gamma} E\left[ \left\lvert H^{g^- \to g^+}\left( \frac{k-1}{2^n} \right) - H^{g^- \to g^+}\left( \frac{k}{2^n} \right) \right\rvert^{2m_0} \right] \\
            &\leq C_{m_0,g^-,g^+} 2^{-n(m_0 - 2 - 2m_0 \gamma)} .
        \end{aligned}
    \end{equation}
    Therefore,
    \begin{equation}\nonumber
        \begin{aligned}
            P(F_n) &\leq \sum_{k=1}^{2^n} a(n,k) \\
            &\leq C_{m_0,g^-,g^+} 2^{-n(m_0 - 3 - 2 m_0 \gamma)} .
        \end{aligned}
    \end{equation}
    Here, we can find $m_0 \in \mathbb{N}$ such that
    \begin{equation}\nonumber
        m_0 > \frac{3}{1 - 2 \gamma}
    \end{equation}
    holds, then we have
    \begin{equation}\nonumber
        \sum_{n=1}^{\infty} P(F_n) < \infty .
    \end{equation}
    Hence, we get
    \begin{equation}\nonumber
        P\left( \liminf_{n \to \infty} F_n^{c} \right) = 1
    \end{equation}
    by the first Borel--Cantelli lemma.
    If $w \in \liminf_{n \to \infty} F_n^c$, then there exists $n^*(w) \in \mathbb{N}$ such that $w \in \bigcap_{n \geq n^*(w)} F_n^c$.
    For $n \geq n^*(w)$, we can deduce that
    \begin{equation}\nonumber
        \left\lvert H^{g^- \to g^+}(t) - H^{g^- \to g^+}(s) \right\rvert \leq 2 \sum_{j=n+1}^{\infty} 2^{-\gamma j} = \frac{2}{1 - 2^{-\gamma}} 2^{-(n+1)\gamma} , ~ 0 < t-s < 2^{-n} .
    \end{equation}
    Now, let $t,s \in [0,1]$ satisfy $0 < t - s < 2^{-n^*(w)}$ and choose $n \geq n^*(w)$ so that $2^{-(n+1)} \leq t - s < 2^{-n}$.
    Then, the above inequality yields
    \begin{equation}\nonumber
        \left\lvert H^{g^- \to g^+}(t) - H^{g^- \to g^+}(s) \right\rvert \leq \frac{2}{1 - 2^{-\gamma}} \lvert t - s \rvert^{\gamma} .
    \end{equation}
    Hence, $H^{g^- \to g^+}$ is locally H\"older-continuous with exponent $\gamma$ for $w \in \liminf_{n \to \infty} F_n^c$. 
\end{proof}

Since the diffusion house-moving $H_{\mu}^{g^- \to g^+}$ is absolutely continuous with respect to the Brownian house-moving $H^{g^- \to g^+}$, we can deduce that Proposition \ref{hoelder} holds from Proposition \ref{hoelder2}.

\newpage
\begin{flushleft}
\mbox{  }\\
\hspace{95mm} Kensuke Ishitani\\
\hspace{95mm} Department of Mathematical Sciences\\
\hspace{95mm} Tokyo Metropolitan University\\
\hspace{95mm} Hachioji, Tokyo 192-0397\\
\hspace{95mm} Japan\\
\hspace{95mm} e-mail: k-ishitani@tmu.ac.jp\\
\mbox{  }\\
\hspace{95mm} Soma Nishino\\
\hspace{95mm} Department of Mathematical Sciences\\
\hspace{95mm} Tokyo Metropolitan University\\
\hspace{95mm} Hachioji, Tokyo 192-0397\\
\hspace{95mm} Japan\\
\hspace{95mm} e-mail: nishino-soma@ed.tmu.ac.jp\\
\end{flushleft}

\end{document}